\newtheorem{theorem}{Theorem}[section]
\newtheorem{corollary}[theorem]{Corollary}
\newtheorem{lemma}[theorem]{Lemma}
\theoremstyle{definition}
\newtheorem{remark}[theorem]{Remark}
\numberwithin{equation}{section}
\title[]
      {Choquard equations with critical nonlinearities}
\author[Xinfu Li, Shiwang Ma]{}
 \keywords{critical Choquard equation; positive solution; Poho\v{z}aev
identity; groundstate solution; Brezis-Nirenberg type problem}
\thanks{*Corresponding author\\
Email Address:  lxylxf@tjcu.edu.cn (XL); shiwangm@nankai.edu.cn
(SM)}
\begin{document}
\maketitle

\centerline{\scshape Xinfu Li$^\mathrm{a}$ and  Shiwang
Ma$^\mathrm{b*}$}
\medskip
{\footnotesize
 \centerline{$^\mathrm{a}$School of Science, Tianjin University of Commerce, Tianjin 300134,
 China} \centerline{$^\mathrm{b}$School of Mathematical
Sciences and LPMC, Nankai University, Tianjin 300071, China} }

\bigskip

\begin{abstract}
In this paper, we study the Brezis-Nirenberg type problem for
Choquard equations in $\mathbb{R}^N$
\begin{equation*}
-\Delta u+u=(I_{\alpha}\ast|u|^{p})|u|^{p-2}u+\lambda|u|^{q-2}u
\quad \mathrm{in}\  \mathbb{R}^N,
\end{equation*}
where $N\geq 3,\ \alpha\in(0,N)$, $\lambda>0$, $q\in
(2,\frac{2N}{N-2}]$, $p=\frac{N+\alpha}{N}$ or
$\frac{N+\alpha}{N-2}$ are the critical exponents in the sense of
Hardy-Littlewood-Sobolev inequality and $I_\alpha$ is the Riesz
potential. Based on the results of the subcritical problems, and by
using the subcritical approximation and the Poho\v{z}aev constraint
method, we obtain a positive and radially nonincreasing groundstate
solution in $H^1(\mathbb{R}^N)$ for the problem. To the end, the
regularity and the Poho\v{z}aev identity of solutions
to a general Choquard equation are obtained.\\
\textbf{Mathematics Subject Classification}. Primary 35J20;
Secondary 35B65, 35B33.
\end{abstract}

\section{Introduction and main results}

\setcounter{section}{1}
\setcounter{equation}{0}

In this paper, we study the following Choquard equation with
critical nonlinearities
\begin{equation}\label{e1.2222}
-\Delta u+u=(I_{\alpha}\ast|u|^{p})|u|^{p-2}u+\lambda|u|^{q-2}u
\quad \mathrm{in}\  \mathbb{R}^N,
\end{equation}
where $N\geq 3,\ \alpha\in(0,N)$, $p=\frac{N+\alpha}{N}$ or
$\frac{N+\alpha}{N-2}$, $q\in (2,\frac{2N}{N-2})$,  $\lambda>0$ and
$I_{\alpha}$ is the Riesz potential defined for every $x\in
\mathbb{R}^N \setminus \{0\}$ by
\begin{equation}\label{e1.37}
I_{\alpha}(x)=\frac{A_\alpha(N)}{|x|^{N-\alpha}}
\end{equation}
with
$A_\alpha(N)=\frac{\Gamma(\frac{N-\alpha}{2})}{\Gamma(\frac{\alpha}{2})\pi^{N/2}2^\alpha}$
and $\Gamma$ denoting the Gamma function (see \cite{Riesz1949AM},
P.19).

Problem (\ref{e1.2222}) is referred as the Brezis-Nirenberg type
problem for Choquard equations in $\mathbb{R}^N$. In the pioneering
work of Brezis and Nirenberg \cite{Brezis-Nirenberg 1983}, authors
studied the critical problem
\begin{equation}\label{e1.1111}
-\Delta u=|u|^{2^*-2}u+\lambda u \quad \mathrm{in}\ \Omega,\quad
u=0\  \mathrm{on}\ \partial\Omega,
\end{equation}
where $2^*=\frac{2N}{N-2}$ is the critical Sobolev exponent for the
embedding of $H^1_0(\Omega)$ into $L^p(\Omega)$. They proved the
existence of solutions for $\lambda>0,\ N>4$ by analyzing the local
Palais-Smale sequences below the first critical level. Since then,
there has been a considerable number of papers on problem
(\ref{e1.1111}) for the existence of positive solutions and
sign-changing solutions (see \cite{Struwe 2008} and \cite{Willem
1996}). In \cite{Brezis-Nirenberg 1983}, they also considered the
problem
\begin{equation}\label{e1.1112}
-\Delta u=|u|^{2^*-2}u+\lambda |u|^{q-2}u \quad \mathrm{in}\
\Omega,\quad u=0\  \mathrm{on}\ \partial\Omega,
\end{equation}
where $2<q<2^*$. When $N\geq 4$, they obtained that problem
(\ref{e1.1112}) has a positive solution for every $\lambda > 0$.
When $N = 3$, problem (\ref{e1.1112}) is  much more delicate: if $4
< q < 6$, problem (\ref{e1.1112}) has a positive solution for every
$\lambda > 0$; if $2 < q \leq 4$, it is only for large values of
$\lambda$ that (\ref{e1.1112}) possesses a positive solution. 

For
the Schr\"{o}dinger equation in $\mathbb{R}^N$
\begin{equation*}
-\Delta u+u=|u|^{p-2}u  \quad \mathrm{in}\ \mathbb{R}^N,\quad u\in
H^1(\mathbb{R}^N).
\end{equation*}
If $p\in (2, 2^*)$, there exists an unique positive groundstate
solution, which is radially symmetric and radially nonincreasing. If
$p\geq 2^*$, there are no nontrivial solutions. See \cite{Struwe
2008} and \cite{Willem 1996}. For the  Brezis-Nirenberg type problem
for the Schr\"{o}dinger equation in $\mathbb{R}^N$
\begin{equation}\label{e1.1114}
-\Delta u+u=|u|^{2^*-2}u+\lambda |u|^{q-2}u  \quad \mathrm{in}\
\mathbb{R}^N,\quad u\in H^1(\mathbb{R}^N),
\end{equation}
where $N\geq 3$, $2<q<2^*$ and $\lambda>0$ is a constant. The
authors in \cite{Alves-Souto-Montenegro 2012} \cite{Liu-Liao-Tang
2017} \cite{Zhang-Zou 2012} \cite{Zhang-Zou 2014}  obtained that
(\ref{e1.1114}) admits a positive ground state solution which is
radially symmetric if $N\geq4, q\in(2,2^*)$ or $N=3, q\in(2,2^*)$
and $\lambda$ is large enough.

As for the Choquard equation, the Hardy-Littlewood-Sobolev
inequality implies that
$\int_{\mathbb{R}^N}(I_\alpha\ast|u|^p)|u|^p$ is well defined for
$u\in H^1(\mathbb{R}^N)$ if $p\in
[\frac{N+\alpha}{N},\frac{N+\alpha}{N-2}]$. In 2013, Moroz and
Schaftingen \cite{Moroz-Schaftingen JFA 2013} established the
existence, qualitative properties and decay estimates of
groundstates of
\begin{equation}\label{e1.1117}
-\Delta u+u=(I_{\alpha}\ast|u|^{p})|u|^{p-2}u \quad \mathrm{in}\
\mathbb{R}^N
\end{equation}
for  $p\in \left(\frac{N+\alpha}{N},\frac{N+\alpha}{N-2}\right)$.
However, they showed that the following critical problems
\begin{equation*}
-\Delta
u+u=(I_{\alpha}\ast|u|^{\frac{N+\alpha}{N}})|u|^{\frac{N+\alpha}{N}-2}u
\quad \mathrm{in}\ \mathbb{R}^N
\end{equation*}
and
\begin{equation*}
-\Delta
u+u=(I_{\alpha}\ast|u|^{\frac{N+\alpha}{N-2}})|u|^{\frac{N+\alpha}{N-2}-2}u
\quad \mathrm{in}\ \mathbb{R}^N
\end{equation*}
have no nontrivial solutions in $H^1(\mathbb{R}^N)$. Usually,
$\frac{N+\alpha}{N}$ is called the lower critical exponent and
$\frac{N+\alpha }{N-2}$ is the upper critical exponent for the
Choquard equation. We should point out that for $N=3,\  p=2$ and
$\alpha=2$, (\ref{e1.1117}) was investigated by Pekar in \cite{Pekar
1954} to study the quantum theory of a polaron at rest. In
\cite{Lieb 1977}, it was  applied by Choquard as an approximation to
Hartree-Fock theory of one component plasma. It also arises in
multiple particles systems \cite{Gross 1996} and quantum mechanics
\cite{Penrose 1996}.

Recently, Ao \cite{Ao 2016} considered the upper critical problem
\begin{equation}\label{e1.11110}
-\Delta
u+u=(I_{\alpha}\ast|u|^{\frac{N+\alpha}{N-2}})|u|^{\frac{N+\alpha}{N-2}-2}u+|u|^{q-2}u
\quad \mathrm{in}\  \mathbb{R}^N
\end{equation}
on the space $H^1_r(\mathbb{R}^N)$. By using the Nehari manifold
method, he obtained the following result.\\
\textbf{Theorem A1.} {\it Let $\alpha\in (0,N)$, $q\in
(2,\frac{2N}{N-2})$ for $N\geq 5$ and $q\in (3,4)$ for $N=4$, then
(\ref{e1.11110})  admits a nontrivial solution in
$H_r^1(\mathbb{R}^N)$}.

We ramark that the existence of nontrivial solutions of (\ref{e1.11110}) in the cases $N=4, q\in(2,3]$ and $N=3, q\in (2,6)$ is  still an open
problem.

Van Schaftingen and  Xia \cite{Schaftingen-Xia 2017} considered the
more general lower critical problem
\begin{equation}\label{e1.56}
-\Delta
u+u=(I_{\alpha}\ast|u|^{\frac{N+\alpha}{N}})|u|^{\frac{N+\alpha}{N}-2}u+f(u)
\quad \mathrm{in}\  \mathbb{R}^N.
\end{equation}
By using the mountain-pass lemma, the Brezis-Lieb lemma and the
concentration compactness principle, they obtained the following
result.\\
\textbf{Theorem A2.} {\it  For every $N\geq 1$ and $\alpha\in (0,N)$,
there exists $\Lambda_0>0$ such that if the function $f\in
C(\mathbb{R},\mathbb{R})$ satisfies

($f_1$) $f(t)=o(t)$ as $t\to 0$,

($f_2$) $|f(t)|\leq a (|t|+|t|^{q-1})$ for some $a>0$ and $q>2$ with
$\frac{1}{q}>\frac{1}{2}-\frac{1}{N}$,

($f_3$) there exists $\mu>2$ such that $0<\mu F(t)\leq tf(t)$ for
all $t\neq0$, where $F(t)=\int_{0}^{t}f(s)ds$,

($f_4$) $\liminf_{|t|\to 0}\frac{F(t)}{t^{4/N+2}}\geq \Lambda_0$,\\
then (\ref{e1.56}) has a groundstate solution.}

Note that $f(t)=|t|^{q-2}t$  satisfies
$(f_1)$-$(f_4)$  whenever  $q\in (2,2+\frac{4}{N}]$, but does not satisfies $(f_4)$ for $q\in
(2+\frac{4}{N},\frac{2N}{N-2})$. So they left the case $q\in
(2+\frac{4}{N},\frac{2N}{N-2})$ as open problem.

\smallskip

In this paper, we will solve the above open problems and obtain
positive groundstate solutions for (\ref{e1.2222}). For
completeness, we also consider the following equations
\begin{equation}\label{e1.222}
-\Delta
u+u=\mu(I_{\alpha}\ast|u|^{p})|u|^{p-2}u+|u|^{\frac{2N}{N-2}-2}u
\quad \mathrm{in}\  \mathbb{R}^N,
\end{equation}
and
\begin{equation}\label{e1.22222}
-\Delta
u+u=\mu(I_{\alpha}\ast|u|^{\frac{N+\alpha}{N}})|u|^{\frac{N+\alpha}{N}-2}u+\lambda|u|^{\frac{2N}{N-2}-2}u
\quad \mathrm{in}\  \mathbb{R}^N,
\end{equation}
where $N\geq 3,\ \alpha\in(0,N)$, $p\in
[\frac{N+\alpha}{N},\frac{N+\alpha}{N-2}]$,  $\mu$ and $\lambda$ are
positive constants.

The main results of this paper are as follows.

\begin{theorem}\label{thm1.3}
{\it Let $N\geq 3$,\ $\alpha\in(0,N)$, $p=\frac{N+\alpha}{N-2}$ and
$\lambda>0$. Then there is a constant $\lambda_0>0$ such that (\ref{e1.2222}) admits a positive groundstate
$u\in H^1(\mathbb{R}^N)$ which is radially symmetric and radially
nonincreasing if one of the following conditions holds:

(1) $N\geq 4$ and $q\in (2,\frac{2N}{N-2})$;

(2) $N=3$ and  $q\in (4,\frac{2N}{N-2})$;

(3) $N=3$,  $q\in (2,4]$ and $\lambda>\lambda_0$.}
\end{theorem}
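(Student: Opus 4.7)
My plan is to follow the subcritical approximation strategy outlined in the abstract. I approximate the upper critical exponent $p_\ast = (N+\alpha)/(N-2)$ from below by $p_\epsilon \nearrow p_\ast$, staying inside $(\tfrac{N+\alpha}{N}, p_\ast)$ where the companion subcritical results (stated earlier in the paper) provide positive, radially symmetric and radially nonincreasing groundstates $u_\epsilon \in H^1(\mathbb{R}^N)$ of
\begin{equation*}
-\Delta u + u = (I_\alpha \ast |u|^{p_\epsilon})|u|^{p_\epsilon-2}u + \lambda |u|^{q-2}u \quad \text{in } \mathbb{R}^N,
\end{equation*}
each lying on its Pohozaev manifold $\mathcal{P}_\epsilon$. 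These $u_\epsilon$ will serve as building blocks for the limiting solution of \eqref{e1.2222}.

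The first step is to secure a uniform $H^1$ bound on $\{u_\epsilon\}$ by combining the Pohozaev identity of each approximating equation with a uniform upper bound on the groundstate energy $c_\epsilon := I_\epsilon(u_\epsilon)$. The crucial ingredient is the strict inequality
\begin{equation*}
\limsup_{\epsilon\to 0} c_\epsilon < c^\ast := \frac{\alpha+2}{2(N+\alpha)}\, S_{HL}^{(N+\alpha)/(\alpha+2)},
\end{equation*}
where $S_{HL}$ denotes the best constant in the critical Hardy-Littlewood-Sobolev inequality for the upper critical Choquard nonlinearity. I would obtain this by evaluating $I_\epsilon$ on a family of truncated HLS extremal bubbles concentrated at a point: the critical Choquard term balances $\|\nabla u\|_2^2$ to yield the threshold $c^\ast$, while the lower-order contributions $\|u\|_2^2$ and $(\lambda/q)\|u\|_q^q$ produce a negative correction of the appropriate order. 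This is precisely where the case distinctions enter: for $N\ge 4$ the correction dominates for every $q\in(2,2^\ast)$; for $N=3$ it is of lower order than required unless $q>4$ (case (2)) or, for $q\in(2,4]$, $\lambda$ is large enough to tip the leading coefficient (case (3)), in complete analogy with the Brezis-Nirenberg trichotomy.

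Once $\{u_\epsilon\}$ is bounded in $H^1$, radial compactness yields strong convergence in $L^s(\mathbb{R}^N)$ for $s\in(2,2^\ast)$, and Hardy-Littlewood-Sobolev estimates allow passage to the limit in the nonlocal term, producing a weak solution $u_0\in H^1(\mathbb{R}^N)$ of \eqref{e1.2222}. Nontriviality of $u_0$ follows from the strict bound on $c_\epsilon$: either vanishing or a pure HLS-bubble concentration of $u_\epsilon$ would force $c_\epsilon \to 0$ or $c_\epsilon \to c^\ast$ respectively, contradicting the uniform positive lower bound on mountain-pass levels or the strict upper bound. Radial symmetry and monotonicity pass to $u_0$, and the strong maximum principle (applied after invoking the regularity result for general Choquard equations announced in the abstract) delivers strict positivity. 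To identify $u_0$ as a groundstate I would work with the Pohozaev manifold $\mathcal{P} = \{u\in H^1\setminus\{0\}: P(u)=0\}$ for the limit equation, show by a scaling argument that $\inf_{\mathcal{P}} I$ coincides with the mountain-pass level of the limit functional $I$, and combine weak lower semicontinuity of $I$ with $P_\epsilon(u_\epsilon)=0$ to conclude $I(u_0)\le \liminf c_\epsilon \le \inf_\mathcal{P} I$, hence equality. The main obstacle throughout is the strict energy estimate $\limsup c_\epsilon < c^\ast$ in cases (2) and (3): it requires a delicate asymptotic expansion of both the Dirichlet term and the nonlocal Choquard term on truncated HLS bubbles, and in case (3) it is the largeness of $\lambda$ that intervenes precisely to flip the sign of the leading correction.
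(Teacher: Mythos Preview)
Your proposal is correct and follows essentially the same route as the paper: subcritical approximation $p_\epsilon \nearrow \bar p=\frac{N+\alpha}{N-2}$, uniform $H^1$ bounds via the Poho\v{z}aev constraint together with the upper-semicontinuity $\limsup c_{p_\epsilon,q}\le c_{\bar p,q}$, the strict energy estimate $c_{\bar p,q}<\frac{2+\alpha}{2(N+\alpha)}S_\alpha^{(N+\alpha)/(2+\alpha)}$ obtained from truncated Talenti bubbles (with the Brezis--Nirenberg case split exactly as you describe), and nontriviality of the weak limit by the dichotomy you sketch. One small correction: it is not $I$ itself but $I-\tfrac{1}{N}P$ (which equals $\tfrac1N\|\nabla u\|_2^2+\tfrac{\alpha}{2N\bar p}\int(I_\alpha\ast|u|^{\bar p})|u|^{\bar p}$ on the Poho\v{z}aev manifold) that is weakly lower semicontinuous, and this is what the paper uses via Fatou's lemma to conclude $J_{\bar p,q}(u_0)=c_{\bar p,q}$.
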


\smallskip

\textbf{Remark 1.} It is obvious that Theorem \ref{thm1.3} is a sharp
improvement of the results in \cite{Ao 2016}.

\begin{theorem}\label{thm1.4}
Let $N\geq 3$,\ $\alpha\in(0,N)$, $p=\frac{N+\alpha}{N}$ and
$\lambda>0$. Then there is a constant $\lambda_1>0$ such that (\ref{e1.2222}) admits a positive groundstate
$u\in H^1(\mathbb{R}^N)$ which is radially symmetric and radially
nonincreasing if one of the following conditions holds:

(1) $q\in(2,2+\frac{4}{N})$;

(2) $q\in[2+\frac{4}{N},\frac{2N}{N-2})$ and $\lambda>\lambda_1$.
\end{theorem}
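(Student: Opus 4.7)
Following the abstract's subcritical-approximation plus Pohozaev-constraint strategy, fix $q\in(2,\frac{2N}{N-2})$ and $\lambda>0$. For $\varepsilon>0$ small put $p_\varepsilon=\frac{N+\alpha}{N}+\varepsilon\in(\frac{N+\alpha}{N},\frac{N+\alpha}{N-2})$ and look at the auxiliary strictly subcritical equation
\begin{equation*}
-\Delta u+u=(I_{\alpha}*|u|^{p_\varepsilon})|u|^{p_\varepsilon-2}u+\lambda|u|^{q-2}u, \quad u\in H^1(\mathbb{R}^N).
\end{equation*}
The subcritical theory developed earlier in the paper (in the spirit of \cite{Moroz-Schaftingen JFA 2013}) provides, for every such $\varepsilon$, a positive, radially symmetric, radially nonincreasing groundstate $u_\varepsilon$ sitting on the corresponding Pohozaev manifold with energy $c_\varepsilon=J_\varepsilon(u_\varepsilon)$.

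I would then combine the three scalar identities $J_\varepsilon(u_\varepsilon)=c_\varepsilon$, $\langle J_\varepsilon'(u_\varepsilon),u_\varepsilon\rangle=0$ and the Pohozaev identity (available thanks to the regularity/Pohozaev results in the final section of the paper) to eliminate the two nonlinear integrals and deduce a uniform $H^1$-bound on $u_\varepsilon$ from any uniform upper bound on $c_\varepsilon$. To get that upper bound, one tests $J_\varepsilon$ against $t\phi_\sigma$ where $\phi_\sigma$ is an $L^2$-normalised rescaling of a fixed radial profile, projects onto the Pohozaev manifold, and optimises in $t$ and $\sigma$. When $q\in(2,2+\tfrac{4}{N})$, the $L^q$ perturbation dominates the lower-critical Choquard term at small concentration scale, yielding $\limsup_{\varepsilon\to 0^+}c_\varepsilon<c^{*}$ for every $\lambda>0$, where $c^{*}$ is the compactness obstruction intrinsic to the pure lower-critical Choquard equation (a power of the Hardy--Littlewood--Sobolev best constant used in the Moroz--Schaftingen non-existence argument). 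When $q\in[2+\tfrac{4}{N},\tfrac{2N}{N-2})$ the two terms are of comparable or smaller order, and the strict inequality holds only after taking $\lambda$ large enough; $\lambda_1$ is the infimum of $\lambda$ for which the optimised test-function energy falls below $c^{*}$.

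With the uniform bound and the strict inequality in hand, the compact radial embedding $H^1_r(\mathbb{R}^N)\hookrightarrow L^s(\mathbb{R}^N)$ for $s\in(2,2^{*})$ gives $u_\varepsilon\rightharpoonup u$ in $H^1$ with strong $L^s$-convergence along a subsequence. Passing to the limit in the weak formulation identifies $u$ as a weak $H^1$-solution of (\ref{e1.2222}). A Brezis--Lieb type splitting of the Choquard energy together with $c_\varepsilon<c^{*}$ rules out the vanishing alternative $u\equiv 0$, since a vanishing profile would carry at least $c^{*}$ of energy. The strong maximum principle delivers positivity; a.e.\ convergence transfers the radial monotonicity from $u_\varepsilon$ to $u$. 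Finally, weak lower semicontinuity of the limit energy on the limit Pohozaev manifold, combined with $J(u)\leq\liminf c_\varepsilon$ and any admissible comparison, shows $u$ is a groundstate.

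\textbf{Main obstacle.} The hardest part is the test-function analysis producing $\limsup_{\varepsilon\to 0}c_\varepsilon<c^{*}$ in the delicate range $q\in[2+\tfrac{4}{N},\tfrac{2N}{N-2})$: the HLS extremisers $(1+|x|^2)^{-N/2}$ are not in $H^1(\mathbb{R}^N)$, so there is no Aubin--Talenti bubble available, and one must work instead with rescaled subcritical groundstates or truncated extremisers. The interplay between the $L^2$-scale of the lower-critical Choquard term and the $L^q$-scale of the subcritical perturbation is precisely what forces the threshold $\lambda_1$ and resolves the open case left in \cite{Schaftingen-Xia 2017}.
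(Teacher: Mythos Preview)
Your overall plan coincides with the paper's: fix $q$, approximate $\underline p=\frac{N+\alpha}{N}$ by $p_n\downarrow\underline p$, use the subcritical groundstates $u_n$ on the Poho\v{z}aev manifold, obtain a uniform $H^1$ bound from the identities $J_{p_n,q}(u_n)=c_{p_n,q}$, $\langle J_{p_n,q}'(u_n),u_n\rangle=0$, $P_{p_n,q}(u_n)=0$, pass to a weak limit via the compact radial embedding, and exclude $u\equiv0$ by comparing $\limsup c_{p_n,q}$ with the threshold
\[
c^{*}=\frac{\alpha}{2(N+\alpha)}S_1^{\frac{N+\alpha}{\alpha}},
\qquad
S_1=\inf_{u\in H^1\setminus\{0\}}\frac{\|u\|_2^2}{\bigl(\int(I_\alpha*|u|^{\underline p})|u|^{\underline p}\bigr)^{1/\underline p}}.
\]

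There is, however, a concrete misconception in your ``main obstacle''. The HLS extremiser $V(x)=A(1+|x|^2)^{-N/2}$ \emph{is} in $H^1(\mathbb{R}^N)$ for every $N\ge1$: both $|V|^2\sim|x|^{-2N}$ and $|\nabla V|^2\sim|x|^{-2N-2}$ are integrable at infinity. The paper exploits this directly (Lemma~3.11): the test function is simply the $L^2$-preserving rescaling $v_\delta(x)=\delta^{N/2}V(\delta x)$, with no truncation and no subcritical surrogate. Under this scaling the Choquard term is invariant, $\int|\nabla v_\delta|^2=\delta^{2}\int|\nabla V|^2$, and $\int|v_\delta|^q=\delta^{Nq/2-N}\int|V|^q$. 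Thus the relevant competition as $\delta\to0$ is between the gradient term (order $\delta^2$) and the $L^q$ term (order $\delta^{Nq/2-N}$), not between the $L^q$ term and the Choquard term as you wrote. The threshold $q=2+\tfrac4N$ is exactly where $Nq/2-N=2$, which explains why the strict inequality $c_{\underline p,q}<c^*$ holds for all $\lambda>0$ when $q<2+\tfrac4N$, and only for $\lambda$ large when $q\ge2+\tfrac4N$. Once you use $V$ itself, the test-function computation is elementary and the ``obstacle'' disappears.
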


\smallskip

\textbf{Remark 2.} The result in Theorem \ref{thm1.4} is new for
$q\in(2+\frac{4}{N},\frac{2N}{N-2})$. When $q\in(2,2+\frac{4}{N}]$,
it is a special case of the results in  \cite{Schaftingen-Xia
2017}.

\begin{theorem}\label{thm1.5}
Let $N\geq 3$, $\alpha\in(0,N)$ and $\mu>0$. Then there are two  constants $\mu_0, \mu_1>0$ such that(\ref{e1.222})
admits a positive groundstate  $u\in H^1(\mathbb{R}^N)$ which is
radially symmetric and radially nonincreasing if one of the
following conditions holds:

(1) $N\geq 4$ and $p\in
(1+\frac{\alpha}{N-2},\frac{N+\alpha}{N-2})$;

(2) $N\geq 4$, $p\in (\frac{N+\alpha}{N},1+\frac{\alpha}{N-2}]$ and
$\mu>\mu_0$;

(3) $N=3$ and $p\in (2+\alpha,\frac{N+\alpha}{N-2})$;

(4) $N=3$, $p\in (\frac{N+\alpha}{N},2+\alpha]$ and $\mu>\mu_1$.
\end{theorem}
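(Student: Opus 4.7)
\textbf{Proof plan for Theorem \ref{thm1.5}.} The strategy is the subcritical approximation combined with the Poho\v{z}aev constraint method announced in the abstract. For each $s\in(2,2^{*})$, where $2^{*}=2N/(N-2)$, consider the auxiliary subcritical problem
\begin{equation*}
-\Delta u+u=\mu(I_{\alpha}\ast|u|^{p})|u|^{p-2}u+|u|^{s-2}u\quad\text{in }\mathbb{R}^{N}.
\end{equation*}
In every case of the statement, $p$ lies strictly between the two Choquard critical exponents, so the Moroz--Schaftingen theory of subcritical Choquard equations, together with Schwarz symmetrization, yields a positive, radially symmetric and radially nonincreasing groundstate $u_{s}\in H^{1}(\mathbb{R}^{N})$ at a mountain-pass level $c_{s}$ coinciding with the infimum of the energy on the associated Poho\v{z}aev manifold.

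The decisive step is to show $\limsup_{s\uparrow 2^{*}}c_{s}<\tfrac{1}{N}S^{N/2}$, where $S$ is the best Sobolev constant. Plugging the rescaled Talenti instanton $U_{\epsilon}(x)=C_{N}\epsilon^{(N-2)/2}/(\epsilon^{2}+|x|^{2})^{(N-2)/2}$ into the functional and maximizing in $t>0$ gives the familiar Brezis--Nirenberg expansion: namely $\tfrac{1}{N}S^{N/2}$ plus an unfavorable contribution proportional to $\|U_{\epsilon}\|_{2}^{2}$, minus a favorable Choquard contribution proportional to $\mu\int(I_{\alpha}\ast|U_{\epsilon}|^{p})|U_{\epsilon}|^{p}$. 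A direct scaling gives $\int(I_{\alpha}\ast|U_{\epsilon}|^{p})|U_{\epsilon}|^{p}\asymp\epsilon^{N+\alpha-p(N-2)}$, while $\|U_{\epsilon}\|_{2}^{2}$ is of order $\epsilon^{2}$ for $N\geq 5$, $\epsilon^{2}|\log\epsilon|$ for $N=4$, and $\epsilon$ for $N=3$. For every $\mu>0$ the Choquard gain then dominates as $\epsilon\to 0$ precisely when $N+\alpha-p(N-2)<2$ for $N\geq 4$, respectively $<1$ for $N=3$---i.e.\ when $p>1+\tfrac{\alpha}{N-2}$ or $p>2+\alpha$---which are cases (1) and (3). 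In cases (2) and (4) the Choquard $\epsilon$-exponent exceeds the $L^{2}$ one, so no leading-order gain is available; we fix $\epsilon$ at an optimal value and absorb the unfavorable term by requiring $\mu>\mu_{0}$ or $\mu>\mu_{1}$.

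Once $c_{s}<\tfrac{1}{N}S^{N/2}$ is known uniformly for $s$ close to $2^{*}$, the Poho\v{z}aev identity for the subcritical problem delivers a uniform $H^{1}$ bound on $(u_{s})$. Extracting a subsequence, $u_{s}\rightharpoonup u$ weakly in $H^{1}(\mathbb{R}^{N})$; radial monotonicity and the compactness of $H^{1}_{r}\hookrightarrow L^{q}$ for $q\in(2,2^{*})$, together with the Hardy--Littlewood--Sobolev inequality applied to the subcritical Choquard integrand, pass the nonlocal nonlinearity to the limit. The strict gap below the first critical level rules out bubbling of the Sobolev-critical term via a Brezis--Lieb/Struwe splitting, so $u\not\equiv 0$ and $u$ solves (\ref{e1.222}); nonnegativity, radial symmetry and monotonicity are inherited from $(u_{s})$, positivity follows from the regularity theorem in the abstract together with the strong maximum principle, and a Poho\v{z}aev-manifold comparison identifies $u$ as a groundstate.

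The principal obstacle is the fine Brezis--Nirenberg estimate in the second paragraph---namely tracking cross terms and remainders so that the strict inequality holds uniformly as $s\uparrow 2^{*}$, particularly in the borderline dimensions $N=3,4$, and making precise the dependence of $\mu_{0},\mu_{1}$ on the chosen instanton $U_{\epsilon_{0}}$ in cases (2) and (4). A secondary difficulty is ruling out vanishing of the weak limit $u$ given only $H^{1}$-boundedness and not strong convergence; here the energy gap below the concentration threshold prevents bubbling, while radial symmetry precludes spatial splitting.
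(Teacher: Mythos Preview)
Your proposal is correct and follows essentially the same approach as the paper: subcritical approximation in the local exponent $q\uparrow 2^{*}$, the Brezis--Nirenberg estimate with a (truncated) Talenti instanton yielding the threshold $\tfrac{1}{N}S^{N/2}$ and the case split on $p$ via the exponent comparison $N+\alpha-p(N-2)$ versus the $L^{2}$-mass exponent, Poho\v{z}aev-based $H^{1}$ bounds, and non-vanishing of the weak limit from the strict energy gap. The only cosmetic differences are that the paper maximizes along the Poho\v{z}aev dilation $u_{\tau}(x)=u(x/\tau)$ rather than the Nehari ray, and rules out $u\equiv 0$ by a direct Poho\v{z}aev-identity dichotomy (using that the subcritical Choquard term vanishes) rather than a Struwe-type splitting.
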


\smallskip

\textbf{Remark 3.} Recently,  by using perturbation arguments, Seok
\cite{Seok AML 2017} obtained that for fixed $p\in (1,N/(N-2))$ if
$N \geq 4$ and $p\in(2, 3)$ if $N = 3$, there exists $\alpha_0>0$
depending on $p$ such that (\ref{e1.222}) admits a radially
symmetric nontrivial solution $u_\alpha\in H^1(\mathbb{R}^N)$ for
every $\alpha\in (0,\alpha_0)$. Hence, Theorem \ref{thm1.5} is an
improvement of the results in \cite{Seok AML 2017}.

\textbf{Remark 4.}  As $\alpha\to 0$, (\ref{e1.222}) formally
reduces to
\begin{equation}\label{e1.64}
-\Delta u+u=|u|^{2p-2}u+|u|^{\frac{2N}{N-2}-2}u, \quad \mathrm{in}\
\mathbb{R}^N.
\end{equation}
It is proved in \cite{Alves 1996} that (\ref{e1.64}) admits a
positive least energy solution in $H_r^1(\mathbb{R}^N)$ if $p\in
(1,N/(N-2))$ for $N \geq 4$ or $p\in(2, 3)$ for $N = 3$. Thus,
Theorem \ref{thm1.5} may be viewed as  an generalization of the results in
\cite{Alves 1996} to Choquard equations. Moreover, we obtain a
groundstate in $H^1(\mathbb{R}^N)$.

\begin{theorem}\label{thm1.6}
Let $N\geq 3$ and $\alpha\in(0,N)$. Then there exist $\lambda_2,
\mu_2>0$ such that (\ref{e1.22222}) admits a positive groundstate
$u\in H^1(\mathbb{R}^N)$ which is radially symmetric and radially
nonincreasing if $\lambda>\lambda_2$ and $\mu>\mu_2$.
\end{theorem}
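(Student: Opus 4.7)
The plan is to combine the subcritical approximation scheme of Theorems \ref{thm1.3}--\ref{thm1.5} with the Poho\v{z}aev constraint method announced in the abstract. Since equation (\ref{e1.22222}) is doubly critical, carrying both the lower critical Choquard and the upper critical Sobolev exponents, neither earlier theorem applies directly. I would approximate the upper critical Sobolev exponent $2^{\ast}=2N/(N-2)$ from below by a sequence $q_n\uparrow 2^{\ast}$ with $q_n\in[2+4/N,2^{\ast})$, and consider the auxiliary equation
\begin{equation*}
-\Delta u+u=\mu(I_{\alpha}\ast|u|^{(N+\alpha)/N})|u|^{(N+\alpha)/N-2}u+\lambda|u|^{q_n-2}u\quad\mathrm{in}\ \mathbb{R}^N.
\end{equation*}
An extension of Theorem \ref{thm1.4}(2) that keeps track of the positive constant $\mu$ in front of the nonlocal term produces, for $\lambda$ larger than a $q_n$-dependent threshold, a positive, radially nonincreasing groundstate $u_n\in H^1(\mathbb{R}^N)$ satisfying its Poho\v{z}aev identity $P_n(u_n)=0$. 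Denote by $m_n$ the associated energy.

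Next I would establish uniform $H^1$-bounds. Combining $\langle I'_n(u_n),u_n\rangle=0$ with $P_n(u_n)=0$ produces a relation of the form
\begin{equation*}
N\|u_n\|_2^2+(N-2)\|\nabla u_n\|_2^2=C\,m_n+(\text{subcritical } L^{q_n}\text{ term}),
\end{equation*}
so once $\limsup_n m_n$ is finite and strictly below the critical Talenti threshold, the sequence is bounded in $H^1(\mathbb{R}^N)$. Up to a subsequence, $u_n\rightharpoonup u$ weakly in $H^1$ and a.e. Because each $u_n$ is radially nonincreasing, the Strauss decay estimate and the Hardy-Littlewood-Sobolev inequality transfer the Choquard term to the limit; the subcritical $L^{q_n}$-term converges because $q_n\to 2^{\ast}$ in a controlled way, with the help of the Brezis-Lieb decomposition. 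Passing to the weak formulation shows $u$ solves (\ref{e1.22222}).

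The crucial step is to prove $u\not\equiv 0$, and this is where $\lambda_2$ and $\mu_2$ enter. The minimax level $\limsup_n m_n$ must be shown to lie strictly below the Brezis-Nirenberg-type critical threshold
\begin{equation*}
c_{\ast}=\frac{1}{N}\left(\frac{S}{\lambda^{2/2^{\ast}}}\right)^{N/2},
\end{equation*}
where $S$ is the Sobolev constant. I would construct test functions on the Poho\v{z}aev manifold using the Aubin-Talenti instantons $U_{\epsilon}$ suitably cut off, and evaluate $\max_{t>0}I(t^{\gamma}U_{\epsilon}(t\cdot))$ with a scaling exponent $\gamma$ chosen so the dilation preserves the Poho\v{z}aev constraint. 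The lower critical Choquard contribution has favorable sign and is of small order as $\epsilon\to 0$; the choice $\lambda>\lambda_2$ forces the Sobolev contribution to push the maximum below $c_{\ast}$, while $\mu>\mu_2$ is used to prevent the nonlocal mass from concentrating at the lower critical Choquard scale studied in \cite{Moroz-Schaftingen JFA 2013}.

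The hard part is exactly this energy estimate. Because the problem is doubly critical, compactness can fail either through Sobolev-scale bubbling of Talenti type or through lower-critical Choquard mass leakage to infinity, and the two thresholds $\lambda_2$, $\mu_2$ are tuned precisely to rule out both phenomena simultaneously. Once nontriviality is secured, positivity of $u$ follows from the strong maximum principle combined with the regularity result announced in the abstract (each $u_n>0$, so $u\geq 0$, and $u\not\equiv 0$ together with regularity gives $u>0$); radial symmetry and nonincreasing monotonicity pass to the weak $H^1$-limit of radially nonincreasing functions; and the groundstate property follows from the Poho\v{z}aev-manifold characterization of the minimal energy combined with the Brezis-Lieb lemma applied to both critical terms.
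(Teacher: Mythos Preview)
Your overall strategy---subcritical approximation plus the Poho\v{z}aev constraint---matches the paper, but the implementation diverges in two places, and the second is a genuine gap.

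First, the approximation scheme. The paper does not keep $p=\underline{p}$ fixed and invoke Theorem~\ref{thm1.4}; instead it takes a \emph{diagonal} sequence $(p_n,q_n)=(\underline{p}+a_n,\bar{q}-a_n)$ with $a_n\to 0^+$, so that both exponents are strictly subcritical and the approximating groundstates come directly from Corollary~\ref{cor1.2}. This sidesteps your uniformity problem for the threshold $\lambda_1(q_n)$ as $q_n\to\bar{q}$, and it keeps the analysis of the Choquard term along the sequence within the subcritical range handled by Lemmas~\ref{lem1.5} and~\ref{lem xiajixian}.

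Second, and more seriously, your energy estimate targets only the Sobolev threshold $\frac{1}{N}\lambda^{-(N-2)/2}S^{N/2}$. In the doubly critical problem, when the weak limit vanishes the Poho\v{z}aev identity forces a \emph{trichotomy} (this is (\ref{e1.135}) in the paper): either $\|u_n\|_{H^1}\to 0$, or $\limsup_n\|\nabla u_n\|_2^2\ge\lambda^{-(N-2)/2}S^{N/2}$, or $\limsup_n\|u_n\|_2^2\ge\mu^{-N/\alpha}S_1^{(N+\alpha)/\alpha}$. To exclude the third branch one must also prove
\[
c_{\underline{p},\bar{q}}<\frac{\alpha}{2(N+\alpha)}\mu^{-N/\alpha}S_1^{(N+\alpha)/\alpha}.
\]
The paper's Lemma~\ref{lem1.10} does this with a \emph{second} family of test functions, the lower-critical extremals $v_\delta(x)=\delta^{N/2}V(\delta x)$ from (\ref{e1.57}), and it is large $\lambda$ (not large $\mu$) that drives the energy below this $S_1$-threshold; conversely, with the Talenti profiles $u_\epsilon$ it is large $\mu$ that drives the energy below the Sobolev threshold. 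Your sketch uses only $u_\epsilon$, reverses the roles of $\lambda$ and $\mu$, and therefore cannot close the $S_1$-branch of the trichotomy.
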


\medskip

In the end of this section, we outline the methods used in this
paper. For convenience, we set $\underline{p}=\frac{N+\alpha}{N},
\bar{p}=\frac{N+\alpha}{N-2}$ and $\bar{q}=2^{*}=\frac{2N}{N-2}$ and
consider equations (\ref{e1.2222}), (\ref{e1.222}) and
(\ref{e1.22222}) in a uniform form
\begin{equation}\label{e1.22}
-\Delta u+u=\mu(I_{\alpha}\ast|u|^{p})|u|^{p-2}u+\lambda|u|^{q-2}u
\quad \mathrm{in}\  \mathbb{R}^N,
\end{equation}
with $p\in [\underline{p},\bar{p}]$, $q\in (2,\bar{q}]$, $\mu$ and
$\lambda$ being positive constants. By the Hardy-Littlewood-Sobolev
inequality( Lemma \ref{lem HLS}) and the Sobolev embedding theorem,
the functional $J_{p,q}:\ H^{1}(\mathbb{R}^N)\to \mathbb{R}$ defined
by
\begin{equation}\label{e1.24}
J_{p,q}(u)=\frac{1}{2}\int_{\mathbb{R}^N}|\nabla u|^2
+|u|^2-\frac{\mu}{2p}\int_{\mathbb{R}^N}(I_{\alpha}\ast|u|^{p})|u|^{p}-\frac{\lambda}{q}\int_{\mathbb{R}^N}|u|^{q}
\end{equation}
 is $C^1(H^{1}(\mathbb{R}^N),\mathbb{R})$ and
\begin{align}\label{e1.25}
\langle J_{p,q}'(u),w\rangle=\int_{\mathbb{R}^N}\nabla u \nabla w +
uw-\mu\int_{\mathbb{R}^N}(I_{\alpha}\ast|u|^{p})|u|^{p-2}uw-\lambda\int_{\mathbb{R}^N}|u|^{q-2}uw
\end{align}
for every $u, w\in H^{1}(\mathbb{R}^N)$. Hence, every critical
point of $J_{p,q}$ is a weak solution of (\ref{e1.22}). A nontrivial
solution $u\in H^{1}(\mathbb{R}^N)$   is called a groundstate if
$J_{p,q}(u)=c_{p,q}^{g}$, where
\begin{align}\label{e1.2}
c_{p,q}^{g}:=\inf\{J_{p,q}(v):v\in H^{1}(\mathbb{R}^N)\setminus
\{0\} \mathrm{\ and\ } J_{p,q}'(v)=0\}.
\end{align}
To prove Theorems \ref{thm1.3}-\ref{thm1.6}, we use the subcritical
approximation and the Poho\v{z}aev constraint method, which has
already been used to Schr\"{o}dinger equation \cite{Liu-Liao-Tang
2017}. More precisely, we define
\begin{equation}\label{e1.27}
c_{p,q}=\inf\{J_{p,q}(v):v\in H^{1}(\mathbb{R}^N)\setminus \{0\}\
\mathrm{and}\ P_{p,q}(v)=0 \},
\end{equation}
where $P_{p,q}(u): H^1(\mathbb{R}^N)\to \mathbb{R}$ is the
Poho\v{z}aev functional defined by
\begin{equation*}
\begin{split}
P_{p,q}(u)= &\frac{N-2}{2}\int_{\mathbb{R}^N}|\nabla u|^2
+\frac{N}{2}\int_{\mathbb{R}^N}|u|^2\\
&\quad-\frac{\mu(N+\alpha)}{2p}\int_{\mathbb{R}^N}(I_{\alpha}\ast|u|^{p})|u|^{p}-\frac{\lambda
N}{q}\int_{\mathbb{R}^N}|u|^{q}.
\end{split}
\end{equation*}
By carefully studying the properties of $c_{p,q}$ (Section 3) and by
using a sequence of groundstates of the subcritical problems, we can
show that $c_{p,q}$ is attained for various critical problems. By
further showing that every weak solution of (\ref{e1.22}) in
$H^1(\mathbb{R}^N)$ satisfies the Poho\v{z}aev identity (Section 2),
we can show that $c_{p,q}=c_{p,q}^{g}$ and  a groundstate is
obtained. In this paper, to use this method, we have to overcome two
difficulties: (a) Obtaining the Poho\v{z}aev identity of problem
(\ref{e1.22}), which is not an easy issue in our case; (b) Finer
calculations are needed for the interaction of the nonlocal
nonlinear term and the local nonlinear term.

This paper is organized as follows. In Section 2, we consider the
regularity and the Poho\v{z}aev identity of solutions for a general
Choquard equation. In Section 3 we give some preliminaries and study
the properties of $c_{p,q}$. Section 4 is devoted to the proof of
Theorems \ref{thm1.3}-\ref{thm1.6}.

\smallskip

\textbf{Basic notations}: Throughout this paper, we assume $N\geq
3$. $ C_c^{\infty}(\mathbb{R}^N)$ denotes the space of the functions
infinitely differentiable with compact support in $\mathbb{R}^N$.
$L^r(\mathbb{R}^N)$ with $1\leq r<\infty$ denotes the Lebesgue space
with the norms
$\|u\|_r=\left(\int_{\mathbb{R}^N}|u|^r\right)^{1/r}$.
 $ H^1(\mathbb{R}^N)$ is the usual Sobolev space with norm
$\|u\|_{H^1(\mathbb{R}^N)}=\left(\int_{\mathbb{R}^N}|\nabla
u|^2+|u|^2\right)^{1/2}$. $ D^{1,2}(\mathbb{R}^N)=\{u\in
L^{\frac{2N}{N-2}}(\mathbb{R}^N): |\nabla u|\in
L^2(\mathbb{R}^N)\}$. $H_r^1(\mathbb{R}^N)=\{u\in H^1(\mathbb{R}^N):
u\  \mathrm{is\ radially \ symmetric}\}$.

\section{Regularity of solutions and Poho\v{z}aev identity}

\setcounter{section}{2}
\setcounter{equation}{0}

In this section, we consider the general Choquard equation
\begin{equation}\label{e1.1}
-\Delta u+u=(I_{\alpha}\ast F(u))f(u)+g(u)\quad \mathrm{in}\
\mathbb{R}^N,
\end{equation}
where $N\geq 3,\ \alpha\in(0,N)$, $f,\ g\in
C(\mathbb{R},\mathbb{R})$, $F(s)=\int_0^sf(t)dt$,
$G(s)=\int_0^sg(t)dt$, $f$ and $g$ satisfy the following
assumptions:

(A1) There exists a positive constant $C_1$ such that for every
$s\in \mathbb{R}$,
$$|sf(s)|\leq C_1\left(|s|^{\frac{N+\alpha}{N}}+|s|^{\frac{N+\alpha}{N-2}}\right).$$

(A2) There exists a positive constant $C_2$ such that for every
$s\in \mathbb{R}$,
$$|sg(s)|\leq C_2\left(|s|^{2}+|s|^{\frac{2N}{N-2}}\right).$$

Next, we prove that any weak solution of (\ref{e1.1}) in
$H^1(\mathbb{R}^N)$ has additional regularity properties, which
allows us to establish the Poho\v{z}aev identity for all finite
energy solutions.

\begin{theorem}\label{thm1.1}
Assume that $N\geq 3$, $\alpha\in (0,N)$, (A1) and (A2) hold. If
$u\in H^1(\mathbb{R}^N)$ is a solution of (\ref{e1.1}), then $u\in
W_{\mathrm{loc}}^{2,p}(\mathbb{R}^N)$ for every $p>1$. Moreover, $u$
satisfies the Poho\v{z}aev identity
\begin{equation}\label{e1.4} \frac{N-2}{2}\int_{\mathbb{R}^N}|\nabla
u|^2+\frac{N}{2}\int_{\mathbb{R}^N}|u|^2=\frac{N+\alpha}{2}\int_{\mathbb{R}^N}(I_\alpha\ast
F(u))F(u)+N\int_{\mathbb{R}^N}G(u).
\end{equation}
\end{theorem}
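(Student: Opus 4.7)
The plan is to adapt the bootstrap-and-integration-by-parts strategy of Moroz and Van Schaftingen for pure Choquard equations to the present setting, where the additional local nonlinearity $g(u)$ with critical Sobolev growth must also be accommodated.

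For the regularity part, I would begin by observing that (A1) together with $u\in H^1(\mathbb{R}^N)\hookrightarrow L^2\cap L^{2^*}(\mathbb{R}^N)$ gives $F(u)\in L^{2N/(N+\alpha)}(\mathbb{R}^N)$, the natural endpoint for the Hardy--Littlewood--Sobolev inequality, and similarly $f(u)\in L^{2N/(N+\alpha)}+L^{2N/(N-2)\cdot(N-2)/(N+\alpha)}$-type spaces. Hence $I_\alpha\ast F(u)\in L^{2N/(N-\alpha)}(\mathbb{R}^N)$, so the product $(I_\alpha\ast F(u))f(u)$ lies in some $L^r(\mathbb{R}^N)$. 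Analogously (A2) yields $g(u)\in L^1+L^{2N/(N+2)}$. With this input, I would run a finite bootstrap: assuming $u\in L^s_{\rm loc}$ for some $s$, interpolate with $u\in L^{2^*}$, re-estimate $F(u)$ and $f(u)$ by (A1), apply HLS to upgrade $I_\alpha\ast F(u)$, bound $g(u)$ by (A2), and then invert $-\Delta+1$ (using the Bessel-potential or standard $L^p$ elliptic theory) to place $u$ in a strictly better space. After finitely many iterations one obtains $u\in L^p_{\rm loc}$ for every $p<\infty$, and since the critical exponents $(N+\alpha)/(N-2)$ and $2N/(N-2)$ are handled by the same inequalities, no obstruction occurs. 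A final application of Calder\'on--Zygmund to $(-\Delta+1)u\in L^p_{\rm loc}$ gives $u\in W^{2,p}_{\rm loc}(\mathbb{R}^N)$ for every $p>1$.

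For the Pohozaev identity, regularity $u\in W^{2,p}_{\rm loc}$ with $p>N$ yields $u\in C^1$ and allows me to test the equation against $\eta_R(x)\,x\cdot\nabla u$, where $\eta_R(x)=\eta(x/R)$ is a standard cutoff. Integration by parts on the left-hand side produces, after $R\to\infty$, the terms $\tfrac{N-2}{2}\int|\nabla u|^2+\tfrac{N}{2}\int|u|^2$; the cutoff boundary terms vanish because $|\nabla u|^2,|u|^2\in L^1(\mathbb{R}^N)$ and $|\nabla\eta_R|\le C/R$. The $g$-term rewrites as $x\cdot\nabla G(u)$ and yields $-N\int G(u)$ by the same cutoff argument, using that $G(u)\in L^1(\mathbb{R}^N)$ (a consequence of (A2) and the established $L^p$-regularity). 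For the Choquard term, I would follow the Moroz--Van Schaftingen computation: write
\begin{equation*}
\int_{\mathbb{R}^N}(I_\alpha\ast F(u))f(u)\,\eta_R(x\cdot\nabla u)=\int_{\mathbb{R}^N}(I_\alpha\ast F(u))\,\eta_R\,x\cdot\nabla F(u),
\end{equation*}
symmetrize via Fubini using the convolution kernel $|x-y|^{-(N-\alpha)}$, and exploit the homogeneity identity $x\cdot\nabla_x|x-y|^{-(N-\alpha)}+y\cdot\nabla_y|x-y|^{-(N-\alpha)}=-(N-\alpha)|x-y|^{-(N-\alpha)}$. Passing $R\to\infty$ yields $\tfrac{N+\alpha}{2}\int(I_\alpha\ast F(u))F(u)$, with the tail controlled by $F(u)\in L^{2N/(N+\alpha)}$.

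The main obstacle is the rigorous passage to the limit in the nonlocal term. The difficulty is twofold: one must verify that the cutoff remainder $\int(I_\alpha\ast F(u))(1-\eta_R)\,x\cdot\nabla F(u)$ vanishes as $R\to\infty$, which requires $x\cdot\nabla F(u)\in L^{2N/(N+\alpha)}$-type integrability, and one must justify the Fubini symmetrization using only the integrability available from (A1). Both points are handled by the growth control in (A1) combined with the regularity $u\in W^{2,p}_{\rm loc}$ together with the decay information $\nabla u\in L^2(\mathbb{R}^N)$; the rest of the proof is a careful but standard accounting of the four terms.
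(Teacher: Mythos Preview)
Your outline for the Poho\v{z}aev identity is essentially what the paper has in mind (the paper in fact omits the details and refers to Moroz--Van Schaftingen and to Willem for the cutoff/truncation computation). The regularity part, however, contains a genuine gap.

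The claim that ``no obstruction occurs'' at the critical exponents is precisely where a naive bootstrap fails. Start from $u\in L^{2^*}$. By (A2) the top growth of $g(u)$ is $|u|^{(N+2)/(N-2)}\in L^{2N/(N+2)}$, and inverting $-\Delta+1$ by Calder\'on--Zygmund returns $u\in W^{2,2N/(N+2)}_{\mathrm{loc}}\hookrightarrow L^{2^*}_{\mathrm{loc}}$, which is \emph{no improvement}. The Choquard term is equally critical: at the top growth in (A1) one has $F(u)\in L^{2N/(N+\alpha)}$ and $f(u)\in L^{2N/(2+\alpha)}$, so HLS gives $I_\alpha\ast F(u)\in L^{2N/(N-\alpha)}$ and the product $(I_\alpha\ast F(u))f(u)$ lands again in $L^{2N/(N+2)}$. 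The iteration is therefore stuck at the very first step, both for the local and the nonlocal nonlinearity, and interpolating with $L^{2^*}$ cannot help since $L^{2^*}$ is exactly the barrier you need to cross.

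The paper does not bootstrap directly. It rewrites the equation as
\[
-\Delta u+u=\bigl(I_\alpha\ast(H(u)u)\bigr)K(u)+V(u)u,\qquad H(s)=\tfrac{F(s)}{s},\ K(s)=f(s),\ V(s)=\tfrac{g(s)}{s},
\]
checks from (A1), (A2) that $H(u),K(u)\in L^{2N/(2+\alpha)}+L^{2N/\alpha}$ and $V(u)\in L^\infty+L^{N/2}$, and then proves a combined Brezis--Kato lemma (Lemma~\ref{pro1.1}): any $H^1$-solution of such an equation lies in $L^r(\mathbb{R}^N)$ for all $r\in\bigl[2,\tfrac{N}{\alpha}\tfrac{2N}{N-2}\bigr)$. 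The proof of that lemma is the technical heart of the section; it uses the smallness-splitting estimates of Moroz--Van Schaftingen (Lemma~\ref{lem1.1}) and of Brezis--Kato (Lemma~\ref{lem1.20}) to absorb the critical potentials into the left-hand side, together with an approximation by truncated coefficients and a Moser-type iteration with test functions $|u_{j,\mu}|^{p-2}u_{j,\mu}$. Only after this step does one get $I_\alpha\ast F(u)\in L^\infty$, the right-hand side becomes subcritical, and standard local elliptic regularity yields $u\in W^{2,p}_{\mathrm{loc}}$ for every $p>1$. Your proposal is missing exactly this Brezis--Kato mechanism; without it the iteration never leaves $L^{2^*}$.
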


To prove Theorem \ref{thm1.1}, we follow the proof in \cite{Moroz-Schaftingen 2015} for the equation
$-\Delta u+u=(I_\alpha\ast(Hu))K$
and in \cite{Brezis and Kato 1979} for the equation $-\Delta u+u=Vu$ . To the
end, we need some lemmas. The first lemma is cited from
\cite{Moroz-Schaftingen 2015}.

\begin{lemma}\label{lem1.1}
Let $N\geq 2,\ \alpha\in (0,N)$ and $\theta\in (0,2)$. If $H, K\in
L^{\frac{2N}{2+\alpha}}(\mathbb{R}^N)+L^{\frac{2N}{\alpha}}(\mathbb{R}^N)$
and $\frac{\alpha}{N}<\theta<2-\frac{\alpha}{N}$, then for every
$\epsilon>0$, there exists a positive constant $C(\epsilon,\theta)$
such that for every $u\in H^1(\mathbb{R}^N)$,
\begin{equation*}
\int_{\mathbb{R}^N}(I_\alpha\ast(H|u|^{\theta}))K|u|^{2-\theta}\leq
\epsilon^2\int_{\mathbb{R}^N}|\nabla
u|^2+C(\epsilon,\theta)\int_{\mathbb{R}^N}|u|^2.
\end{equation*}
\end{lemma}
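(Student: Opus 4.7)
The plan is to adapt the classical Brezis--Kato absorption argument ``$\int Vu^2 \le \varepsilon\|\nabla u\|_2^2 + C_\varepsilon\|u\|_2^2$'' for $V\in L^{N/2}+L^\infty$ to the present nonlocal bilinear setting. The nonlocal factor will be handled by Hardy--Littlewood--Sobolev, and the two $u$-factors $|u|^\theta,|u|^{2-\theta}$ will be distributed by H\"older and Young.

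\textbf{Step 1 (decomposition with a small part).} Decompose $H=H_\flat+H_\sharp$ and $K=K_\flat+K_\sharp$ with $H_\flat,K_\flat\in L^{\frac{2N}{2+\alpha}}(\mathbb{R}^N)$ and $H_\sharp,K_\sharp\in L^{\frac{2N}{\alpha}}(\mathbb{R}^N)$. Starting from any such representation, a truncation of the ``$L^{\frac{2N}{2+\alpha}}$-piece'' at height $M$---moving the bounded part $H_\flat\chi_{\{|H_\flat|\le M\}}$ into $H_\sharp$ (it lies in $L^{\frac{2N}{\alpha}}$ since $|f|^{2N/\alpha}\le M^{2N/\alpha-2N/(2+\alpha)}|f|^{2N/(2+\alpha)}$ where $|f|\le M$) and keeping only $H_\flat\chi_{\{|H_\flat|>M\}}$---makes $\|H_\flat\|_{\frac{2N}{2+\alpha}}$ arbitrarily small as $M\to\infty$, and similarly for $K$, at the price of (possibly) large $L^{\frac{2N}{\alpha}}$-norms of $H_\sharp,K_\sharp$.

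\textbf{Step 2 (four bilinear terms, HLS + H\"older).} Expanding the integrand yields
$$\int_{\mathbb{R}^N}(I_\alpha\ast(H|u|^\theta))K|u|^{2-\theta}=\sum_{i,j\in\{\flat,\sharp\}}I_{ij},\qquad I_{ij}:=\int_{\mathbb{R}^N}(I_\alpha\ast(H_i|u|^\theta))K_j|u|^{2-\theta}.$$
For each $I_{ij}$, I apply HLS with conjugate exponents $r_{ij},s_{ij}$ obeying $1/r_{ij}+1/s_{ij}=1+\alpha/N$, then H\"older to isolate $H_i,K_j$ in their natural $L^p$-spaces and $|u|^\theta,|u|^{2-\theta}$ in $L^q$-spaces with $q\in[2,2^*]$ (pairing with $L^{2^*}$ when the factor is of type $\flat$ and with $L^2$ when of type $\sharp$, with a non-symmetric choice in the two cross cases, depending on $\theta\lessgtr 1$). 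This yields
\begin{align*}
I_{\flat\flat}&\le C\|H_\flat\|_{\frac{2N}{2+\alpha}}\|K_\flat\|_{\frac{2N}{2+\alpha}}\|u\|_{2^*}^2,\\
I_{\sharp\sharp}&\le C\|H_\sharp\|_{\frac{2N}{\alpha}}\|K_\sharp\|_{\frac{2N}{\alpha}}\|u\|_2^2,\\
I_{\flat\sharp}+I_{\sharp\flat}&\le C\bigl(\|H_\flat\|_{\frac{2N}{2+\alpha}}\|K_\sharp\|_{\frac{2N}{\alpha}}+\|H_\sharp\|_{\frac{2N}{\alpha}}\|K_\flat\|_{\frac{2N}{2+\alpha}}\bigr)\|u\|_{2^*}^{a}\|u\|_2^{2-a}
\end{align*}
for some $a=a(\theta)\in(0,2)$. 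The hypothesis $\alpha/N<\theta<2-\alpha/N$ is precisely what forces $r_{ij},s_{ij}\in(1,\infty)$ in every case (and is sharp, since at $\theta=\alpha/N$ one of the HLS conjugates degenerates to $1$).

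\textbf{Step 3 (absorption).} Sobolev $\|u\|_{2^*}^2\le C_S\|\nabla u\|_2^2$ turns $I_{\flat\flat}$ into a gradient term, whose prefactor is $<\varepsilon^2/(2CC_S)$ by Step~1, contributing at most $\varepsilon^2\|\nabla u\|_2^2/2$. For the cross terms, Young $x^a y^{2-a}\le\eta x^2+C_\eta y^2$ (valid since $a/2+(2-a)/2=1$) applied to $\|u\|_{2^*}^a\|u\|_2^{2-a}$, followed by Sobolev on the resulting $\|u\|_{2^*}^2$, transfers an $\eta C_S\|\nabla u\|_2^2$ part and a $C_\eta\|u\|_2^2$ part; choosing $\eta$ small compared to the mixed prefactor keeps the gradient contribution below $\varepsilon^2\|\nabla u\|_2^2/2$. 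The $I_{\sharp\sharp}$ term is already of the form $C\|u\|_2^2$. Summing, the total gradient contribution is $\le\varepsilon^2\|\nabla u\|_2^2$ and the remainder is absorbed into $C(\varepsilon,\theta)\|u\|_2^2$, with $C(\varepsilon,\theta)$ depending on $\varepsilon,\theta,\|H_\sharp\|_{\frac{2N}{\alpha}},\|K_\sharp\|_{\frac{2N}{\alpha}}$.

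\textbf{Main obstacle.} The technical heart is the exponent bookkeeping in Step~2: in each of the four sub-cases one must choose H\"older exponents so that simultaneously $r_{ij},s_{ij}>1$ (HLS admissibility) and the resulting $u$-exponent lies in $[2,2^*]$ (so that Sobolev or Gagliardo--Nirenberg controls it). This double constraint is compatible precisely in the interval $\theta\in(\alpha/N,2-\alpha/N)$.
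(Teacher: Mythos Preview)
The paper does not supply its own proof of this lemma; immediately before the statement it writes that ``the first lemma is cited from [Moroz--Van Schaftingen, \emph{Trans.\ Amer.\ Math.\ Soc.}\ 367 (2015)]'' and gives no argument. Your proposal is essentially the proof given in that reference: split $H$ and $K$ into a small $L^{2N/(2+\alpha)}$-part and a (possibly large) $L^{2N/\alpha}$-part by truncation, bound each of the four resulting bilinear pieces via Hardy--Littlewood--Sobolev followed by H\"older, and then absorb the gradient contributions using Sobolev and Young. Your diagnosis of the main obstacle---that the admissibility of the HLS/H\"older exponents in the cross terms is governed exactly by the window $\alpha/N<\theta<2-\alpha/N$---is correct and is indeed the heart of the matter.

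One minor caveat: you invoke the critical Sobolev exponent $2^*=2N/(N-2)$, which requires $N\ge 3$, whereas the lemma as stated allows $N=2$. In dimension two one replaces the endpoint $2^*$ by a sufficiently large finite exponent (using $H^1(\mathbb{R}^2)\hookrightarrow L^p$ for every $p<\infty$), and the same scheme goes through with cosmetic changes. Since the present paper works under the standing assumption $N\ge 3$, this does not affect any of the applications here.
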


The following lemma can be found in  \cite{Brezis and Kato 1979}.

\begin{lemma}\label{lem1.20}
Let $N\geq 3$. If $V\in
L^{\infty}(\mathbb{R}^N)+L^{\frac{N}{2}}(\mathbb{R}^N)$, then for
every $\epsilon>0$, there exists a positive constant $C(\epsilon)$
such that for every $u\in H^1(\mathbb{R}^N)$,
\begin{equation*}
\int_{\mathbb{R}^N}V|u|^2\leq \epsilon^2\int_{\mathbb{R}^N}|\nabla
u|^2+C(\epsilon)\int_{\mathbb{R}^N}|u|^2.
\end{equation*}
\end{lemma}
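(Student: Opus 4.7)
The plan is to follow a standard Brezis--Kato splitting argument, combining the Sobolev embedding with a density approximation in $L^{N/2}$.

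First, I would decompose $V = V_1 + V_2$ with $V_1 \in L^\infty(\mathbb{R}^N)$ and $V_2 \in L^{N/2}(\mathbb{R}^N)$, and bound $|V|$ by $|V_1| + |V_2|$, so that it suffices to control $\int_{\mathbb{R}^N} |V_1| u^2$ and $\int_{\mathbb{R}^N} |V_2| u^2$ separately. The $L^\infty$ piece is trivial: $\int_{\mathbb{R}^N} |V_1| u^2 \le \|V_1\|_\infty \|u\|_2^2$, which already has the right form with no gradient term.

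The main step is to handle $V_2 \in L^{N/2}$. Given any $\delta > 0$, using that compactly supported bounded functions are dense in $L^{N/2}(\mathbb{R}^N)$, I would write $V_2 = W_\delta + R_\delta$ with $W_\delta \in L^\infty$ (depending on $\delta$) and $\|R_\delta\|_{N/2} < \delta$. By H\"older's inequality with exponents $N/2$ and $N/(N-2)$, combined with the Sobolev embedding $\|u\|_{2^*}^2 \le S \|\nabla u\|_2^2$ (where $2^* = 2N/(N-2)$ and $S$ is the Sobolev constant), one obtains
\begin{equation*}
\int_{\mathbb{R}^N} |R_\delta|\, u^2 \le \|R_\delta\|_{N/2}\, \|u\|_{2^*}^2 \le S\delta \int_{\mathbb{R}^N} |\nabla u|^2,
\end{equation*}
while $\int_{\mathbb{R}^N} |W_\delta|\, u^2 \le \|W_\delta\|_\infty \|u\|_2^2$.

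Given $\epsilon > 0$, I would choose $\delta = \epsilon^2 / S$, which fixes $W_\delta$; adding up the three bounds yields
\begin{equation*}
\int_{\mathbb{R}^N} V u^2 \le \epsilon^2 \int_{\mathbb{R}^N} |\nabla u|^2 + \bigl(\|V_1\|_\infty + \|W_\delta\|_\infty\bigr) \int_{\mathbb{R}^N} u^2,
\end{equation*}
with $C(\epsilon) := \|V_1\|_\infty + \|W_\delta\|_\infty$. The only subtle point is the density step, which requires the finiteness of the exponent $N/2$ and hence the hypothesis $N \ge 3$; the rest is bookkeeping with Sobolev and H\"older.
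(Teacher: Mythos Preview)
Your proof is correct and is exactly the standard Brezis--Kato splitting argument. The paper does not give its own proof of this lemma but simply cites \cite{Brezis and Kato 1979}; your argument is essentially the one found there, so there is nothing to compare. (One cosmetic point: the constant you call $S$ in $\|u\|_{2^*}^2\le S\|\nabla u\|_2^2$ is the reciprocal of the $S$ defined later in the paper, but this does not affect the argument.)
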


By using Lemmas \ref{lem1.1} and \ref{lem1.20}, we can obtain the
following result.

\begin{lemma}\label{pro1.1}
Let $N\geq 3$ and $\alpha\in (0,N)$. If $H, K\in
L^{\frac{2N}{2+\alpha}}(\mathbb{R}^N)+L^{\frac{2N}{\alpha}}(\mathbb{R}^N)$,
$V\in L^{\infty}(\mathbb{R}^N)+L^{\frac{N}{2}}(\mathbb{R}^N)$
 and $u\in H^1(\mathbb{R}^N)$ solves
\begin{equation}\label{e1.3}
-\Delta u+u=(I_\alpha\ast(Hu))K+Vu \quad \mathrm{in}\ \mathbb{R}^N,
\end{equation}
then $u\in L^r(\mathbb{R}^N)$ for
$r\in[2,\frac{N}{\alpha}\frac{2N}{N-2})$. Moreover, there exists a
positive constant $C(p)$ independent of $u$ such that
\begin{equation*}
\|u\|_p\leq C(p)\|u\|_2.
\end{equation*}
\end{lemma}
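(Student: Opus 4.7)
The plan is to prove the lemma by a Moser-type bootstrap iteration in the spirit of Brezis--Kato and Moroz--Schaftingen, using truncated test functions together with the two linear-type estimates provided by Lemmas \ref{lem1.1} and \ref{lem1.20}. The key observation is that Lemma \ref{lem1.1} with parameter $\theta = 1/s$ allows us to replace the test function $u$ by $|u|^{s}$ in the energy inequality whenever $1/s \in (\alpha/N,\,2-\alpha/N)$, i.e.\ whenever $s \in [1, N/\alpha)$, and the ceiling $(N/\alpha)(2N/(N-2))$ in the conclusion is exactly $2^{*}\cdot(N/\alpha)$.

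For the first step, fix $s\geq 1$ and $T>0$ and test equation (\ref{e1.3}) with the admissible function $\varphi_T = u\min(|u|^{2(s-1)},\,T^{2(s-1)})$, a bounded Lipschitz function of $u$ and hence in $H^1(\mathbb{R}^N)$. Introduce the companion $w_T = \mathrm{sgn}(u)\min(|u|^s,\,T^{s-1}|u|)\in H^1(\mathbb{R}^N)$; a chain-rule computation yields $u\varphi_T = |w_T|^2$ and the coercive lower bound $\int \nabla u\cdot\nabla\varphi_T \geq c(s)\int|\nabla w_T|^2$ for an explicit $c(s)>0$. Consequently,
\[
c(s)\int_{\mathbb{R}^N}|\nabla w_T|^2 + \int_{\mathbb{R}^N}|w_T|^2 \leq \int_{\mathbb{R}^N}(I_\alpha\ast(Hu))K\varphi_T + \int_{\mathbb{R}^N}V|w_T|^2.
\]

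Next I would bound each right-hand-side term by $\epsilon^{2}\int|\nabla w_T|^2 + C(\epsilon,s)\int|w_T|^2$. The $V$-term is handled directly by Lemma \ref{lem1.20} applied to $w_T$. For the nonlocal term, using $u = \mathrm{sgn}(u)|w_T|^{1/s}$ and $\varphi_T = \mathrm{sgn}(u)|w_T|^{2-1/s}$ on $\{|u|\leq T\}$ (with the plateau $\{|u|>T\}$ controlled by a similar expression with a harmless extra truncation factor), the integrand takes the form $(I_\alpha\ast(\tilde H|w_T|^{1/s}))\tilde K|w_T|^{2-1/s}$ with $\tilde H = \mathrm{sgn}(u)H$ and $\tilde K = \mathrm{sgn}(u)K$ still in $L^{2N/(2+\alpha)}+L^{2N/\alpha}$, so Lemma \ref{lem1.1} with $\theta = 1/s$ delivers the required absorption bound. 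Choosing $\epsilon$ small to absorb the gradient terms and letting $T\to\infty$ by monotone convergence yields $\int|\nabla|u|^s|^2 \leq C(s)\int|u|^{2s}$ whenever the right-hand side is finite, and Sobolev's inequality upgrades this to
\[
\|u\|_{2^{*}s} \leq C(s)^{1/(2s)}\|u\|_{2s}.
\]

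Finally, starting from the Sobolev embedding $u\in L^2\cap L^{2^{*}}$, iteration of the above promotion raises the exponent by the factor $2^{*}/2 = N/(N-2)$ at each round; after finitely many rounds any prescribed $r<(N/\alpha)(2N/(N-2))$ is surpassed, and composing the intermediate constants gives $\|u\|_r\leq C(r)\|u\|_2$. The main obstacle will be the bookkeeping in recasting the nonlocal integrand into the exact form demanded by Lemma \ref{lem1.1} — verifying that the sign-twisted coefficients $\tilde H,\tilde K$ stay in $L^{2N/(2+\alpha)}+L^{2N/\alpha}$, that the truncation plateau $\{|u|>T\}$ does not destroy the structure, and that the constant $C(\epsilon,s)$ from Lemma \ref{lem1.1} behaves controllably in terms of the distance from $1/s$ to $\alpha/N$; this degeneration as $s\uparrow N/\alpha$ is exactly what forces the strict upper bound $r<(N/\alpha)(2N/(N-2))$ in the conclusion.
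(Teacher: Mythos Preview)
Your overall strategy---Moser iteration via Lemmas~\ref{lem1.1} and~\ref{lem1.20}---is the same as the paper's, and your treatment of the local term $\int Vu\varphi_T=\int V|w_T|^2$ is fine. The gap is in the nonlocal term, and it is not just bookkeeping. On $\{|u|\le T\}$ you correctly have $u=\mathrm{sgn}(u)|w_T|^{1/s}$, but on the plateau $\{|u|>T\}$ one has $|w_T|=T^{s-1}|u|$, hence
\[
|u|=(|u|/T)^{(s-1)/s}\,|w_T|^{1/s},
\]
and the extra factor $(|u|/T)^{(s-1)/s}>1$ is \emph{unbounded}. Thus the corrected coefficient $\tilde H=\mathrm{sgn}(u)H\cdot(|u|/T)^{(s-1)/s}$ on the plateau is generally \emph{not} in $L^{2N/(2+\alpha)}+L^{2N/\alpha}$, so Lemma~\ref{lem1.1} does not apply to $(\tilde H,\tilde K,w_T)$. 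Because the convolution $I_\alpha\ast(Hu)$ is a single global object, you cannot split it by regions and apply different representations inside it; and trying to control the plateau contribution as an $o_T(1)$ error requires integrability of $K|w_T|^{2-1/s}$ in $L^{2N/(N+\alpha)}$, which amounts to $u\in L^r$ for exponents $r$ you have not yet reached in the iteration.

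This is precisely why the paper (following Moroz--Schaftingen and Brezis--Kato) does \emph{not} test \eqref{e1.3} directly. Instead it truncates the coefficients to $H_j,K_j\in L^{2N/\alpha}$ and $V_j\in L^\infty$, uses Lax--Milgram to produce auxiliary solutions $u_j\rightharpoonup u$ of
\[
-\Delta u_j+\lambda u_j=(I_\alpha\ast(H_ju_j))K_j+V_ju_j+(\lambda-1)u,
\]
and runs the Moser iteration on $u_j$. The point of the truncation is that with $H_j,K_j\in L^{2N/\alpha}$ the plateau remainder in the analogue of your nonlocal estimate becomes a genuine $o_\mu(1)$ under the inductive hypothesis $u_j\in L^p$, yielding $\|u_j\|_{pN/(N-2)}\le C(p)\|u\|_2$ uniformly in $j$; the estimate for $u$ then follows by Fatou's lemma. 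Your proposal would become correct if you inserted this coefficient-truncation/auxiliary-solution step before iterating.
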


\begin{proof}
We follow the proof of \cite{Brezis and Kato 1979} and
\cite{Moroz-Schaftingen 2015}. Set $H=H_1+H_2$, $K=K_1+K_2$ and
$V=V_1+V_2$, where $H_1, K_1\in
L^{\frac{2N}{\alpha}}(\mathbb{R}^N)$, $H_2,K_2\in
L^{\frac{2N}{2+\alpha}}(\mathbb{R}^N)$, $V_1\in
L^{\infty}(\mathbb{R}^N)$ and $V_2\in
L^{\frac{N}{2}}(\mathbb{R}^N)$. By Lemma \ref{lem1.1} with
$\theta=1$ and Lemma \ref{lem1.20}, there exists a constant
$\lambda>0$ such that for every $w\in H^1(\mathbb{R}^N)$,
\begin{equation}\label{e1.5}
\int_{\mathbb{R}^N}(I_\alpha\ast((|H_1|+|H_2|)|w|))((|K_1|+|K_2|)|w|)\leq
\frac{1}{4}\int_{\mathbb{R}^N}|\nabla
w|^2+\frac{\lambda}{4}\int_{\mathbb{R}^N}|w|^2
\end{equation}
and
\begin{equation}\label{e1.6}
\int_{\mathbb{R}^N}(|V_1|+|V_2|)|w|^2\leq
\frac{1}{4}\int_{\mathbb{R}^N}|\nabla
w|^2+\frac{\lambda}{4}\int_{\mathbb{R}^N}|w|^2.
\end{equation}
For any function $M(x)$ and each $j\in \mathbb{N}$ define $M_{j}(x)$
by
\begin{equation*}
M_{j}(x)=\left\{\begin{array}{ll}
j, &\ \mathrm{if}\ M(x)> j,\\
M(x), &\ \mathrm{if}\ |M(x)|\leq j,\\
-j, &\ \mathrm{if}\ M(x)< -j.
\end{array}
\right.
\end{equation*}
Then the sequences $\{H_j:=H_1+H_{2j}\}, \{K_j:=K_1+K_{2j}\} \in
L^{\frac{2N}{\alpha}}(\mathbb{R}^N)$, $\{V_j:=V_1+V_{2j}\} \in
L^{\infty}(\mathbb{R}^N)$ satisfy $|H_j|\leq |H_1|+|H_2|$,
$|K_j|\leq |K_1|+|K_2|$, $|V_j|\leq |V_1|+|V_2|$, $H_j\to H$,
$K_j\to K$ and  $V_j\to V$ a.e. on $\mathbb{R}^N$. For each $j$,
define a bilinear form $a_j: H^1(\mathbb{R}^N)\times
H^1(\mathbb{R}^N)\to \mathbb{R}$ by
\begin{equation*}
a_j(\varphi,\psi)=\int_{\mathbb{R}^N}\nabla\varphi\nabla
\psi+\lambda\varphi\psi-\int_{\mathbb{R}^N}(I_\alpha\ast
(H_j\varphi))K_j\psi-\int_{\mathbb{R}^N}V_j\varphi\psi,\ \varphi,
\psi\in H^1(\mathbb{R}^N).
\end{equation*}
By (\ref{e1.5}) and (\ref{e1.6}), we have
\begin{equation}\label{e1.7}
a_j(\varphi,\varphi)\geq
\frac{1}{2}\int_{\mathbb{R}^N}|\nabla\varphi|^2+\frac{\lambda}{2}\int_{\mathbb{R}^N}|\varphi|^2
\end{equation}
for any $\varphi\in H^1(\mathbb{R}^N)$ and $j\in \mathbb{N}$. That
is, $a_j$ is coercive. The Lax-Milgram theorem \cite{Brezis 2012}
implies that there exists a unique solution $u_j\in
H^1(\mathbb{R}^N)$ to the problem
\begin{equation}\label{e1.8}
-\Delta u_j+\lambda
u_j=(I_\alpha\ast(H_ju_j))K_j+V_ju_j+(\lambda-1)u \quad \mathrm{in}\
\mathbb{R}^N,
\end{equation}
where $u\in H^1(\mathbb{R}^N)$ is the given solution of
(\ref{e1.3}).

We claim that the sequence $\{u_j\}$ converges weakly to $u$ in
$H^1(\mathbb{R}^N)$ as $j\to \infty$. Indeed, multiplying both sides
of (\ref{e1.8}) by $u_j$ and integrating it over $\mathbb{R}^N$, by
using (\ref{e1.7}) and the Cauchy inequality, we obtain that
\begin{equation*}
\frac{1}{2}\int_{\mathbb{R}^N}|\nabla
u_j|^2+\frac{\lambda}{2}\int_{\mathbb{R}^N}|u_j|^2\leq
(\lambda-1)\int_{\mathbb{R}^N}uu_j\leq
\frac{\lambda}{4}\int_{\mathbb{R}^N}|u_j|^2+C(\lambda)\int_{\mathbb{R}^N}|u|^2.
\end{equation*}
That is,
\begin{equation}\label{e1.21}
\frac{1}{2}\int_{\mathbb{R}^N}|\nabla
u_j|^2+\frac{\lambda}{4}\int_{\mathbb{R}^N}|u_j|^2\leq
C(\lambda)\int_{\mathbb{R}^N}|u|^2.
\end{equation}
Hence, $\{u_j\}$ is bounded in $H^1(\mathbb{R}^N)$ and then there
exists $v\in H^1(\mathbb{R}^N)$ such that $u_j\rightharpoonup v$
weakly in $H^1(\mathbb{R}^N)$ and $u_j\to v$ a.e. on $\mathbb{R}^N$.
Since $|V_j|\leq |V_1|+|V_2|$, we have $\{V_j\}$ is bounded in
$L^{\infty}(\mathbb{R}^N)+L^{\frac{N}{2}}(\mathbb{R}^N)$ and then
$\{V_ju_j\}$ is bounded in
$L^{2}(\mathbb{R}^N)+L^{\frac{2N}{N+2}}(\mathbb{R}^N)$. Thus,
\begin{equation}\label{e1.9}
\int_{\mathbb{R}^N}V_ju_j\varphi\to\int_{\mathbb{R}^N}Vv\varphi
\end{equation}
as $j\to \infty$ for any $\varphi\in C_c^{\infty}(\mathbb{R}^N)$.
Since $|K_j|\leq |K_1|+|K_2|$ and $|H_j|\leq |H_1|+|H_2|$, we obtain
that $\{K_j\}$ and $\{H_j\}$ are bounded in
$L^{\frac{2N}{2+\alpha}}(\mathbb{R}^N)+L^{\frac{2N}{\alpha}}(\mathbb{R}^N)$
and then $\{H_ju_j\}$ and $\{K_j\varphi\}$ are bounded in
$L^{\frac{2N}{N+\alpha}}(\mathbb{R}^N)$. It follows from Lemma
\ref{lem weak} that $H_ju_j\rightharpoonup Hv$ weakly in
$L^{\frac{2N}{N+\alpha}}(\mathbb{R}^N)$ and the Lebesgue dominated
convergence theorem implies that $K_j\varphi\to K\varphi$ strongly
in $L^{\frac{2N}{N+\alpha}}(\mathbb{R}^N)$ for any $\varphi\in
C_c^{\infty}(\mathbb{R}^N)$ and then
\begin{equation}\label{e1.10}
\int_{\mathbb{R}^N}(I_\alpha\ast(H_ju_j))K_j\varphi\to
\int_{\mathbb{R}^N}(I_\alpha\ast(Hv))K\varphi
\end{equation}
as $j\to \infty$. In view of (\ref{e1.8}),(\ref{e1.9}) and
(\ref{e1.10}), $v\in H^1(\mathbb{R}^N)$ is a weak solution of
\begin{equation}\label{e1.11}
-\Delta v+\lambda v=(I_\alpha\ast(Hv))K+Vv+(\lambda-1)u \quad
\mathrm{in}\ \mathbb{R}^N.
\end{equation}
Since (\ref{e1.11}) has a unique solution $u$, we obtain that $v=u$
and the claim holds.

For $\mu>0$, we define the truncation $u_{j,\mu}: \mathbb{R}^N\to
\mathbb{R}$  by
\begin{equation*}
u_{j,\mu}(x)=\left\{\begin{array}{ll}
\mu, &\ \mathrm{if}\ u_{j}(x)\geq \mu,\\
u_{j}(x), &\ \mathrm{if}\ -\mu<u_{j}(x)<\mu,\\
-\mu, &\ \mathrm{if}\ u_{j}(x)\leq -\mu.
\end{array}
\right.
\end{equation*}
For any $p\geq 2$, $|u_{j,\mu}|^{p-2}u_{j,\mu}\in
H^1(\mathbb{R}^N)$. Taking it as a test function in (\ref{e1.8}),
one has
\begin{equation}\label{e1.12}
\begin{split}
\frac{4(p-1)}{p^2}&\int_{\mathbb{R}^N}|\nabla
(u_{j,\mu}^{p/2})|^2+\lambda \int_{\mathbb{R}^N}|
u_{j,\mu}^{p/2}|^2\\
&\leq
\int_{\mathbb{R}^N}(I_\alpha\ast(H_ju_j))K_j|u_{j,\mu}|^{p-2}u_{j,\mu}+\int_{\mathbb{R}^N}V_ju_j|u_{j,\mu}|^{p-2}u_{j,\mu}\\
&\quad \quad \quad
+(\lambda-1)\int_{\mathbb{R}^N}u|u_{j,\mu}|^{p-2}u_{j,\mu}.
\end{split}
\end{equation}
For any $p\in[2,\frac{2N}{\alpha})$, if $u_j\in L^p(\mathbb{R}^N)$,
then
\begin{equation}\label{e1.17}
\begin{split}
\int_{\mathbb{R}^N}(I_\alpha\ast(H_ju_j))&K_j|u_{j,\mu}|^{p-2}u_{j,\mu}\\
&\leq\frac{(p-1)}{p^2}\int_{\mathbb{R}^N}|\nabla
(u_{j,\mu}^{p/2})|^2+C(p)\int_{\mathbb{R}^N}|
u_{j,\mu}^{p/2}|^2+o_\mu(1),
\end{split}
\end{equation}
where $\lim_{\mu\to\infty}o_\mu(1)=0$. See \cite{Moroz-Schaftingen
2015}. By Lemma \ref{lem1.20} and the Lebesgue dominated convergence
theorem, we have
\begin{equation}\label{e1.16}
\begin{split}
\int_{\mathbb{R}^N}V_ju_j|u_{j,\mu}|^{p-2}u_{j,\mu}&\leq \int_{\{x:
|u_j(x)|\leq \mu\}}|V_j||u_{j,\mu}|^{p}+\int_{\{x: |u_j(x)|>
\mu\}}|V_j||u_{j}|^{p}\\
&\leq \int_{\mathbb{R}^N}(|V_1|+|V_2|)|u_{j,\mu}|^{p}+ j\int_{\{x:
|u_j(x)|> \mu\}}|u_j|^p\\
&\leq \frac{(p-1)}{p^2}\int_{\mathbb{R}^N}|\nabla
(u_{j,\mu}^{p/2})|^2+C(p)\int_{\mathbb{R}^N}|
u_{j,\mu}^{p/2}|^2+o_\mu(1).
\end{split}
\end{equation}
By the H\"{o}lder inequality and the Young inequality, we have
\begin{equation}\label{e1.18}
\begin{split}
\int_{\mathbb{R}^N}u|u_{j,\mu}|^{p-2}u_{j,\mu}\leq
\|u\|_p\|u_{j,\mu}\|_p^{p-1}\leq
C(p)\left(\int_{\mathbb{R}^N}|u|^p+\int_{\mathbb{R}^N}|u_{j,\mu}|^p\right).
\end{split}
\end{equation}
Inserting (\ref{e1.17})-(\ref{e1.18}) into (\ref{e1.12}), if $u_k\in
L^p(\mathbb{R}^N)$ with $p\in[2,\frac{2N}{\alpha})$,  we have
\begin{equation}\label{e1.19}
\begin{split}
\int_{\mathbb{R}^N}|\nabla (u_{j,\mu}^{p/2})|^2&\leq
C(p)\left(\int_{\mathbb{R}^N}|u|^p+\int_{\mathbb{R}^N}|u_{j,\mu}|^p\right)+o_\mu(1)\\
&\leq
C(p)\left(\int_{\mathbb{R}^N}|u|^p+\int_{\mathbb{R}^N}|u_{j}|^p\right)+o_\mu(1),
\end{split}
\end{equation}
where $C(p)$ is a positive constant independent of $\mu$ and $j$.
Letting $\mu\to \infty$ in (\ref{e1.19}) and by using the Sobolev
imbedding theorem, we have
\begin{equation}\label{e1.20}
\begin{split}
\left(\int_{\mathbb{R}^N}|u_j|^{\frac{p}{2}\frac{2N}{N-2}}\right)^{\frac{N-2}{N}}\leq
C(p)\left(\int_{\mathbb{R}^N}|u|^p+\int_{\mathbb{R}^N}|u_{j}|^p\right).
\end{split}
\end{equation}
In view of (\ref{e1.21}), we have $\|u_j\|_2\leq C(\lambda)\|u\|_2$,
where $C(\lambda)$ independent of $j$. Iterating this process from
$p=2$ a finite time of steps, we obtain finally for every
$p\in[2,\frac{2N}{\alpha})$,
\begin{equation*}
\begin{split}
\|u_j\|_{\frac{p}{2}\frac{2N}{N-2}}\leq C(p)\|u\|_2.
\end{split}
\end{equation*}
By Fatou's lemma, $\|u\|_{\frac{p}{2}\frac{2N}{N-2}}\leq
C(p)\|u\|_2$. That is, $u\in L^r(\mathbb{R}^N)$ for
$r\in[2,\frac{N}{\alpha}\frac{2N}{N-2})$ and $\|u\|_r\leq
C(r)\|u\|_2$. The proof is complete.
\end{proof}

\textbf{Proof of Theorem \ref{thm1.1}.}  Set $H(s)=F(s)/s$,
$K(s)=f(s)$ and $V(s)=g(s)/s$, then (\ref{e1.1}) can be written in
the form
\begin{equation*}
-\Delta u+u=(I_{\alpha}\ast (H(u)u))K(u)+V(u)u\quad \mathrm{in}\
\mathbb{R}^N.
\end{equation*}
By (A1) and (A2), we have
\begin{equation*}
\begin{split}
|K(u)|, |H(u)|\leq
C\left(|u|^{\frac{\alpha}{N}}+|u|^{\frac{2+\alpha}{N-2}}\right),\quad
|V(u)|\leq C\left(1+|u|^{\frac{4}{N-2}}\right).
\end{split}
\end{equation*}
Thus, $H(u), K(u)\in
L^{\frac{2N}{2+\alpha}}(\mathbb{R}^N)+L^{\frac{2N}{\alpha}}(\mathbb{R}^N)$,
$V(u)\in L^{\infty}(\mathbb{R}^N)+L^{\frac{N}{2}}(\mathbb{R}^N)$. By
Lemma \ref{pro1.1}, $u\in L^r(\mathbb{R}^N)$ for
$r\in[2,\frac{N}{\alpha}\frac{2N}{N-2})$. In view of (A1), $F(u)\in
L^{q}(\mathbb{R}^N)$ for  $q\in
[\frac{2N}{N+\alpha},\frac{N}{\alpha}\frac{2N}{N+\alpha})$. Since
$\frac{2N}{N+\alpha}<\frac{N}{\alpha}<\frac{N}{\alpha}\frac{2N}{N+\alpha}$,
we have $I_\alpha\ast F(u)\in L^{\infty}(\mathbb{R}^N)$, and thus
\begin{equation*}
|-\Delta u+u|\leq
C\left(|u|^{\frac{\alpha}{N}}+|u|^{\frac{2+\alpha}{N-2}}+|u|+|u|^{\frac{N+2}{N-2}}\right).
\end{equation*}
By the regularity theory for local problems in bounded domains, we
deduce that $u\in W_{\mathrm{loc}}^{2,p}(\mathbb{R}^N)$ for every
$p>1$. See Appendix B in \cite{Struwe 2008}.

The identity (\ref{e1.4}) can be  proved by using the truncation
argument, see (\cite{Moroz-Schaftingen 2015}, Theorem 3) for the
equation $-\Delta u+u=(I_{\alpha}\ast F(u))f(u)$ and (\cite{Willem
1996}, Appendix B) for $-\Delta u=g(u)$. The details will be
omitted.

\medskip

Applying Theorem \ref{thm1.1} to equation (\ref{e1.22}), we obtain
the following result.

\begin{corollary}\label{cor1.1}
Let $N\geq 3$, $\alpha\in (0,N)$, $p\in
[\frac{N+\alpha}{N},\frac{N+\alpha}{N-2}]$ and $q\in
[2,\frac{2N}{N-2}]$. If $u\in H^1(\mathbb{R}^N)$ is a solution of
(\ref{e1.22}), then $u$ satisfies the Poho\v{z}aev identity
\begin{align}\label{e1.23}
\frac{N-2}{2}\int_{\mathbb{R}^N}|\nabla u|^2
+\frac{N}{2}\int_{\mathbb{R}^N}|u|^2=\frac{\mu(N+\alpha)}{2p}\int_{\mathbb{R}^N}(I_{\alpha}\ast|u|^{p})|u|^{p}+\frac{\lambda
N}{q}\int_{\mathbb{R}^N}|u|^{q}.
\end{align}
\end{corollary}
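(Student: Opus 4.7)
The plan is to view Corollary \ref{cor1.1} as a direct specialization of Theorem \ref{thm1.1}: it suffices to rewrite the nonlinearities of (\ref{e1.22}) in the form required by (\ref{e1.1}) and to check the growth conditions (A1) and (A2) in the prescribed range of $p$ and $q$. Concretely, I would set
\[
F(u) := \sqrt{\mu/p}\,|u|^{p}, \qquad f(u) := F'(u) = \sqrt{\mu p}\,|u|^{p-2}u, \qquad g(u) := \lambda |u|^{q-2}u,
\]
which are continuous on $\mathbb{R}$ since $p \geq \frac{N+\alpha}{N} > 1$ and $q \geq 2$. A direct computation gives $(I_{\alpha}\ast F(u))f(u) = \mu\,(I_{\alpha}\ast |u|^p)|u|^{p-2}u$, and $g$ matches the local term of (\ref{e1.22}) tautologically, so (\ref{e1.22}) coincides with (\ref{e1.1}) for this choice of data.

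Next I would verify the two growth hypotheses. Since $|sf(s)| = \sqrt{\mu p}\,|s|^p$ and $p\in [\frac{N+\alpha}{N},\frac{N+\alpha}{N-2}]$, the bound (A1) follows by comparing $|s|^p$ with $|s|^{(N+\alpha)/N}$ on $\{|s|\leq 1\}$ and with $|s|^{(N+\alpha)/(N-2)}$ on $\{|s|>1\}$; likewise, $|sg(s)| = \lambda |s|^q$ with $q\in [2,\frac{2N}{N-2}]$ yields (A2) by the same interpolation argument between the endpoint exponents. Theorem \ref{thm1.1} therefore applies and gives the Pohožaev identity (\ref{e1.4}) for this $F$ and $G$.

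To conclude, I would substitute the explicit expressions $(I_{\alpha}\ast F(u))F(u) = \frac{\mu}{p}(I_{\alpha}\ast|u|^p)|u|^p$ and $G(u) = \frac{\lambda}{q}|u|^q$ into (\ref{e1.4}); the two coefficients collapse precisely to $\frac{\mu(N+\alpha)}{2p}$ and $\frac{\lambda N}{q}$, giving (\ref{e1.23}). I do not foresee any genuine obstacle, since all the delicate work — upgrading $H^1$ solutions to $W^{2,p}_{\mathrm{loc}}$ via Lemma \ref{pro1.1} and proving the Pohožaev identity through truncation — has already been carried out inside Theorem \ref{thm1.1}. The only point requiring a moment of care is the normalization: one must split the factor $\mu$ symmetrically between $F$ and $f$ (hence the choice $c = \sqrt{\mu/p}$ in $F = c|u|^p$) so that the product $(I_\alpha\ast F)\,f$ reproduces the prescribed coefficient $\mu$ rather than $\mu/p$ or $\mu^2$.
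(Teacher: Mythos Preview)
Your proposal is correct and is exactly the approach the paper intends: the paper simply states that Corollary~\ref{cor1.1} follows by ``applying Theorem~\ref{thm1.1} to equation~(\ref{e1.22})'' without spelling out the choice of $F$, $f$, $g$ or the verification of (A1)--(A2), and your argument fills in precisely those details. The normalization $F(u)=\sqrt{\mu/p}\,|u|^{p}$ is the right one, since the constraint $F'=f$ in Theorem~\ref{thm1.1} forces $c^2 p=\mu$ once $F=c|u|^p$.
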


\textbf{Remark.} The regularity and the Poho\v{z}aev identity of
solutions to (\ref{e1.22}) have been studied in \cite{Li-Ma-Zhang}
by using a direct bootstrap argument under some restrictions on $p$
and $q$. Our result here is a complement of \cite{Li-Ma-Zhang}.

\section{Properties of $c_{p,q}$}

\setcounter{section}{3}
\setcounter{equation}{0}

In this section, we first give some preliminaries and then study the
properties of $c_{p,q}$ defined in (\ref{e1.27}). The following well
known Hardy-Littlewood-Sobolev inequality can be found in
\cite{Lieb-Loss 2001}.

\begin{lemma}\label{lem HLS}
Let $p, r>1$ and $0<\alpha<N$ with $1/p+(N-\alpha)/N+1/r=2$. Let
$u\in L^p(\mathbb{R}^N)$ and $v\in L^r(\mathbb{R}^N)$. Then there
exists a sharp constant $C(N,\alpha,p)$, independent of $u$ and $v$,
such that
\begin{equation*}
\left|\int_{\mathbb{R}^N}\int_{\mathbb{R}^N}\frac{u(x)v(y)}{|x-y|^{N-\alpha}}\right|\leq
C(N,\alpha,p)\|u\|_p\|v\|_r.
\end{equation*}
If $p=r=\frac{2N}{N+\alpha}$, then
\begin{equation*}
C(N,\alpha,p)=C_\alpha(N)=\pi^{\frac{N-\alpha}{2}}\frac{\Gamma(\frac{\alpha}{2})}{\Gamma(\frac{N+\alpha}{2})}\left\{\frac{\Gamma(\frac{N}{2})}{\Gamma(N)}\right\}^{-\frac{\alpha}{N}}.
\end{equation*}
\end{lemma}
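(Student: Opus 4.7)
The plan is to split the proof into two logically independent parts: first, the existence of some finite constant $C(N,\alpha,p)$ for which the inequality holds, and second, the identification of the sharp constant in the conformally invariant case $p=r=\frac{2N}{N+\alpha}$.

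For the first part, I would follow the classical weak-type + interpolation strategy. Set $T f(x) := \int_{\mathbb{R}^N} |x-y|^{-(N-\alpha)} f(y)\, dy$. For fixed $R>0$ and $f\in L^p$ with $\|f\|_p=1$, I would decompose the kernel as $|x-y|^{-(N-\alpha)} = k_R(x-y) + k^R(x-y)$, where $k_R$ is the part supported in $\{|z|\le R\}$ (which lies in $L^{1}$ with a controlled norm depending on $R$) and $k^R$ is bounded by $R^{-(N-\alpha)}$. Applying Young's inequality to the first piece and H\"older's inequality to the second yields pointwise bounds of the form $|Tf(x)| \le c(R^\alpha M f(x) + R^{\alpha - N/p'})$, where $Mf$ is the maximal function. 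Optimizing in $R$ gives the weak-type $(p,q)$ bound with $1/q = 1/p - \alpha/N$; Marcinkiewicz interpolation between two such weak endpoints then upgrades this to the strong-type inequality. The fractional integration bound combined with duality $1/p + (N-\alpha)/N + 1/r = 2$ finishes the general statement.

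For the second part (sharp constant on the conformally invariant diagonal), I would argue as Lieb. First, Riesz's rearrangement inequality allows one to replace $u$ and $v$ by their symmetric-decreasing rearrangements without decreasing the left-hand side, while the $L^{2N/(N+\alpha)}$ norms are preserved; so extremizers can be assumed radial and nonincreasing. Next, I would exploit the conformal symmetry: stereographic projection from $\mathbb{R}^N$ to $S^N$ transforms $\|u\|_{2N/(N+\alpha)}$ covariantly, and the kernel $|x-y|^{-(N-\alpha)}$ becomes (up to conformal weights) the chordal distance kernel on $S^N$. A competing-symmetries / rotational argument on the sphere then forces extremizers to be constant on $S^N$, which pulls back to functions proportional to $(1+|x|^2)^{-(N+\alpha)/2}$. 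Plugging these into the inequality and using the beta/Gamma integrals yields the explicit value $C_\alpha(N)$ stated.

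The step I expect to be most delicate is the identification of extremizers in the conformally invariant case: the rearrangement step only reduces us to radial decreasing functions, and passing from there to rigidity of constants on $S^N$ requires the competing-symmetries method (or, alternatively, a direct analysis of the Euler--Lagrange equation using the inversion symmetry), which is the nontrivial ingredient specific to Lieb's argument. The first part (non-sharp inequality) is routine real-variable harmonic analysis, and once the weak-type endpoint is in hand the rest follows mechanically from Marcinkiewicz interpolation.
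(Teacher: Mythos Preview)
Your proposal outlines the standard proof of the Hardy--Littlewood--Sobolev inequality (weak-type bounds plus Marcinkiewicz interpolation for the non-sharp part, and Lieb's rearrangement/conformal symmetry argument for the sharp diagonal constant), and there is nothing mathematically wrong with it. However, you should be aware that the paper itself does \emph{not} prove this lemma: it simply states it as a well-known result and cites Lieb--Loss, \emph{Analysis}, for the proof. So there is no ``paper's own proof'' to compare against here; the authors are quoting the inequality as a black box. What you have sketched is essentially the argument one finds in that reference (or in Lieb's original 1983 paper), so your approach is consistent with the cited source, but it goes well beyond what the paper requires --- for the purposes of this paper, a one-line citation would suffice.
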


\begin{remark}\label{rek1.3}
By the Hardy-Littlewood-Sobolev inequality above, for any $v\in
L^s(\mathbb{R}^N)$ with $s\in(1,\frac{N}{\alpha})$, $I_\alpha\ast
v\in L^{\frac{Ns}{N-\alpha s}}(\mathbb{R}^N)$ and
\begin{equation*}
\|I_\alpha\ast v\|_{\frac{Ns}{N-\alpha s}}\leq
A_\alpha(N)C(N,\alpha,s)\|v\|_s.
\end{equation*}
\end{remark}

The following lemma is useful in concerning the uniform bound of
radial nonincreasing functions, see \cite{Berestycki-Lions 1983} for
its proof.

\begin{lemma}\label{lem jx}
If $u\in L^t(\mathbb{R}^N),\ 1\leq t<+\infty$, is a radial
nonincreasing function (i.e. $0\leq u(x)\leq u(y)$ if $|x|\geq
|y|$), then one has
\begin{equation*}
|u(x)|\leq |x|^{-N/t}\left(\frac{N}{|S^{N-1}|}\right)^{1/t}\|u\|_t,
\ x\neq 0.
\end{equation*}
\end{lemma}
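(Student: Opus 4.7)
The plan is to exploit monotonicity: since $u$ is radial and nonincreasing, the value $u(x)$ is a uniform lower bound for $|u|$ on the entire ball $B_{|x|}(0)$, so integrating $|u|^t$ over this ball gives both a lower bound in terms of $|u(x)|$ and an upper bound in terms of $\|u\|_t$.

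Concretely, I would fix $x \neq 0$ and observe that for every $y$ with $|y| \leq |x|$ one has $|u(y)| \geq |u(x)| \geq 0$ by the radial nonincreasing hypothesis. Then I would write
\begin{equation*}
\|u\|_t^t \;\geq\; \int_{B_{|x|}(0)} |u(y)|^t\, dy \;\geq\; |u(x)|^t \cdot \bigl|B_{|x|}(0)\bigr| \;=\; |u(x)|^t \cdot \frac{|S^{N-1}|}{N}\, |x|^N,
\end{equation*}
using the standard formula $|B_r(0)| = \frac{|S^{N-1}|}{N} r^N$. Solving this inequality for $|u(x)|$ and taking the $t$-th root yields the claimed bound
\begin{equation*}
|u(x)| \leq |x|^{-N/t}\left(\frac{N}{|S^{N-1}|}\right)^{1/t}\|u\|_t.
\end{equation*}

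There is essentially no obstacle: the argument is a one-line monotonicity/volume comparison. The only point to be slightly careful about is that the hypothesis as stated gives $u \geq 0$ (so $|u(x)| = u(x)$), which is exactly what is needed for the pointwise comparison on $B_{|x|}(0)$; if one wished to allow sign-changing $u$, one would interpret ``radial nonincreasing'' as applying to $|u|$ and the same argument goes through verbatim.
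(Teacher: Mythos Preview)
Your argument is correct and is exactly the standard proof of this radial lemma. The paper does not give its own proof of this statement but simply cites \cite{Berestycki-Lions 1983}; the one-line monotonicity/volume comparison you wrote is precisely the argument found there.
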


The following lemma can be found in  \cite{Bogachev 2007} and
\cite{Willem 2013}.

\begin{lemma}\label{lem weak}
Let $\Omega\subset \mathbb{R}^N$ be a domain, $q\in (1,\infty)$ and
$\{u_n\}$ be a bounded sequence in $L^{q}(\Omega)$. If $u_n\to u$
a.e. on $\Omega$, then $u_n\rightharpoonup u$ weakly in
$L^q(\Omega)$.
\end{lemma}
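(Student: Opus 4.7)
The plan is to combine reflexivity of $L^q$ with Egorov's theorem to identify the weak limit and then invoke the usual subsequence-of-subsequence argument. First, since $q\in(1,\infty)$ the space $L^q(\Omega)$ is reflexive, so the bounded sequence $\{u_n\}$ admits a weakly convergent subsequence in $L^q(\Omega)$. Applying Fatou's lemma to $|u_n|^q$ yields $\|u\|_q\le \liminf_{n\to\infty}\|u_n\|_q<\infty$, so $u\in L^q(\Omega)$ is a legitimate candidate for the weak limit.

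Next I would identify the weak limit. Passing to a subsequence I may assume $u_{n_k}\rightharpoonup v$ in $L^q(\Omega)$ for some $v\in L^q(\Omega)$, and the goal is to show $v=u$ a.e. It suffices to verify that $\int_A u_n\to\int_A u$ for every measurable set $A\subset\Omega$ of finite measure, since combined with $\int_A u_{n_k}\to\int_A v$ (weak convergence tested against $\chi_A\in L^{q'}(\Omega)$) and the $\sigma$-finiteness of $\Omega$, this forces $v=u$ almost everywhere. Given such an $A$ and $\varepsilon>0$, Egorov's theorem produces $B\subset A$ with $|A\setminus B|<\varepsilon$ on which $u_n\to u$ uniformly. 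Writing
\[
\int_A (u_n-u) = \int_B(u_n-u) + \int_{A\setminus B}(u_n-u),
\]
the first integral tends to zero by uniform convergence on the finite-measure set $B$, while the second is bounded by $(\|u_n\|_q+\|u\|_q)\,|A\setminus B|^{1/q'}$ via H\"older's inequality with $q'=q/(q-1)$. Both contributions can be made arbitrarily small.

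Once $v=u$ is established, a standard subsequence argument closes the proof: every subsequence of $\{u_n\}$ has a further subsequence weakly convergent to $u$ by the same reasoning, so the whole sequence satisfies $u_n\rightharpoonup u$ in $L^q(\Omega)$. The main technical step is the identification of the weak limit via Egorov; everything else is routine functional analysis. The hypothesis $q>1$ enters in two essential places, namely reflexivity of $L^q(\Omega)$ (which provides the weakly convergent subsequence) and the H\"older bound controlling the tail integral on $A\setminus B$, and indeed the conclusion fails at $q=1$ (as concentrating bump sequences show).
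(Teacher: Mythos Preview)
Your argument is correct. The paper does not actually prove this lemma; it simply cites the references \cite{Bogachev 2007} and \cite{Willem 2013} and moves on, so there is no in-text proof to compare against. Your route---extract a weakly convergent subsequence by reflexivity, identify the limit via Egorov's theorem and H\"older on finite-measure sets, then upgrade to the full sequence by the subsequence principle---is one of the standard proofs of this fact. A minor observation: your Egorov/H\"older step in fact shows $\int_A u_n\to\int_A u$ for the \emph{full} sequence and every finite-measure $A$, which (together with boundedness of $\{u_n\}$ in $L^q$ and density of simple functions in $L^{q'}$) already yields $u_n\rightharpoonup u$ directly; the reflexivity and subsequence machinery are then redundant, though not incorrect.
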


The following fact will be frequently used in this paper.

\begin{lemma}\label{lem1.7}
Let $N\geq 3$,  $q\in [2,2N/(N-2)]$ and $u\in H^1(\mathbb{R}^N)$.
Then there exists a positive constant $C$ independent of $q$ and $u$
such that
\begin{equation*}
\|u\|_q\leq C\|u\|_{H^1(\mathbb{R}^N)}.
\end{equation*}
\end{lemma}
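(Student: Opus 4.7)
The plan is to derive the inequality by interpolation between the endpoints $q=2$ and $q=2^{*}=\frac{2N}{N-2}$, and then control both endpoint norms by $\|u\|_{H^1(\mathbb{R}^N)}$ via the Sobolev embedding. The only subtle point is ensuring that the constant does not blow up as $q$ approaches either endpoint, so I would avoid any estimate whose coefficients depend on the interpolation exponent.

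First, given $q\in[2,2^{*}]$, choose $\theta=\theta(q)\in[0,1]$ so that
\[
\frac{1}{q}=\frac{1-\theta}{2}+\frac{\theta}{2^{*}}.
\]
By the standard interpolation inequality for Lebesgue norms,
\[
\|u\|_{q}\leq \|u\|_{2}^{1-\theta}\|u\|_{2^{*}}^{\theta}.
\]
Next, to eliminate $\theta$ from the constant, I would apply the elementary inequality $a^{1-\theta}b^{\theta}\leq (1-\theta)a+\theta b\leq a+b$ valid for $a,b\geq 0$ and $\theta\in[0,1]$ (a direct consequence of the concavity of $\log$, i.e.\ Young's inequality in its weighted-AM/GM form). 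This gives
\[
\|u\|_{q}\leq \|u\|_{2}+\|u\|_{2^{*}},
\]
with no $\theta$-dependence left.

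Finally, I invoke the two well-known estimates $\|u\|_{2}\leq \|u\|_{H^{1}(\mathbb{R}^N)}$ (trivial from the definition of the $H^{1}$-norm) and the critical Sobolev embedding $\|u\|_{2^{*}}\leq C_{S}\|\nabla u\|_{2}\leq C_{S}\|u\|_{H^{1}(\mathbb{R}^N)}$, where $C_{S}=C_{S}(N)$ is the usual Sobolev constant depending only on the dimension. Combining these yields
\[
\|u\|_{q}\leq (1+C_{S})\|u\|_{H^{1}(\mathbb{R}^N)},
\]
and the constant $C:=1+C_{S}$ depends only on $N$, hence is independent of both $q$ and $u$, as required.

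The argument is essentially routine; the only thing to watch is the choice of the interpolation estimate at the intermediate step, since the more familiar weighted version $a^{1-\theta}b^{\theta}\leq(1-\theta)a+\theta b$ is fine, but a version like $\tfrac{1}{p}a^{p}+\tfrac{1}{p'}b^{p'}$ with $p=1/(1-\theta)$ would introduce $q$-dependent coefficients and defeat the uniformity. With the chosen form, the proof is a one-line interpolation followed by two endpoint embeddings.
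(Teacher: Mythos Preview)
Your proof is correct. The paper states this lemma without proof, treating it as a standard fact; your interpolation argument between $L^2$ and $L^{2^*}$ followed by the weighted AM--GM bound $a^{1-\theta}b^{\theta}\le a+b$ and the Sobolev embedding is exactly the natural way to supply the missing details, and your care in avoiding $\theta$-dependent constants is precisely what is needed for the uniformity in $q$.
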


Next, we will show that there exists $u\in
H^1(\mathbb{R}^N)\setminus \{0\}$ such that $P_{p,q}(u)=0$. Thus,
$c_{p,q}$ is well defined. To this end, we first give an elementary
lemma, see \cite{Li-Ma-Zhang} for its proof.

\begin{lemma}\label{lem1.3}
Let $a>0,\ c>0,\ b\in \mathbb{R}$,\ $n\geq 3$ and $\alpha>0$ be
constants. Define $f:[0,\infty)\to \mathbb{R}$ as
\begin{align*}
f(t)=a t^{n-2}+b t^{n}-c t^{n+\alpha}.
\end{align*}
Then $f$ has a unique critical point which corresponds to its
maximum.
\end{lemma}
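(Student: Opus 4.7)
The plan is to compute the derivative of $f$ on $(0,\infty)$ and reduce the problem to analyzing the zeros of a single auxiliary function. A direct calculation gives
\begin{equation*}
f'(t) = a(n-2)t^{n-3} + bn\, t^{n-1} - c(n+\alpha)\, t^{n+\alpha-1} = t^{n-3}\, g(t),
\end{equation*}
where $g(t) := a(n-2) + bn\, t^{2} - c(n+\alpha)\, t^{\alpha+2}$. Because $n\geq 3$, the factor $t^{n-3}$ is strictly positive for $t>0$, so the sign of $f'$ on $(0,\infty)$ is governed entirely by that of $g$. Moreover $g(0) = a(n-2) > 0$ and $g(t)\to -\infty$ as $t\to\infty$, so $g$ has at least one positive zero by the intermediate value theorem.

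To obtain uniqueness of the zero (and the correct sign change around it), I would split into two cases depending on the sign of $b$. When $b \leq 0$, each of the terms $bn\, t^{2}$ and $-c(n+\alpha)t^{\alpha+2}$ is nonincreasing on $(0,\infty)$ (the second strictly), so $g$ is strictly decreasing, and its unique zero $t_{0}$ satisfies $g>0$ on $(0,t_{0})$ and $g<0$ on $(t_{0},\infty)$. When $b>0$, I would compute
\begin{equation*}
g'(t) = t\bigl[\, 2bn - c(n+\alpha)(\alpha+2)\, t^{\alpha}\,\bigr],
\end{equation*}
which vanishes on $(0,\infty)$ only at $t_{*} := \bigl(\tfrac{2bn}{c(n+\alpha)(\alpha+2)}\bigr)^{1/\alpha}$, is positive on $(0,t_{*})$ and negative on $(t_{*},\infty)$. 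Thus $g$ is strictly increasing on $(0,t_{*})$ and strictly decreasing on $(t_{*},\infty)$. Since $g(0)>0$, $g$ stays positive on the entire interval $(0,t_{*}]$, so its only zero must lie in $(t_{*},\infty)$, on which $g$ is strictly monotone; this zero $t_{0}$ is therefore unique, and once more $g>0$ on $(0,t_{0})$ and $g<0$ on $(t_{0},\infty)$.

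Combining the two cases, $f'$ has a unique zero $t_{0}\in(0,\infty)$, is positive on $(0,t_{0})$, and is negative on $(t_{0},\infty)$; hence $t_{0}$ is the unique critical point of $f$ and realizes its global maximum. The only mildly delicate point is the case $b>0$, where $g$ itself is not monotone; the unimodal-with-positive-initial-value analysis above handles it cleanly without any further input.
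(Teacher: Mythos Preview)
Your proof is correct. The paper does not supply its own argument for this lemma; it simply cites the companion paper \cite{Li-Ma-Zhang}. Your elementary approach---factoring $f'(t)=t^{n-3}g(t)$ and then analyzing the zeros of $g$ by a case split on the sign of $b$ (monotonicity when $b\le 0$, unimodality of $g$ when $b>0$)---is exactly the natural route, is self-contained, and would serve as a complete replacement for the citation.
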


For any function $u(x)$ and $\tau\geq 0$, define $u_\tau:
\mathbb{R}^{N}\to \mathbb{R}$ by
\begin{align}\label{e1.29}
u_{\tau}(x)=\left\{\begin{array}{cc}
u(x/\tau),&\tau>0,\\
0,& \tau=0.
\end{array}
 \right.
\end{align}
Then we have the following result.

\begin{lemma}\label{lem1.2}
Let $N\geq 3$, $\alpha\in (0,N)$, $p\in[\underline{p},\bar{p}]$ and
$q\in (2,\bar{q}]$. Then for every $u\in
H^{1}(\mathbb{R}^N)\setminus \{0\}$, there exists a unique
$\tau_0>0$ such that $P_{p,q}(u_{\tau_0})=0$. Moreover,
$J_{p,q}(u_{\tau_0})=\max_{\tau\geq 0}J_{p,q}(u_{\tau})$.
\end{lemma}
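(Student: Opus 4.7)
My plan is to reduce the problem to a one-variable optimization problem via the scaling, and then invoke Lemma \ref{lem1.3} directly.

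First, I would compute the effect of the dilation $u_\tau(x)=u(x/\tau)$ on each term of $J_{p,q}$ by a straightforward change of variables. The standard computations give
\begin{equation*}
\|\nabla u_\tau\|_2^2=\tau^{N-2}\|\nabla u\|_2^2,\quad \|u_\tau\|_2^2=\tau^{N}\|u\|_2^2,\quad \|u_\tau\|_q^q=\tau^N\|u\|_q^q,
\end{equation*}
and
\begin{equation*}
\int_{\mathbb{R}^N}(I_\alpha\ast|u_\tau|^p)|u_\tau|^p=\tau^{N+\alpha}\int_{\mathbb{R}^N}(I_\alpha\ast|u|^p)|u|^p.
\end{equation*}
Setting $A:=\tfrac12\|\nabla u\|_2^2$, $B:=\tfrac12\|u\|_2^2-\tfrac{\lambda}{q}\|u\|_q^q$ and $C:=\tfrac{\mu}{2p}\int(I_\alpha\ast|u|^p)|u|^p$, this yields the explicit expression
\begin{equation*}
J_{p,q}(u_\tau)=A\tau^{N-2}+B\tau^{N}-C\tau^{N+\alpha},\qquad \tau>0.
\end{equation*}

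Next, I would observe that $A>0$ (since $u\in H^1(\mathbb{R}^N)\setminus\{0\}$ cannot be a nonzero constant) and $C>0$ (since $|u|^p$ is nontrivial and nonnegative and $I_\alpha$ has a positive kernel). The sign of $B$ is immaterial. Therefore Lemma \ref{lem1.3}, applied with $n=N$, produces a unique critical point $\tau_0>0$ of the map $\tau\mapsto J_{p,q}(u_\tau)$, and this critical point is its global maximum on $(0,\infty)$; since $J_{p,q}(u_0)=J_{p,q}(0)=0$ and $J_{p,q}(u_{\tau_0})>0$ (as the function is positive for small $\tau$), the maximum is also attained on $[0,\infty)$.

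Finally, I would translate the critical point condition into the Pohožaev identity. A direct differentiation of the scaled functional gives
\begin{equation*}
\tau\,\frac{d}{d\tau}J_{p,q}(u_\tau)=P_{p,q}(u_\tau)
\end{equation*}
for every $\tau>0$, so $\tau_0$ is the unique positive number for which $P_{p,q}(u_{\tau_0})=0$, and $J_{p,q}(u_{\tau_0})=\max_{\tau\geq 0}J_{p,q}(u_\tau)$, as required. There is no substantial obstacle here: the only thing that needs care is verifying the scaling identities and confirming the strict positivity of the coefficients $A$ and $C$, after which Lemma \ref{lem1.3} closes the argument.
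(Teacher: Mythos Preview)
Your argument is correct and follows essentially the same route as the paper: you compute the scaled functional $J_{p,q}(u_\tau)=A\tau^{N-2}+B\tau^{N}-C\tau^{N+\alpha}$, invoke Lemma~\ref{lem1.3} to obtain a unique critical point that is the global maximum, and then identify $\tau\frac{d}{d\tau}J_{p,q}(u_\tau)$ with $P_{p,q}(u_\tau)$. The only addition over the paper's version is that you make explicit the positivity of $A$ and $C$ and the relation $\tau\,\varphi'(\tau)=P_{p,q}(u_\tau)$, which the paper leaves implicit.
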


\begin{proof}
By direct calculation, we have
\begin{equation}\label{e1.30}
\begin{split}
\varphi(\tau):=J_{p,q}(u_\tau)=&\frac{1}{2}\tau^{N-2}\int_{\mathbb{R}^N}|\nabla u|^2+\frac{1}{2}\tau^{N}\int_{\mathbb{R}^N}|u|^2  \\
                       &\quad \quad -\frac{\mu}{2p}\tau^{N+\alpha}\int_{\mathbb{R}^N}(I_{\alpha}\ast|u|^{p})|u|^{p}-\frac{\lambda}{q}\tau^{N}\int_{\mathbb{R}^N}|u|^{q}.
\end{split}
\end{equation}
By Lemma \ref{lem1.3}, $\varphi(\tau)$ has a unique critical point
$\tau_0$ which corresponding to its maximum. Hence,
$J_{p,q}(u_{\tau_0})=\max_{\tau\geq 0}J_{p,q}(u_{\tau})$ and
\begin{equation*}
\begin{split}
0=\varphi'(\tau_0)=&\frac{N-2}{2}\tau_0^{N-3}\int_{\mathbb{R}^N}|\nabla
u|^2+\frac{N}{2}\tau_0^{N-1}\int_{\mathbb{R}^N}|u|^2\\
&\quad
-\frac{\mu(N+\alpha)}{2p}\tau_0^{N+\alpha-1}\int_{\mathbb{R}^N}(I_{\alpha}\ast|u|^{p})|u|^{p}-\frac{\lambda
N}{q}\tau_0^{N-1}\int_{\mathbb{R}^N}|u|^{q}.
\end{split}
\end{equation*}
That is, $P_{p,q}(u_{\tau_0})=0$. The proof is complete.
\end{proof}

The following result gives a minimax description of the least energy level in the subcritical case, which is a direct consequence  of  the main results in \cite{Li-Ma-Zhang} and Corollary
\ref{cor1.1}.

\begin{corollary}\label{cor1.2}
Let $N\geq 3, \alpha\in (0,N)$, $p\in
(\frac{N+\alpha}{N},\frac{N+\alpha}{N-2})$ and $q\in
(2,\frac{2N}{N-2})$. Then for every $\mu, \lambda>0$, problem
(\ref{e1.22}) admits a positive groundstate  $u\in
H^{1}(\mathbb{R}^N)$ which is radially symmetric and radially
nonincreasing. Moreover, $c_{p,q}^g=c_{p,q}^{mp}$, where
$c_{p,q}^{g}$ is defined in (\ref{e1.2}) and 
\begin{align}\label{e1.31}
c_{p,q}^{mp}:=\inf_{\gamma\in \Gamma}\sup_{t\in
[0,1]}J_{p,q}(\gamma(t)),
\end{align}
\begin{align}\label{e1.32}
\Gamma=\{\gamma\in C([0,1],H^{1}(\mathbb{R}^N)): \gamma(0)=0\
\textrm{and} \  J_{p,q}(\gamma(1))<0\}.
\end{align}
\end{corollary}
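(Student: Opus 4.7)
My plan is to identify what each cited source contributes and how the pieces fit together, since the corollary is presented as a direct consequence rather than an independent result. First, the main results of [Li-Ma-Zhang] treat the strictly subcritical problem (\ref{e1.22}) and produce a positive, radial, radially nonincreasing solution realising a mountain-pass level, together with a Pohozaev-manifold characterisation. This already supplies every clause of the existence assertion in the corollary.

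The remaining content is the identity $c_{p,q}^g = c_{p,q}^{mp}$. One half, $c_{p,q}^g \leq c_{p,q}^{mp}$, is immediate from [Li-Ma-Zhang] because their mountain-pass solution is a nontrivial critical point, hence admissible in the infimum (\ref{e1.2}). For the reverse inequality $c_{p,q}^{mp} \leq c_{p,q}^g$, I would take any groundstate $u$, invoke Corollary \ref{cor1.1} to conclude $P_{p,q}(u)=0$, and then apply Lemma \ref{lem1.2}: the curve $\tau \mapsto u_\tau$ attains its (unique) maximum of $J_{p,q}$ at $\tau=1$ with value $c_{p,q}^g$, and satisfies $J_{p,q}(u_\tau) \to -\infty$ as $\tau \to \infty$, as is visible from the dominant negative $\tau^{N+\alpha}$ term in (\ref{e1.30}). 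Picking $T$ large enough that $J_{p,q}(u_T)<0$ and setting $\gamma(t) = u_{Tt}$ (with the convention $u_0 := 0$) produces a path in $\Gamma$ whose supremum equals $J_{p,q}(u) = c_{p,q}^g$, giving $c_{p,q}^{mp} \leq c_{p,q}^g$.

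The only delicate point is ensuring that every $H^1$-groundstate, not merely the specific mountain-pass solution built in [Li-Ma-Zhang], satisfies the Pohozaev identity on the whole admissible range of $p$ and $q$. That is precisely what Corollary \ref{cor1.1} supplies, unconditionally and without the regularity restrictions used in [Li-Ma-Zhang]. With that in hand, the argument is essentially organisational: the scaling map of Lemma \ref{lem1.2} does the work of converting a Pohozaev-level identity into a mountain-pass-level bound, and no new PDE analysis or compactness argument is required beyond what is already invoked.
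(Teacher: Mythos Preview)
Your proposal is correct and matches the paper's intended argument: the paper gives no explicit proof, merely stating that the corollary is ``a direct consequence of the main results in \cite{Li-Ma-Zhang} and Corollary~\ref{cor1.1}'', and your reconstruction---existence from \cite{Li-Ma-Zhang}, $c_{p,q}^g\le c_{p,q}^{mp}$ from the mountain-pass critical point, and $c_{p,q}^{mp}\le c_{p,q}^g$ via Corollary~\ref{cor1.1} plus the scaling path of Lemma~\ref{lem1.2}---is exactly the natural way to unpack that sentence. The only point worth adding explicitly is the continuity of $t\mapsto u_{Tt}$ at $t=0$, which follows from $\|u_\tau\|_{H^1}^2=\tau^{N-2}\|\nabla u\|_2^2+\tau^N\|u\|_2^2\to0$ as $\tau\to0^+$; otherwise nothing is missing.
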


Now we are ready to give the relationship of $c_{p,q}$ and
$c_{p,q}^g$.

\begin{lemma}\label{rek1.1}
Let $N\geq 3$ and $\alpha\in (0,N)$. Then $c_{p,q}\leq c_{p,q}^g$ for
$p\in[\underline{p},\bar{p}]$, $q\in (2,\bar{q}]$ and $c_{p,q}=c_{p,q}^g$ for
$p\in(\underline{p},\bar{p})$, $q\in (2,\bar{q})$.
\end{lemma}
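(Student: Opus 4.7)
\medskip

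\noindent\textbf{Proof proposal.} The inequality $c_{p,q}\le c_{p,q}^g$ is essentially built in to the definitions. Any $v\in H^1(\mathbb{R}^N)\setminus\{0\}$ with $J_{p,q}'(v)=0$ is a weak solution of (\ref{e1.22}), so by Corollary \ref{cor1.1} it automatically satisfies the Poho\v{z}aev identity (\ref{e1.23}), i.e.\ $P_{p,q}(v)=0$. Thus the admissible set in the definition of $c_{p,q}^g$ is contained in the admissible set in the definition of $c_{p,q}$, and the first inequality follows directly. This works for the full range $p\in[\underline{p},\bar p]$, $q\in(2,\bar q]$.

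For the reverse inequality in the subcritical range $p\in(\underline p,\bar p)$, $q\in(2,\bar q)$, the plan is to compare both quantities with the mountain-pass level. By Corollary \ref{cor1.2} we already have $c_{p,q}^g=c_{p,q}^{mp}$, so it suffices to establish $c_{p,q}^{mp}\le c_{p,q}$. Pick an arbitrary $v\in H^1(\mathbb{R}^N)\setminus\{0\}$ with $P_{p,q}(v)=0$; the goal is to exhibit a path in $\Gamma$ (see (\ref{e1.32})) along which the supremum of $J_{p,q}$ equals $J_{p,q}(v)$.

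I would proceed via the dilation family $v_\tau$ from (\ref{e1.29}). By Lemma \ref{lem1.2}, the function $\varphi(\tau)=J_{p,q}(v_\tau)$ attains its unique maximum at a single $\tau_0>0$ characterized by $P_{p,q}(v_{\tau_0})=0$; since $v=v_1$ already lies on the Poho\v{z}aev set, the uniqueness statement forces $\tau_0=1$, and hence $J_{p,q}(v)=\max_{\tau\ge 0}J_{p,q}(v_\tau)$. Inspecting (\ref{e1.30}), the negative term $-\tfrac{\mu}{2p}\tau^{N+\alpha}\int(I_\alpha\ast|v|^p)|v|^p$ has the strictly largest exponent, so $\varphi(\tau)\to-\infty$ as $\tau\to\infty$; fix $R>1$ large enough that $\varphi(R)<0$ and set $\gamma(t):=v_{Rt}$ for $t\in[0,1]$. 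The map $\tau\mapsto v_\tau$ is continuous from $[0,\infty)$ into $H^1(\mathbb{R}^N)$ (since $\|\nabla v_\tau\|_2^2=\tau^{N-2}\|\nabla v\|_2^2$ and $\|v_\tau\|_2^2=\tau^N\|v\|_2^2$ are continuous and vanish at $\tau=0$), so $\gamma\in C([0,1],H^1(\mathbb{R}^N))$ with $\gamma(0)=0$ and $J_{p,q}(\gamma(1))<0$; thus $\gamma\in\Gamma$. Consequently
\begin{equation*}
c_{p,q}^{mp}\le \sup_{t\in[0,1]}J_{p,q}(\gamma(t))=\max_{\tau\in[0,R]}\varphi(\tau)=\varphi(1)=J_{p,q}(v).
\end{equation*}
Taking the infimum over admissible $v$ yields $c_{p,q}^{mp}\le c_{p,q}$, and combining with the first step and Corollary \ref{cor1.2} gives $c_{p,q}=c_{p,q}^g$.

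There is no serious obstacle here; everything is bookkeeping on top of Corollaries \ref{cor1.1}--\ref{cor1.2} and Lemma \ref{lem1.2}. The only points that require a moment of care are the continuity of the dilation at $\tau=0$ and the observation that the $\tau^{N+\alpha}$ term in $\varphi(\tau)$ is what forces $\varphi(\tau)\to-\infty$ (ensuring the mountain-pass geometry is available uniformly in the allowed range of $p$ and $q$).
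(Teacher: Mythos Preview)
Your proposal is correct and follows essentially the same route as the paper: the inequality $c_{p,q}\le c_{p,q}^g$ comes from Corollary \ref{cor1.1}, and the reverse inequality in the subcritical range is obtained by running the dilation path $v_\tau$ through the mountain-pass level via Lemma \ref{lem1.2} and Corollary \ref{cor1.2}. The paper's write-up is terser (it does not spell out the explicit path $\gamma(t)=v_{Rt}$ or the identification $\tau_0=1$), but the argument is the same.
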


\begin{proof}
The first assertion follows from Corollary \ref{cor1.1}. To prove the second assertion,
for any $u\in H^1(\mathbb{R}^N)\setminus \{0\}$ with $P_{p,q}(u)=0$, let
$u_\tau$ be defined in (\ref{e1.29}). By (\ref{e1.30}), there exists
$\tau_0>0$ large enough such that $J_{p,q}(u_{\tau_0})<0$. Corollary \ref{cor1.2} and Lemma \ref{lem1.2} imply that
\begin{equation*}
c_{p,q}^{mp}\leq \max_{\tau\geq 0}J_{p,q}(u_\tau)=J_{p,q}(u).
\end{equation*}
Since $u$ is arbitrary, we obtain that $c_{p,q}^g=c_{p,q}^{mp}\leq
c_{p,q}$ and hence $c_{p,q}=c_{p,q}^g$ for
$p\in(\underline{p},\bar{p})$ and $q\in (2,\bar{q})$.
\end{proof}

The following several  lemmas are concerned with  the properties of $c_{p,q}$.

\begin{lemma}\label{lem1.4}
Assume that $N\geq 3$, $\alpha\in(0,N)$,
$p\in[\underline{p},\bar{p}]$ and $q\in (2,\bar{q}]$. Then for every
$\mu, \lambda>0$, $c_{p,q}\geq 0$.
\end{lemma}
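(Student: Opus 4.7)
\textbf{Proof proposal for Lemma~\ref{lem1.4}.} The plan is to exploit the scaling $u_\tau(x)=u(x/\tau)$ introduced in (\ref{e1.29}) and the fact, already established in Lemma~\ref{lem1.2}, that for every nonzero $u\in H^1(\mathbb{R}^N)$ the map $\varphi(\tau):=J_{p,q}(u_\tau)$ has a unique critical point on $(0,\infty)$ at which it attains its global maximum. The strategy is to show that whenever $P_{p,q}(u)=0$, this maximum is achieved precisely at $\tau=1$, and to compare $\varphi(1)$ with $\varphi(0)=0$.

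First, I would differentiate the explicit formula (\ref{e1.30}) term by term and match with the definition of $P_{p,q}$. A direct computation gives
\begin{equation*}
\varphi'(\tau)=\frac{1}{\tau}P_{p,q}(u_\tau)\quad\text{for all }\tau>0,
\end{equation*}
which is the key algebraic identity linking the Pohožaev functional to the scaling. In particular, if $u\in H^1(\mathbb{R}^N)\setminus\{0\}$ satisfies $P_{p,q}(u)=0$, then $\varphi'(1)=0$, so that $\tau=1$ is a critical point of $\varphi$ on $(0,\infty)$.

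Next, I would invoke Lemma~\ref{lem1.2} (which rests on Lemma~\ref{lem1.3} applied with $a=\tfrac{1}{2}\|\nabla u\|_2^2>0$, $b=\tfrac{1}{2}\|u\|_2^2-\tfrac{\lambda}{q}\|u\|_q^q\in\mathbb{R}$, $c=\tfrac{\mu}{2p}\int(I_\alpha\ast|u|^p)|u|^p>0$, $n=N$): the unique critical point of $\varphi$ on $(0,\infty)$ corresponds to its maximum. Combining this with $\varphi'(1)=0$ shows that
\begin{equation*}
J_{p,q}(u)=\varphi(1)=\max_{\tau\geq 0}\varphi(\tau)\geq \varphi(0)=J_{p,q}(0)=0.
\end{equation*}
Since $u$ was arbitrary on the Pohožaev constraint, taking the infimum yields $c_{p,q}\geq 0$, as desired.

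I do not anticipate a genuine obstacle here: the entire argument is a one-line consequence of Lemma~\ref{lem1.2} once the identity $\varphi'(\tau)=\tau^{-1}P_{p,q}(u_\tau)$ is noted. The only mild point requiring care is verifying the sign conditions on the coefficients so that Lemma~\ref{lem1.3} applies; this is automatic because $u\neq 0$ forces both $\|\nabla u\|_2^2>0$ and $\int(I_\alpha\ast|u|^p)|u|^p>0$, while the coefficient of $\tau^N$ is allowed to have either sign.
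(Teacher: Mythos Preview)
Your proof is correct but follows a different route from the paper. The paper proceeds by a one-line algebraic identity: for any $u$ with $P_{p,q}(u)=0$,
\[
J_{p,q}(u)=J_{p,q}(u)-\frac{1}{N}P_{p,q}(u)=\frac{1}{N}\int_{\mathbb{R}^N}|\nabla u|^2+\frac{\mu\alpha}{2Np}\int_{\mathbb{R}^N}(I_{\alpha}\ast|u|^{p})|u|^{p}\geq 0,
\]
and passes to the infimum. Your argument instead exploits the scaling geometry: the identity $\varphi'(\tau)=\tau^{-1}P_{p,q}(u_\tau)$ together with Lemma~\ref{lem1.2} forces $\tau=1$ to be the global maximizer of $\tau\mapsto J_{p,q}(u_\tau)$, whence $J_{p,q}(u)\geq \varphi(0)=0$. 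The paper's approach is slightly more economical (it does not invoke Lemma~\ref{lem1.2} or Lemma~\ref{lem1.3}) and, more importantly, yields the explicit nonnegative decomposition of $J_{p,q}$ on the Poho\v{z}aev manifold that is reused repeatedly in Section~4 (see e.g.\ the chain (\ref{e1.58}) and the analogous computations in the proofs of Theorems~\ref{thm1.3}--\ref{thm1.6}). Your approach, while perfectly valid for the lemma at hand, does not produce that formula as a by-product.
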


\begin{proof}
Let $\{u_n\}\subset H^1(\mathbb{R}^N)\setminus \{0\}$ be a sequence
satisfying $\lim_{n\to \infty}J_{p,q}(u_n)=c_{p,q}$ and
$P_{p,q}(u_n)=0$. Then we have
\begin{equation*}
\begin{split}
 J_{p,q}(u_n)&=J_{p,q}(u_n)-\frac{1}{N}P_{p,q}(u_n)\\
                     &=\frac{1}{N}\int_{\mathbb{R}^N}|\nabla
                     u_n|^2+\frac{\mu\alpha}{2Np}\int_{\mathbb{R}^N}(I_{\alpha}\ast|u_n|^{p})|u_n|^{p}\geq 0,
\end{split}
\end{equation*}
which implies that $c_{p,q}\geq 0$.
\end{proof}

\begin{lemma}\label{lem1.5}
Assume that $N\geq 3$, $\alpha\in(0,N)$,
$p\in(\underline{p},\bar{p})$ and $q\in (2,\bar{q})$. Then for every
$\mu, \lambda>0$, $\limsup_{p\to \bar{p}}c_{p,q}\leq c_{\bar{p},q}$,
$\limsup_{p\to \underline{p}}c_{p,q}\leq c_{\underline{p},q}$,
$\limsup_{q\to \bar{q}}c_{p,q}\leq c_{p,\bar{q}}$  and
$\limsup_{p\to \underline{p}, q\to\bar{q}}c_{p,q}\leq
c_{\underline{p}, \bar{q}}$.
\end{lemma}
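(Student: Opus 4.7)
The plan is to prove each of the four upper-semicontinuity statements by a single scheme: take a near-optimizer of the target functional, rescale it by Lemma \ref{lem1.2} to sit on the approximating Pohozaev manifold, and pass to the limit. I describe it in detail for $\limsup_{p \to \bar{p}} c_{p,q} \leq c_{\bar{p},q}$; the other three limits are handled by the same argument with the parameter varied accordingly.

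Fix $\epsilon > 0$. Applying Lemma \ref{lem1.2} at $(\bar{p}, q)$ shows that the constraint set $\{P_{\bar{p},q} = 0\}\cap(H^{1}(\mathbb{R}^N)\setminus\{0\})$ is nonempty, so there is $u \in H^{1}(\mathbb{R}^N)\setminus\{0\}$ with $P_{\bar{p},q}(u) = 0$ and $J_{\bar{p},q}(u) < c_{\bar{p},q} + \epsilon$. For each subcritical $p$ sufficiently close to $\bar{p}$, Lemma \ref{lem1.2} produces a unique $\tau_p > 0$ with $P_{p,q}(u_{\tau_p}) = 0$, and
\begin{equation*}
c_{p,q} \leq J_{p,q}(u_{\tau_p}) = \max_{\tau \geq 0}\varphi_p(\tau), \qquad \varphi_p(\tau) := J_{p,q}(u_\tau).
\end{equation*}
The task reduces to showing $\max_{\tau\geq 0}\varphi_p(\tau) \to J_{\bar{p},q}(u)$ as $p \to \bar{p}$.

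First I would verify continuity of the nonlocal coefficient in $p$. Since $u \in L^{2}(\mathbb{R}^N)\cap L^{2N/(N-2)}(\mathbb{R}^N)$, the estimate $|u|^p \leq |u|^{\underline{p}} + |u|^{\bar{p}}$ (valid pointwise for every $p\in[\underline{p},\bar{p}]$, splitting according to $|u|\leq 1$ or $|u|>1$) gives a fixed dominating function in $L^{2N/(N+\alpha)}(\mathbb{R}^N)$. Hence $|u|^p \to |u|^{\bar{p}}$ in $L^{2N/(N+\alpha)}(\mathbb{R}^N)$ by dominated convergence, and bilinearity of the Hardy-Littlewood-Sobolev pairing (Lemma \ref{lem HLS}) yields $\int(I_\alpha \ast |u|^p)|u|^p \to \int(I_\alpha \ast |u|^{\bar{p}})|u|^{\bar{p}}$. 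Combined with the explicit formula
\begin{equation*}
\varphi_p(\tau) = \tfrac{1}{2}\|\nabla u\|_2^2\,\tau^{N-2} + \bigl(\tfrac{1}{2}\|u\|_2^2 - \tfrac{\lambda}{q}\|u\|_q^q\bigr)\tau^N - \tfrac{\mu}{2p}\Bigl(\int(I_\alpha\ast|u|^p)|u|^p\Bigr)\tau^{N+\alpha},
\end{equation*}
this delivers $\varphi_p \to \varphi_{\bar{p}}$ uniformly on every compact subset of $[0,\infty)$.

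Next I would confine $\{\tau_p\}$ to a bounded interval. The coefficient of $-\tau^{N+\alpha}$ in $\varphi_p$ tends to the strictly positive number $\frac{\mu}{2\bar{p}}\int(I_\alpha\ast|u|^{\bar{p}})|u|^{\bar{p}}$, so one can fix $R > 1$ (independent of $p$ near $\bar{p}$) with $\varphi_p(\tau) < 0$ for all $\tau \geq R$. On the other hand $\varphi_p(\tau_p) \geq \varphi_p(1) \to J_{\bar{p},q}(u) \geq 0$ by Lemma \ref{lem1.4}, which forces $\tau_p \in [0, R]$ for $p$ close to $\bar{p}$. Using uniform convergence on $[0,R]$ together with $\max_{[0,R]}\varphi_{\bar{p}} = \varphi_{\bar{p}}(1) = J_{\bar{p},q}(u)$ (since $P_{\bar{p},q}(u) = 0$), one concludes
\begin{equation*}
\limsup_{p \to \bar{p}} c_{p,q} \leq \lim_{p \to \bar{p}}\max_{\tau \in [0,R]}\varphi_p(\tau) = J_{\bar{p},q}(u) < c_{\bar{p},q} + \epsilon,
\end{equation*}
and letting $\epsilon \downarrow 0$ proves the first assertion. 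The remaining three limits are identical: for $q \to \bar{q}$ the majorant $|u|^q \leq |u|^2 + |u|^{\bar{q}} \in L^1(\mathbb{R}^N)$ handles continuity of the local term; for $p \to \underline{p}$ the same HLS-side argument applies; and for the joint limit $(p,q) \to (\underline{p}, \bar{q})$ both continuities are used in tandem. The main technical obstacle throughout is the uniform tail bound that keeps $\tau_p$ in a fixed compact interval, for without it the pointwise convergence of $\varphi_p$ cannot be promoted to convergence of $\max_\tau\varphi_p$.
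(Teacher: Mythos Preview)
Your proposal is correct and follows essentially the same strategy as the paper: choose a near-optimizer $u$ on the limiting Poho\v{z}aev manifold, use dominated convergence (with the pointwise majorant $|u|^{\underline p}+|u|^{\bar p}$, respectively $|u|^2+|u|^{\bar q}$) to get uniform convergence of the rescaled energies $\varphi_p$ on compact $\tau$-intervals, confine the maximizer $\tau_p$ to such an interval, and pass to the limit. The only cosmetic difference is in the packaging of the confinement step---the paper fixes $\tau_0$ with $J_{\text{limit}}(u_{\tau_0})\leq -2$ and then compares $J_{p,q}(u_{\tau_{p,q}})$ to $J_{\text{limit}}(u_{\tau_{p,q}})\leq J_{\text{limit}}(u)$ via a uniform $\epsilon$-bound, whereas you argue directly that $\max_{[0,R]}\varphi_p\to\max_{[0,R]}\varphi_{\bar p}$---but the substance is identical.
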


\begin{proof}
We only prove that $\limsup_{p\to \underline{p},
q\to\bar{q}}c_{p,q}\leq c_{\underline{p}, \bar{q}}$ here. The others
can be done similarly.  For any fixed $\epsilon\in(0,1)$, there
exists $u\in H^1(\mathbb{R}^N)\setminus \{0\}$ with
$P_{\underline{p},\bar{q}}(u)=0$ such that
$J_{\underline{p},\bar{q}}(u)<c_{\underline{p},\bar{q}}+\epsilon$.
It follows from (\ref{e1.30}) that there exists $\tau_0>0$ large
enough such that $J_{\underline{p},\bar{q}}(u_{\tau_0})\leq -2$. It
follows from the Young inequality that
\begin{equation}\label{e1.33}
|u|^{p}\leq
\frac{\bar{p}-p}{\bar{p}-\underline{p}}|u|^{\underline{p}}+
\frac{p-\underline{p}}{\bar{p}-\underline{p}}|u|^{\bar{p}}\
\mathrm{and}\ |u|^q\leq \frac{\bar{q}-q}{\bar{q}-2}|u|^{2}+
\frac{q-2}{\bar{q}-2}|u|^{\bar{q}}.
\end{equation}
By the Hardy-Littlewood-Sobolev inequality and the Sobolev embedding
theorem, there exist $C_1, C_2>0$ independent of $u$, such that
\begin{equation}\label{e1.61}
\begin{split}
&\int_{\mathbb{R}^N}\left(I_{\alpha}\ast
|u|^{\underline{p}}\right)|u|^{\underline{p}}\leq C_1
\|u\|_2^{2\underline{p}}\leq C_2\|u\|_{H^1(\mathbb{R}^N)}^{2\underline{p}},\\
&\int_{\mathbb{R}^N}\left(I_{\alpha}\ast
|u|^{\bar{p}}\right)|u|^{\bar{p}}\leq C_1
\|u\|_{2N/(N-2)}^{2\bar{p}}\leq C_2\|u\|_{H^1(\mathbb{R}^N)}^{2\bar{p}},\\
& \int_{\mathbb{R}^N}\left(I_{\alpha}\ast
|u|^{\underline{p}}\right)|u|^{\bar{p}}\leq C_1
\|u\|_2^{\underline{p}}\|u\|_{2N/(N-2)}^{\bar{p}}\leq
C_2\|u\|_{H^1(\mathbb{R}^N)}^{\underline{p}+\bar{p}}.
\end{split}
\end{equation}
Then the Lebesgue dominated convergence theorem implies that
$$\frac{\mu\tau^{N+\alpha}}{2p}\int_{\mathbb{R}^N}\left(I_{\alpha}\ast
|u|^p\right)|u|^p+\frac{\lambda}{q}\tau^{N}\int_{\mathbb{R}^N}|u|^{q}$$
is continuous on $p\in[\underline{p},\bar{p}]$ and $q\in[2,\bar{q}]$
uniformly with $\tau\in [0,\tau_0]$. Hence, there exists $\delta>0$
such that $
|J_{p,q}(u_{\tau})-J_{\underline{p},\bar{q}}(u_{\tau})|<\epsilon $
for $\underline{p}<p<\underline{p}+\delta$,
$\bar{q}-\delta<q<\bar{q}$ and $0\leq \tau\leq \tau_0$, which
implies that $J_{p,q}(u_{\tau_0})\leq -1$ for all
$\underline{p}<p<\underline{p}+\delta$ and
$\bar{q}-\delta<q<\bar{q}$. Since $J_{p,q}(u_{\tau})>0$ for $\tau$
small enough and $J_{p,q}(u_0)=0$ for every
$p\in[\underline{p},\bar{p}]$ and $q\in (2,\bar{q}]$, there exists
$\tau_{p,q}\in (0,\tau_0)$ such that
$\frac{d}{d\tau}\left(J_{p,q}(u_{\tau})\right)\mid_{\tau=\tau_{p,q}}=0$
and then $P_{p,q}(u_{\tau_{p,q}})=0$. By Lemma \ref{lem1.2},
$J_{\underline{p},\bar{q}}(u_{\tau_{p,q}})\leq
J_{\underline{p},\bar{q}}(u)$. Hence,
\begin{equation*}
c_{p,q}\leq J_{p,q}(u_{\tau_{p,q}})\leq
J_{\underline{p},\bar{q}}(u_{\tau_{p,q}})+\epsilon\leq
 J_{\underline{p},\bar{q}}(u)+\epsilon<c_{\underline{p},\bar{q}}+2\epsilon
\end{equation*}
for any $\underline{p}<p<\underline{p}+\delta$ and
$\bar{q}-\delta<q<\bar{q}$. Thus, $\limsup_{p\to \underline{p},
q\to\bar{q}}c_{p,q}\leq c_{\underline{p},\bar{q}}$.
\end{proof}

Let $p\in (\underline{p},\bar{p})$ and $q\in (2,\bar{q})$.
Corollaries \ref{cor1.1} and \ref{cor1.2} and Lemma \ref{rek1.1}
imply that there exists a positive and radially nonincreasing
function sequence $\{u_{p,q}\}\subset H_r^1(\mathbb{R}^N)\setminus
\{0\}$ such that
\begin{equation} \label{e1.34}
J'_{p,q}(u_{p,q})=0,\  J_{p,q}(u_{p,q})=c_{p,q}\ \mathrm{and}\
P_{p,q}(u_{p,q})=0.
\end{equation}
For such a function sequence, we have the following result.

\begin{lemma}\label{lem xiajixian}
Assume that $N\geq 3$, $\alpha\in(0,N)$,
$p\in(\underline{p},\bar{p})$, $q\in (2,\bar{q})$ and
$\{u_{p,q}\}\subset H_r^1(\mathbb{R}^N)\setminus \{0\}$ satisfies
(\ref{e1.34}). Then for every $\mu, \lambda>0$, $\{u_{p,q}\}_{p\to
\underline{p}}$, $\{u_{p,q}\}_{p\to \bar{p}}$ , $\{u_{p,q}\}_{q\to
\bar{q}}$, $\{u_{p,q}\}_{p\to \underline{p},q\to \bar{q}}$ are
bounded in $H^1(\mathbb{R}^N)$ and
$$\liminf_{p\to \underline{p}}c_{p,q}>0,\ \liminf_{p\to \bar{p}}c_{p,q}>0,\ \liminf_{q\to \bar{q}}c_{p,q}>0,\  \liminf_{p\to \underline{p},q\to \bar{q}}c_{p,q}>0.$$
\end{lemma}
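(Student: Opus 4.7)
The plan is to extract two independent pieces of information from the relations in (\ref{e1.34})---one from the Poho\v{z}aev identity $P_{p,q}(u_{p,q})=0$ and one from the equation $J'_{p,q}(u_{p,q})=0$---and combine them by Gagliardo--Nirenberg interpolation, using crucially the uniform upper bound on $c_{p,q}$ provided by Lemma \ref{lem1.5}.

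\textbf{Boundedness in $H^1(\mathbb{R}^N)$.} I would start from the exact identity recorded inside the proof of Lemma \ref{lem1.4},
\[
c_{p,q}=\frac{1}{N}\int_{\mathbb{R}^N}|\nabla u_{p,q}|^2+\frac{\mu\alpha}{2Np}\int_{\mathbb{R}^N}(I_\alpha\ast|u_{p,q}|^p)|u_{p,q}|^p,
\]
which, combined with Lemma \ref{lem1.5}, immediately yields uniform bounds on $\|\nabla u_{p,q}\|_2^2$ and on the nonlocal integral along each of the four limits. To upgrade to a bound on $\|u_{p,q}\|_2^2$, I would rewrite $P_{p,q}(u_{p,q})=0$ as
\[
\frac{N}{2}\|u_{p,q}\|_2^2\leq\frac{\mu(N+\alpha)}{2p}\int_{\mathbb{R}^N}(I_\alpha\ast|u_{p,q}|^p)|u_{p,q}|^p+\frac{\lambda N}{q}\|u_{p,q}\|_q^q
\]
and estimate the local term by Gagliardo--Nirenberg,
\[
\|v\|_q^q\leq C\|v\|_2^{q(1-\theta)}\|\nabla v\|_2^{q\theta},\qquad \theta=\frac{N(q-2)}{2q}.
\]
The critical observation is that $q(1-\theta)=(2N-(N-2)q)/2<2$ whenever $q>2$, and this strict inequality is preserved uniformly in each of the four limits, since $q$ either stays fixed or tends to $\bar q$ and so is bounded away from $2$. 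The resulting inequality $\|u_{p,q}\|_2^2\leq C_1+C_2\|u_{p,q}\|_2^{q(1-\theta)}$ with $q(1-\theta)<2$ uniform then yields the required bound on $\|u_{p,q}\|_2$.

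\textbf{Strict positivity of the limit inferior.} I would first establish a uniform lower bound $\|u_{p,q}\|_{H^1}\geq c_0>0$. Testing $J'_{p,q}(u_{p,q})=0$ against $u_{p,q}$ gives
\[
\|u_{p,q}\|_{H^1}^2=\mu\int_{\mathbb{R}^N}(I_\alpha\ast|u_{p,q}|^p)|u_{p,q}|^p+\lambda\|u_{p,q}\|_q^q,
\]
and the Hardy--Littlewood--Sobolev inequality (Lemma \ref{lem HLS}) together with Lemma \ref{lem1.7} bounds the right hand side by $C(\|u_{p,q}\|_{H^1}^{2p}+\|u_{p,q}\|_{H^1}^q)$ with constants uniform on compact subsets of $[\underline p,\bar p]\times[2,\bar q]$; since $2p>2$ and $q>2$, this forces the uniform lower bound. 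Supposing for contradiction that $c_{p_n,q_n}\to 0$ along one of the four limits, the identity in the first step forces $\|\nabla u_{p_n,q_n}\|_2\to 0$ and the nonlocal integral to vanish, whence $\|u_{p_n,q_n}\|_2\geq c_0/2$ eventually. The Poho\v{z}aev estimate combined with Gagliardo--Nirenberg then yields
\[
\|u_{p_n,q_n}\|_2^{2-q_n(1-\theta_n)}\leq o(1)+C\|\nabla u_{p_n,q_n}\|_2^{q_n\theta_n}\longrightarrow 0,
\]
because $q_n\theta_n=N(q_n-2)/2$ and $2-q_n(1-\theta_n)$ both stay bounded away from $0$; this contradicts $\|u_{p_n,q_n}\|_2\geq c_0/2$ and finishes the argument.

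The main obstacle will be keeping every constant and exponent uniform across the four limiting regimes. The case $q\to\bar q$ (where $\theta\uparrow 1$ and Gagliardo--Nirenberg collapses to Sobolev) and the cases $p\to\underline p$ or $p\to\bar p$ (where the HLS endpoint exponents appear) both need care, but since $p$ and $q$ vary in compact sets the relevant interpolation and embedding constants remain bounded by continuity, which is what makes the scheme work uniformly.
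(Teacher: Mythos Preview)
Your argument is correct, but it takes a more roundabout route than the paper's.  The paper obtains both conclusions in one stroke by combining $J_{p,q}(u_{p,q})=c_{p,q}$ with $\langle J'_{p,q}(u_{p,q}),u_{p,q}\rangle=0$ (rather than with the Poho\v{z}aev identity): subtracting $\tfrac{1}{2p}$ or $\tfrac{1}{q}$ times the Nehari relation from $J_{p,q}$, whichever coefficient is larger, gives
\[
c_{p,q}=\Bigl(\tfrac12-\tfrac{1}{\max(2p,q)}\Bigr)\|u_{p,q}\|_{H^1}^2+\text{(nonnegative remainder)},
\]
so $c_{p,q}\le c_{\bar p,q}+1$ (Lemma \ref{lem1.5}) immediately bounds $\|u_{p,q}\|_{H^1}$, and conversely the uniform lower bound $\|u_{p,q}\|_{H^1}\ge C_5$ (obtained from $P_{p,q}=0$) plugs back in to give $\liminf c_{p,q}>0$ directly.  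No Gagliardo--Nirenberg interpolation and no contradiction argument are needed.

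Your approach instead uses the $J-\tfrac{1}{N}P$ decomposition, which only controls $\|\nabla u_{p,q}\|_2$ and the nonlocal term, so you are forced to recover the $L^2$ bound via Gagliardo--Nirenberg and the Poho\v{z}aev relation, and then run the positivity step by contradiction.  This is a perfectly valid variant---your bookkeeping on the exponents $q(1-\theta)=(2N-(N-2)q)/2$ and $q\theta=N(q-2)/2$ is correct and does stay uniform in each of the four regimes---but it is noticeably heavier than the paper's one-line identity.  The trade-off is that the paper's trick depends on knowing the Nehari relation is available (which it is here, since $u_{p,q}$ is an actual solution), whereas your scheme would still work if one only had $P_{p,q}(u_{p,q})=0$ and the value $J_{p,q}(u_{p,q})=c_{p,q}$.
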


\begin{proof}
We only prove this lemma for $p\to \bar{p}$. The others can be done
similarly. By Lemma \ref{lem1.5}, for $p\to \bar{p}$, we get that
\begin{equation}\label{e1.36}
\begin{split}
c_{\bar{p},q}+1\geq c_{p,q}&=J_{p,q}(u_{p,q})-\frac{1}{2p}\langle J'_{p,q}(u_{p,q}),u_{p,q}\rangle\\
                     &=\left(\frac{1}{2}-\frac{1}{2p}\right)\int_{\mathbb{R}^N}|\nabla u_{p,q}|^2
+|u_{p,q}|^2+\left(\frac{1}{2p}-\frac{1}{q}\right)\lambda\int_{\mathbb{R}^N}|u_{p,q}|^{q}
\end{split}
\end{equation}
for $q\geq 2p$, and
\begin{equation}\label{e1.35}
\begin{split}
c_{\bar{p},q}+1\geq c_{p,q}&=J_{p,q}(u_{p,q})-\frac{1}{q}\langle J'_{p,q}(u_{p,q}),u_{p,q}\rangle\\
                     &=\left(\frac{1}{2}-\frac{1}{q}\right)\int_{\mathbb{R}^N}|\nabla u_{p,q}|^2
+|u_{p,q}|^2\\
&\quad \quad \quad
+\left(\frac{1}{q}-\frac{1}{2p}\right)\mu\int_{\mathbb{R}^N}(I_\alpha\ast|u_{p,q}|^{p})|u_{p,q}|^{p}
\end{split}
\end{equation}
for $q\leq 2p$. Thus, the sequence $\{u_{p,q}\}$ is bounded in
$H^{1}(\mathbb{R}^N)$ for $p\to \bar{p}$.

By (\ref{e1.33}), (\ref{e1.61}), the Cauchy inequality and the
Sobolev imbedding theorem, for $p\to \bar{p}$, there exists $C_3,
C_4>0$ such that
\begin{equation*}
\begin{split}
0=P_{p,q}(u_{p,q})&=\frac{N-2}{2}\int_{\mathbb{R}^N}|\nabla
u_{p,q}|^2
+\frac{N}{2}\int_{\mathbb{R}^N}|u_{p,q}|^2\\
&\quad\quad
-\frac{\mu(N+\alpha)}{2p}\int_{\mathbb{R}^N}\left(I_{\alpha}\ast
|u_{p,q}|^{p}\right)|u_{p,q}|^{p}-\frac{\lambda N}{q}\int_{\mathbb{R}^N}|u_{p,q}|^{q}\\
&\geq
C_3\|u_{p,q}\|_{H^1(\mathbb{R}^N)}^{2}-C_4\left(\|u_{p,q}\|_{H^1(\mathbb{R}^N)}^{2\underline{p}}+\|u_{p,q}\|_{H^1(\mathbb{R}^N)}^{2\bar{p}}+\|u_{p,q}\|_{H^1(\mathbb{R}^N)}^{q}\right),
\end{split}
\end{equation*}
which implies that there exists $C_5>0$ such that
\begin{equation}\label{e1.38}
\|u_{p,q}\|_{H^1(\mathbb{R}^N)}\geq C_5.
\end{equation}
Combining (\ref{e1.36}), (\ref{e1.35}) and (\ref{e1.38}), we obtain
that $\liminf_{p\to \bar{p}}c_{p,q}>0$.
\end{proof}

Lemmas \ref{lem1.5} and \ref{lem xiajixian} imply that
$c_{\bar{p},q}, c_{\underline{p},q}, c_{p,\bar{q}},
c_{\underline{p},\bar{q}}>0$. In the following, we will give the
upper estimates of $c_{\bar{p},q}, c_{\underline{p},q},
c_{p,\bar{q}}$ and $c_{\underline{p},\bar{q}}$ in four lemmas. To
the end, for any $\epsilon, \delta>0$, we define
\begin{equation}\label{e1.57}
u_\epsilon(x)=\varphi(x)U_\epsilon(x),\  v_\delta(x)=\delta^{\frac{N}{2}}V(\delta x),
\end{equation}
where $\varphi(x) \in C_c^{\infty}(\mathbb{R}^N)$ is a cut off
function satisfying: (a) $0\leq \varphi(x)\leq 1$ for any $x\in
\mathbb{R}^N$; (b) $\varphi(x)\equiv 1$ in $B_1$; (c)
$\varphi(x)\equiv 0$ in $\mathbb{R}^N\setminus \overline{B_2}$.
Here, $B_s$ denotes the ball in $\mathbb{R}^N$ of center at origin
and radius $s$.
\begin{equation*}
U_\epsilon(x)=\frac{\left(N(N-2)\epsilon^2\right)^{\frac{N-2}{4}}}{\left(\epsilon^2+|x|^2\right)^{\frac{N-2}{2}}},
\end{equation*}
where $U_1(x)$ is the extremal function of
\begin{equation*}
\begin{split}
S_\alpha:&=\inf_{ u\in
D^{1,2}(\mathbb{R}^N)\setminus\{0\}}\frac{\int_{\mathbb{R}^N}|\nabla
u|^2}{\left(\int_{\mathbb{R}^N}\left(I_{\alpha}\ast
|u|^{\bar{p}}\right)|u|^{\bar{p}}\right)^{\frac{1}{\bar{p}}}}.
\end{split}
\end{equation*}
In \cite{Gao-Yang-1}, they proved that
$S_\alpha=\frac{S}{(A_\alpha(N) C_\alpha(N))^{\frac{1}{\bar{p}}}}$,
where $A_\alpha(N)$ is defined in (\ref{e1.37}), $C_\alpha(N)$ is in
Lemma \ref{lem HLS} and
\begin{equation*}
S:=\inf_{ u\in
D^{1,2}(\mathbb{R}^N)\setminus\{0\}}\frac{\int_{\mathbb{R}^N}|\nabla
u|^2}{\left(\int_{\mathbb{R}^N}|u|^{\frac{2N}{N-2}}\right)^{\frac{N-2}{N}}}.
\end{equation*}
 $V(x)=\frac{A}{(1+|x|^2)^{N/2}}$ is the extremal
functions of $S_1$, where
\begin{equation}\label{e2.6}
S_1=\inf_{u\in H^1(\mathbb{R}^N)\setminus
\{0\}}\frac{\int_{\mathbb{R}^N}|
u|^2}{\left(\int_{\mathbb{R}^N}\left(I_{\alpha}\ast
|u|^{\underline{p}}\right)|u|^{\underline{p}}\right)^{\frac{1}{\underline{p}}}}.
\end{equation}
See \cite{Seok 2018}. In the following, we
choose $A$ such that $\int_{\mathbb{R}^N}\left(I_{\alpha}\ast
|V|^{\underline{p}}\right)|V|^{\underline{p}}=1$.

By \cite{Brezis-Nirenberg 1983} (see also \cite{Willem 1996}), we
have the following estimates.
\begin{equation}\label{e1.118}
\int_{\mathbb{R}^N}|\nabla
u_\epsilon|^2=S^{\frac{N}{2}}+O(\epsilon^{N-2}),\ N\geq 3,
\end{equation}
\begin{equation}\label{e1.119}
\int_{\mathbb{R}^N}|
u_\epsilon|^{2N/(N-2)}=S^{\frac{N}{2}}+O(\epsilon^N),\ N\geq 3,
\end{equation}
and
\begin{equation}\label{e1.120}
\int_{\mathbb{R}^N}| u_\epsilon|^2=\left\{\begin{array}{ll}
K_2\epsilon^2+O(\epsilon^{N-2}),& N\geq 5,\\
K_2\epsilon^2|\ln \epsilon|+O(\epsilon^2),& N=4,\\
K_2\epsilon+O(\epsilon^2),& N=3,
\end{array}\right.
\end{equation}
where $K_2>0$. By direct calculation, for $p\in
(\underline{p},\bar{p})$ and $q\in (2,\bar{q})$, there exists $K_1,
K_3>0$ such that
\begin{equation}\label{e1.39}
\begin{split}
\int_{\mathbb{R}^N}|u_\epsilon|^q &\geq
(N(N-2))^{\frac{N-2}{4}q}\epsilon^{N-\frac{N-2}{2}q}\int_{B_{\frac{1}{\epsilon}}(0)}\frac{1}{(1+|x|^2)^{\frac{N-2}{2}q}}dx\\
&\geq \left\{\begin{array}{ll}
K_1\epsilon^{N-\frac{N-2}{2}q},& (N-2)q>N,\\
K_1\epsilon^{N-\frac{N-2}{2}q}|\ln \epsilon|,& (N-2)q=N,\\
K_1\epsilon^{\frac{N-2}{2}q},& (N-2)q<N
\end{array}\right.
\end{split}
\end{equation}
and
\begin{equation}\label{e1.47}
\int_{\mathbb{R}^N}\left(I_{\alpha}\ast
|u_\epsilon|^{p}\right)|u_\epsilon|^{p}\geq K_3
\epsilon^{-(N-2)p+N+\alpha}.
\end{equation}
Moreover, similar as in \cite{Gao-Yang-1} and \cite{Gao-Yang-2}, by
direct computation,
\begin{equation}\label{e1.124}
\int_{\mathbb{R}^N}\left(I_\alpha\ast|u_\epsilon|^{\bar{p}}\right)
|u_\epsilon|^{\bar{p}}\geq (A_\alpha(N)
C_\alpha(N))^{\frac{N}2}S_\alpha^{\frac{N+\alpha}2}
+O(\epsilon^{\frac{N+\alpha}{2}}).
\end{equation}

\begin{lemma}\label{lem shangjie}
Let $N\geq 3$, $\alpha\in (0,N)$ and $q\in (2,\bar{q})$.

If $q\in (2,\bar{q})$ for $N\geq 4$ or  $q\in (4,\bar{q})$ for
$N=3$, then for every $\mu, \lambda>0$,
\begin{equation}\label{e1.42}
c_{\bar{p},q}<\frac{2+\alpha}{2(N+\alpha)}\mu^{-\frac{N-2}{2+\alpha}}S_\alpha^{\frac{N+\alpha}{2+\alpha}}.
\end{equation}

If $N=3,\ q\in (2,4]$, then for every $\mu>0$, there exists
$\lambda_0>0$ such that (\ref{e1.42}) holds for $\lambda>\lambda_0$.
\end{lemma}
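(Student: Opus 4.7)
The plan is to build a single test function whose associated Poho\v{z}aev level lies strictly below the threshold $S^{\ast}:=\frac{2+\alpha}{2(N+\alpha)}\mu^{-(N-2)/(2+\alpha)}S_{\alpha}^{(N+\alpha)/(2+\alpha)}$. I would take the truncated Aubin--Talenti bubble $u_\epsilon$ from (\ref{e1.57}) and the family $(u_\epsilon)_\tau$ from (\ref{e1.29}). By Lemma \ref{lem1.2} there is a unique $\tau_\epsilon>0$ with $P_{\bar{p},q}((u_\epsilon)_{\tau_\epsilon})=0$, so $c_{\bar{p},q}\le J_{\bar{p},q}((u_\epsilon)_{\tau_\epsilon})=\max_{\tau\ge 0}\varphi_\epsilon(\tau)$, where $\varphi_\epsilon(\tau):=J_{\bar{p},q}((u_\epsilon)_\tau)$. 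Writing $A_\epsilon=\int|\nabla u_\epsilon|^2$, $E_\epsilon=\int|u_\epsilon|^2$, $B_\epsilon=\int(I_\alpha\ast|u_\epsilon|^{\bar{p}})|u_\epsilon|^{\bar{p}}$ and $D_\epsilon=\int|u_\epsilon|^q$, formula (\ref{e1.30}) gives
\[
\varphi_\epsilon(\tau)=\tilde\varphi_\epsilon(\tau)+\tau^N\left(\frac{E_\epsilon}{2}-\frac{\lambda D_\epsilon}{q}\right),\qquad \tilde\varphi_\epsilon(\tau):=\frac{A_\epsilon}{2}\tau^{N-2}-\frac{\mu B_\epsilon}{2\bar{p}}\tau^{N+\alpha}.
\]

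The key computation, using $(N-2)\bar{p}=N+\alpha$, is that $\tilde\varphi_\epsilon$ is maximised at $\tau_{\ast}^{\,2+\alpha}=A_\epsilon/(\mu B_\epsilon)$ with
\[
\max_{\tau\ge 0}\tilde\varphi_\epsilon=\frac{2+\alpha}{2(N+\alpha)}\mu^{-(N-2)/(2+\alpha)}\frac{A_\epsilon^{(N+\alpha)/(2+\alpha)}}{B_\epsilon^{(N-2)/(2+\alpha)}}.
\]
Substituting the asymptotics (\ref{e1.118}) and (\ref{e1.124}) and simplifying with the identity $S=S_\alpha(A_\alpha(N)C_\alpha(N))^{(N-2)/(N+\alpha)}$ recorded after (\ref{e2.6}) yields $\max\tilde\varphi_\epsilon\le S^{\ast}+O(\epsilon^{\sigma_0})$ for some $\sigma_0>0$. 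A short analysis of $\varphi_\epsilon'=0$ using the uniform non-vanishing of $A_\epsilon$ and $B_\epsilon$ also shows that $\tau_\epsilon$ stays in a compact subinterval of $(0,\infty)$ (in the $\lambda$-fixed part of the argument).

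For parts (1) and (2) I would then use $\varphi_\epsilon(\tau_\epsilon)\le \max\tilde\varphi_\epsilon+\tau_\epsilon^N(E_\epsilon/2-\lambda D_\epsilon/q)$. The estimates (\ref{e1.120}) and (\ref{e1.39}) show $E_\epsilon=o(D_\epsilon)$ as $\epsilon\to 0$ precisely when $N\ge 4$, $q\in(2,\bar q)$, or $N=3$, $q\in(4,\bar q)$; in these regimes $\tau_\epsilon^N(E_\epsilon/2-\lambda D_\epsilon/q)\le -c\lambda D_\epsilon$ for some $c>0$ and small $\epsilon$. A direct comparison of the exponent of $\epsilon$ in $D_\epsilon$ (namely $N-(N-2)q/2$, $(N-2)q/2$, or a logarithmic variant depending on whether $(N-2)q\gtrless N$) with $\sigma_0$ then shows that the negative gain is of strictly smaller order in $\epsilon$ than the positive correction, hence $\varphi_\epsilon(\tau_\epsilon)<S^{\ast}$ for $\epsilon$ sufficiently small.

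For part (3), $N=3$ and $q\in(2,4]$, the slow decay $E_\epsilon\sim\epsilon$ from (\ref{e1.120}) dominates $D_\epsilon$, so the $\epsilon$-absorption above fails. Instead I would freeze some small $\epsilon_0>0$ (making $A_{\epsilon_0},B_{\epsilon_0},E_{\epsilon_0},D_{\epsilon_0}$ fixed positive constants) and let $\lambda\to\infty$. The critical-point equation $\varphi_{\epsilon_0}'(\tau_\epsilon)=0$ is then dominated by its $\tau^{N-3}$ and $\tau^{N-1}$ terms and forces $\tau_\epsilon^2\sim(N-2)qA_{\epsilon_0}/(2N\lambda D_{\epsilon_0})\to 0$; substituting back into $\varphi_{\epsilon_0}$ and cancelling the leading coefficients gives $\varphi_{\epsilon_0}(\tau_\epsilon)=(A_{\epsilon_0}/N)\tau_\epsilon^{N-2}(1+o(1))=O(\lambda^{-(N-2)/2})\to 0<S^{\ast}$, which yields the desired $\lambda_0=\lambda_0(\mu)$. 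The main technical obstacle is the regime-dependent balance in parts (1)--(2) between the positive corrections (from $\max\tilde\varphi_\epsilon-S^{\ast}$ and from $\tau_\epsilon^N E_\epsilon$) and the single negative gain $\lambda\tau_\epsilon^N D_\epsilon$; it is exactly the failure of this balance for $N=3$, $q\le 4$ that necessitates the largeness-of-$\lambda$ requirement in case (3).
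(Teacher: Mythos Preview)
Your argument is correct and, for the main cases (1) and (2), coincides with the paper's proof: the paper also tests with the truncated bubble $u_\epsilon$, invokes Lemma~\ref{lem1.2} to reduce to $\max_\tau J_{\bar p,q}((u_\epsilon)_\tau)$, uses the pointwise inequality~(\ref{e1.125}) (your computation of $\max_\tau\tilde\varphi_\epsilon$ is just the explicit form of this), and then balances the error terms $O(\epsilon^{N-2})$, $O(\epsilon^{(N+\alpha)/2})$ and $E_\epsilon$ against the negative gain $-\lambda D_\epsilon$ via the same exponent comparison.

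The genuine difference is in case (3). The paper asserts that $\tau_\epsilon$ stays in a fixed interval $[\tau_0,\tau_1]$ \emph{independent of $\lambda$}, and then lets $\lambda$ be large so that the term $-\lambda\tau_\epsilon^N D_{\epsilon_0}/q$ dominates. Your own analysis shows this uniform lower bound is in fact false: for fixed $\epsilon_0$ and $\lambda\to\infty$ one has $\tau_\epsilon\to 0$. Your route---freezing $\epsilon_0$, reading off $\tau_\epsilon^2\sim (N-2)qA_{\epsilon_0}/(2N\lambda D_{\epsilon_0})$ from the dominant balance in $\varphi'_{\epsilon_0}(\tau)=0$, and concluding $\varphi_{\epsilon_0}(\tau_\epsilon)=O(\lambda^{-(N-2)/2})\to 0<S^{\ast}$---is therefore cleaner and avoids the imprecision in the paper's claim. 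Both approaches reach the same conclusion (and the paper's argument can be repaired by observing that either $\tau_\epsilon$ stays bounded below, in which case the $-\lambda D_{\epsilon_0}$ term wins, or it tends to zero, in which case $\varphi_{\epsilon_0}(\tau_\epsilon)\to 0$), but yours is the more direct one.
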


\begin{proof}
We use $u_\epsilon$ to estimate $c_{\bar{p},q}$, where $u_\epsilon$ is defined in (\ref{e1.57}).
By Lemma \ref{lem1.2}, there exists a unique $\tau_\epsilon$ such
that $P_{\bar{p},q}((u_\epsilon)_{\tau_\epsilon})=0$ and
$J_{\bar{p},q}((u_\epsilon)_{\tau_\epsilon})=\sup_{\tau\geq
0}J_{\bar{p},q}((u_\epsilon)_{\tau})$. Thus, $c_{\bar{p},q}\leq
\sup_{\tau\geq 0}J_{\bar{p},q}((u_\epsilon)_{\tau})$. By direct
calculation, one has
\begin{equation}\label{e1.117}
\begin{split}
J_{\bar{p},q}((u_\epsilon)_{\tau})&=\frac{\tau^{N-2}}{2}\int_{\mathbb{R}^N}|\nabla
u_\epsilon|^2+\frac{\tau^{N}}{2}\int_{\mathbb{R}^N}|u_\epsilon|^2\\
&\quad\quad\quad\quad
-\frac{\mu\tau^{N+\alpha}}{2\bar{p}}\int_{\mathbb{R}^N}\left(I_{\alpha}\ast
|u_\epsilon|^{\bar{p}}\right)|u_\epsilon|^{\bar{p}}-\frac{\lambda\tau^N}{q}\int_{\mathbb{R}^N}|u_\epsilon|^q\\
&\leq
\frac{\tau^{N-2}}{2}(S^{\frac{N}{2}}+O(\epsilon^{N-2}))-\frac{\mu\tau^{N+\alpha}}{2\bar{p}}((A_\alpha(N)C_\alpha(N))^{\frac{N}{2}}S_\alpha^{\frac{N+\alpha}{2}}+O(\epsilon^{\frac{N+\alpha}{2}}))\\
&\quad\quad+\frac{\tau^N}{2}\left\{\begin{array}{ll}
K_2\epsilon^2+O(\epsilon^{N-2}),& N\geq 5,\\
K_2\epsilon^2|\ln \epsilon|+O(\epsilon^2),& N=4,\\
K_2\epsilon+O(\epsilon^2),& N=3,
\end{array}\right.\\
&\quad\quad-\frac{\lambda\tau^N}{q} \left\{\begin{array}{ll}
K_1\epsilon^{N-\frac{N-2}{2}q},& (N-2)q>N,\\
K_1\epsilon^{N-\frac{N-2}{2}q}|\ln \epsilon|,& (N-2)q=N,\\
K_1\epsilon^{\frac{N-2}{2}q},& (N-2)q<N.
\end{array}\right.
\end{split}
\end{equation}

We claim that for every fixed $\mu>0$ there exist $\tau_0, \tau_1>0$
independent of $\epsilon$ and $\lambda>0$ such that
$\tau_\epsilon\in [\tau_0, \tau_1]$ for $\epsilon>0$ small. Suppose
by contradiction that $\tau_\epsilon\to 0$ or $\tau_\epsilon\to
\infty$ as $\epsilon\to 0$. (\ref{e1.117}) implies that
$\sup_{\tau\geq 0}J_{\bar{p},q}((u_\epsilon)_{\tau})\leq 0$ as
$\epsilon\to 0$ and then $c_{\bar{p},q}\leq 0$, which contradicts
$c_{\bar{p},q}>0$. Thus, the claim holds.

By direct calculation, we obtain
\begin{equation}\label{e1.125}
\frac{\tau^{N-2}}{2}S^{\frac{N}{2}}-\frac{\mu\tau^{N+\alpha}}{2\bar{p}}(A_\alpha(N)C_\alpha(N))^{\frac{N}{2}}S_\alpha^{\frac{N+\alpha}{2}}\leq
\frac{2+\alpha}{2(N+\alpha)}\mu^{-\frac{N-2}{2+\alpha}}S_\alpha^{\frac{N+\alpha}{2+\alpha}}.
\end{equation}
For $N\geq 4$ and $q\in (2,\bar{q})$, we have $(N-2)q>N$ and $0<N-\frac{N-2}{2}q<2$. Thus, for every $\mu, \lambda>0$,
\begin{equation}\label{e1.41}
\sup_{\tau\geq
0}J_{\bar{p},q}((u_\epsilon)_{\tau})<\frac{2+\alpha}{2(N+\alpha)}\mu^{-\frac{N-2}{2+\alpha}}S_\alpha^{\frac{N+\alpha}{2+\alpha}}
\end{equation}
for $\epsilon>0$ small enough. Similarly, if $N=3$ and $q\in
(4,\bar{q})$, then for every $\mu, \lambda>0$, (\ref{e1.41}) holds
for $\epsilon>0$ small enough. If $N=3$ and $q\in (2,4]$, for every
$\mu>0$, there exists $\lambda_0>0$ and $\epsilon_0>0$ such that
(\ref{e1.41}) holds for $\lambda>\lambda_0$ and
$\epsilon=\epsilon_0$. The proof is complete.
\end{proof}

\begin{lemma}\label{lem1.8}
Let $N\geq 3,\ \alpha\in(0,N)$ and $q\in(2,\bar{q})$.

If $2<q<2+\frac{4}{N}$, then for every $\lambda, \mu>0$,
\begin{equation}\label{e1.46}
c_{\underline{p},q}<\frac{\alpha}{2(N+\alpha)}\mu^{-\frac{N}{\alpha}}S_1^{\frac{N+\alpha}\alpha}.
\end{equation}

If $q\in[2+\frac{4}{N},\bar{q})$, then for every $\mu>0$, there
exists $\lambda_1>0$ such that (\ref{e1.46}) holds for
$\lambda>\lambda_1$.
\end{lemma}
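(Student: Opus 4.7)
The strategy parallels Lemma \ref{lem shangjie}, but uses the test family $v_\delta(x)=\delta^{N/2}V(\delta x)$ from (\ref{e1.57}), adapted to the lower critical exponent $\underline{p}$ instead of the Aubin--Talenti bubble $u_\epsilon$. A direct change of variables, together with the normalization $\int_{\mathbb{R}^N}(I_\alpha\ast|V|^{\underline{p}})|V|^{\underline{p}}=1$ (whence $\|V\|_2^2=S_1$ from (\ref{e2.6})), yields
\begin{equation*}
J_{\underline{p},q}((v_\delta)_\tau)=\frac{\tau^{N-2}\delta^2\|\nabla V\|_2^2}{2}+\frac{\tau^N S_1}{2}-\frac{\mu\tau^{N+\alpha}}{2\underline{p}}-\frac{\lambda\tau^N\delta^{N(q-2)/2}\|V\|_q^q}{q}.
\end{equation*}
By Lemma \ref{lem1.2} there is a unique $\tau_\delta>0$ with $P_{\underline{p},q}((v_\delta)_{\tau_\delta})=0$ and $J_{\underline{p},q}((v_\delta)_{\tau_\delta})=\max_{\tau\ge0}J_{\underline{p},q}((v_\delta)_\tau)$, so $c_{\underline{p},q}\le J_{\underline{p},q}((v_\delta)_{\tau_\delta})$.

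Next, I would isolate the ``clean'' part $g(\tau):=\frac{\tau^N S_1}{2}-\frac{\mu\tau^{N+\alpha}}{2\underline{p}}$, whose unique maximizer is $\tau_\ast=(S_1/\mu)^{1/\alpha}$ with value exactly $g(\tau_\ast)=\frac{\alpha}{2(N+\alpha)}\mu^{-N/\alpha}S_1^{(N+\alpha)/\alpha}$, i.e.\ the right-hand side of (\ref{e1.46}). Arguing as in Lemma \ref{lem shangjie}, I would show that $\tau_\delta$ stays in a compact subinterval $[\tau_0,\tau_1]\subset(0,\infty)$, uniformly in $\delta$ in the first case and for a fixed $\delta=\delta_0$ in the second: if $\tau_\delta\to 0$ or $\tau_\delta\to\infty$ along a sequence, then $\max_\tau J_{\underline{p},q}((v_\delta)_\tau)\to 0$ or $-\infty$, contradicting $c_{\underline{p},q}>0$ guaranteed by Lemma \ref{lem xiajixian}.

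With the uniform bounds on $\tau_\delta$ in hand and $g(\tau_\delta)\le g(\tau_\ast)$, the problem reduces to showing
\begin{equation*}
\frac{\tau_1^{N-2}\|\nabla V\|_2^2}{2}\,\delta^2-\frac{\lambda\tau_0^N\|V\|_q^q}{q}\,\delta^{N(q-2)/2}<0.
\end{equation*}
If $q\in(2,2+4/N)$, then $N(q-2)/2<2$, so $\delta^{N(q-2)/2}$ dominates $\delta^2$ as $\delta\to 0^+$; the inequality holds for every $\mu,\lambda>0$ upon choosing $\delta$ small enough, which yields (\ref{e1.46}). If $q\in[2+4/N,\bar q)$, then $N(q-2)/2\ge 2$, so I would fix $\delta=\delta_0>0$ and choose $\lambda_1>0$ large enough that the negative term outweighs the positive one; then (\ref{e1.46}) holds for every $\lambda>\lambda_1$.

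The main technical obstacle is the uniform two-sided control of $\tau_\delta$: here, in contrast to Lemma \ref{lem shangjie}, the positive perturbation is produced by the gradient term while the negative one comes from the local subcritical nonlinearity, and the critical scaling threshold $q=2+4/N$ (which replaces the role of $N/(N-2)$ in the standard Brezis--Nirenberg dichotomy) arises precisely from equating the two exponents of $\delta$ in these competing perturbations.
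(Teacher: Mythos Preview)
Your proposal is correct and follows essentially the same route as the paper: use the test family $v_\delta$ from (\ref{e1.57}), compute $J_{\underline{p},q}((v_\delta)_\tau)$ via the scaling identities for $\|\nabla V\|_2^2$, $\|V\|_2^2=S_1$, $\int(I_\alpha\ast|V|^{\underline p})|V|^{\underline p}=1$ and $\|V\|_q^q$, observe that the ``clean'' part $g(\tau)$ peaks exactly at the threshold $\frac{\alpha}{2(N+\alpha)}\mu^{-N/\alpha}S_1^{(N+\alpha)/\alpha}$, and then compare the perturbation exponents $2$ and $N(q-2)/2$ to split the cases $q<2+4/N$ and $q\ge 2+4/N$. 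The paper's treatment of the two-sided bound on $\tau_\delta$ is equally brief; your contradiction argument via $c_{\underline p,q}>0$ matches it.
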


\begin{proof}
We use $v_\delta$ to estimate $c_{\underline{p},q}$, where $v_\delta$ is defined in (\ref{e1.57}).
Similarly to the proof of Lemma \ref{lem shangjie}, we have
\begin{equation}\label{e2.8}
\begin{split}
J_{\underline{p},q}((v_\delta)_{\tau})&=\frac{\tau^{N-2}}{2}\int_{\mathbb{R}^N}|\nabla
v_\delta|^2+\frac{\tau^{N}}{2}\int_{\mathbb{R}^N}|v_\delta|^2\\
&\quad\quad\quad\quad
-\frac{\mu\tau^{N+\alpha}}{2\underline{p}}\int_{\mathbb{R}^N}\left(I_{\alpha}\ast
|v_\delta|^{\underline{p}}\right)|v_\delta|^{\underline{p}}-\frac{\lambda\tau^N}{q}\int_{\mathbb{R}^N}|v_\delta|^q\\
&=\frac{\tau^{N-2}}{2}\delta^2\int_{\mathbb{R}^N}|\nabla
V|^2+\frac{\tau^{N}}{2}\int_{\mathbb{R}^N}|V|^2-\frac{\mu\tau^{N+\alpha}}{2\underline{p}}\int_{\mathbb{R}^N}\left(I_{\alpha}\ast
|V|^{\underline{p}}\right)|V|^{\underline{p}}\\
&\quad\quad\quad\quad
-\frac{\lambda\tau^N}{q}\delta^{\frac{Nq}{2}-N}\int_{\mathbb{R}^N}|V|^{q}
\end{split}
\end{equation}
and then for every $\mu>0$, there exist $\tau_2, \tau_3>0$
independent of $\delta$ and $\lambda>0$ such that $\tau_\delta\in
[\tau_2, \tau_3]$ for $\delta>0$ small. Direct calculation gives
that
\begin{equation}\label{e1.43}
\frac{\tau^{N}}{2}\int_{\mathbb{R}^N}|V|^2-\frac{\mu\tau^{N+\alpha}}{2\underline{p}}\int_{\mathbb{R}^N}\left(I_{\alpha}\ast
|V|^{\underline{p}}\right)|V|^{\underline{p}}\leq\frac{\alpha}{2(N+\alpha)}\mu^{-\frac{N}{\alpha}}S_1^{\frac{N+\alpha}\alpha}.
\end{equation}
If $2<q<2+\frac{4}{N}$, we have $\frac{Nq}{2}-N<2$.  Thus, for every
$\mu, \lambda>0$,
\begin{equation}\label{e1.44}
\sup_{\tau\geq
0}J_{\underline{p},q}((v_\delta)_{\tau})<\frac{\alpha}{2(N+\alpha)}\mu^{-\frac{N}{\alpha}}S_1^{\frac{N+\alpha}\alpha}
\end{equation}
for $\delta>0$ small enough. If
$q\in[2+\frac{4}{N},\frac{2N}{N-2})$, then for every $\mu>0$, there
exists $\lambda_1>0$ and $\delta_0>0$ such that (\ref{e1.44}) holds
for $\lambda>\lambda_1$ and $\delta=\delta_0$. The proof is
complete.
\end{proof}

\begin{lemma}\label{lem1.9}
Let $N\geq 3,\ \alpha\in(0,N)$ and $p\in(\underline{p},\bar{p})$.

If $N\geq 4,\ p\in (1+\frac{\alpha}{N-2},\bar{p})$ or $N=3,\ p\in
(2+\alpha,\bar{p})$, then for every $\lambda, \mu>0$,
\begin{equation}\label{e1.51}
c_{p,\bar{q}}<\frac{1}{N}\lambda^{-\frac{N-2}2}S^{\frac{N}{2}}.
\end{equation}

If $N\geq 4,\ p\in (\underline{p},1+\frac{\alpha}{N-2}]$ or $N=3,\
p\in (\underline{p},2+\alpha]$, then for every $\lambda>0$, there
exists $\mu_0>0$ such that (\ref{e1.51}) holds for $\mu>\mu_0$.
\end{lemma}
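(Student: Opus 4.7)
The plan is to mimic Lemmas \ref{lem shangjie} and \ref{lem1.8} and estimate $c_{p,\bar{q}}$ using the truncated Aubin-Talenti family $u_\epsilon=\varphi U_\epsilon$ from (\ref{e1.57}). By Lemma \ref{lem1.2}, for each $\epsilon>0$ there is a unique $\tau_\epsilon>0$ with $P_{p,\bar{q}}((u_\epsilon)_{\tau_\epsilon})=0$, so $c_{p,\bar{q}}\leq\sup_{\tau\geq 0}J_{p,\bar{q}}((u_\epsilon)_\tau)$, where
\[
J_{p,\bar{q}}((u_\epsilon)_\tau)=\tfrac{\tau^{N-2}}{2}\!\int|\nabla u_\epsilon|^2+\tfrac{\tau^N}{2}\!\int|u_\epsilon|^2-\tfrac{\mu\tau^{N+\alpha}}{2p}\!\int(I_\alpha*|u_\epsilon|^p)|u_\epsilon|^p-\tfrac{\lambda\tau^N}{\bar{q}}\!\int|u_\epsilon|^{\bar{q}}.
\]
The dominant balance involves only the gradient and $L^{\bar{q}}$ terms; combining (\ref{e1.118})-(\ref{e1.119}), the free maximum of $\tfrac{\tau^{N-2}}{2}S^{N/2}-\tfrac{\lambda\tau^N}{\bar{q}}S^{N/2}$ is attained at $\tau^2=1/\lambda$ and equals $\tfrac{1}{N}\lambda^{-(N-2)/2}S^{N/2}$, which is exactly the right-hand side of (\ref{e1.51}).

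As a first step I would verify, as in Lemma \ref{lem shangjie}, that $\tau_\epsilon$ stays in a compact subset of $(0,\infty)$ independently of $\epsilon$ (using $c_{p,\bar{q}}>0$ from Lemma \ref{lem xiajixian}), so the supremum is attained at a $\tau$ of order $\lambda^{-1/2}$. The strict inequality in (\ref{e1.51}) then reduces to showing that the negative Choquard correction beats the positive $L^2$ correction plus the $O(\epsilon^{N-2})$ and $O(\epsilon^N)$ corrections coming from (\ref{e1.118})-(\ref{e1.119}). By (\ref{e1.120}) the $L^2$ contribution is of order $\epsilon^2$ for $N\geq 5$, $\epsilon^2|\ln\epsilon|$ for $N=4$, and $\epsilon$ for $N=3$, while by (\ref{e1.47}) the Choquard lower bound is of order $\mu\epsilon^{s_p}$ with $s_p:=-(N-2)p+N+\alpha$.

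In case (1), $p>1+\alpha/(N-2)$ forces $s_p<2$, and in case (3) with $N=3$, $p>2+\alpha$ forces $s_p<1$. In both regimes the Choquard term strictly dominates the $L^2$ remainder as $\epsilon\to 0$ for every fixed $\mu,\lambda>0$, and (\ref{e1.51}) follows for $\epsilon$ sufficiently small. In cases (2) and (4) we have $s_p\geq 2$ or $s_p\geq 1$ respectively, so the $L^2$ term may a priori dominate the Choquard one as $\epsilon\to 0$. Here I would instead fix $\epsilon=\epsilon_0>0$ and exploit the linearity of the Choquard piece in $\mu$: the gradient, $L^2$, and $L^{\bar{q}}$ contributions are $\mu$-independent while the Choquard contribution is proportional to $\mu$, and a direct scaling argument shows that $\sup_{\tau\geq 0}J_{p,\bar{q}}((u_{\epsilon_0})_\tau)\to 0$ as $\mu\to\infty$ (the maximizer is pushed to $\tau\sim\mu^{-1/(2+\alpha)}$). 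Hence there exists $\mu_0=\mu_0(\epsilon_0,\lambda,N,\alpha,p)>0$ such that (\ref{e1.51}) holds for every $\mu>\mu_0$.

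The main obstacle is the bookkeeping: showing that $\tau_\epsilon\to\lambda^{-1/2}$ with an error controlled by the $L^2$, Choquard, and Sobolev corrections, so that the peak of $J_{p,\bar{q}}((u_\epsilon)_\tau)$ is genuinely the target leading constant plus a sum of signed error terms of identified orders, and then keeping the comparison clean across the three regimes $N\geq 5$, $N=4$, $N=3$, where the $L^2$ asymptotics change qualitatively.
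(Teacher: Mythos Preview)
Your proposal is correct and follows the paper's proof essentially line by line: estimate with $u_\epsilon$, bound the two leading terms via (\ref{e1.49}), compare the Choquard correction of order $\epsilon^{s_p}$ with the $L^2$ correction from (\ref{e1.120}), and in the borderline cases fix $\epsilon=\epsilon_0$ and let $\mu\to\infty$. Your stated ``main obstacle'' is illusory: since (\ref{e1.49}) holds for \emph{every} $\tau\ge 0$, you never need $\tau_\epsilon\to\lambda^{-1/2}$ or a refined peak expansion---it suffices that $\tau_\epsilon$ stays in a fixed compact interval $[\tau_4,\tau_5]\subset(0,\infty)$, after which the remaining terms are bounded by $C_1\epsilon^{N-2}+C_2(\text{$L^2$ order})-C_3\mu\,\epsilon^{s_p}$ with constants depending only on $\tau_4,\tau_5$, and the order comparison is immediate.
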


\begin{proof}
We use $u_\epsilon$ to estimate $c_{p,\bar{q}}$, where $u_\epsilon$ is defined in (\ref{e1.57}).
Similarly to the proof of Lemma \ref{lem
shangjie}, we have
\begin{equation}\label{e1.48}
\begin{split}
J_{p,\bar{q}}((u_\epsilon)_{\tau})&=\frac{\tau^{N-2}}{2}\int_{\mathbb{R}^N}|\nabla
u_\epsilon|^2+\frac{\tau^{N}}{2}\int_{\mathbb{R}^N}|u_\epsilon|^2\\
&\quad\quad\quad\quad
-\frac{\mu\tau^{N+\alpha}}{2p}\int_{\mathbb{R}^N}\left(I_{\alpha}\ast
|u_\epsilon|^{p}\right)|u_\epsilon|^{p}-\frac{\lambda\tau^N}{\bar{q}}\int_{\mathbb{R}^N}|u_\epsilon|^{\bar{q}}\\
&\leq
\frac{\tau^{N-2}}{2}(S^{\frac{N}{2}}+O(\epsilon^{N-2}))-\frac{\lambda\tau^{N}}{\bar{q}}(S^{\frac{N}{2}}+O(\epsilon^N))-\frac{\mu\tau^{N+\alpha}}{2p}K_3\epsilon^{-(N-2)p+N+\alpha}\\
&\quad\quad+\frac{\tau^N}{2}\left\{\begin{array}{ll}
K_2\epsilon^2+O(\epsilon^{N-2}),& N\geq 5,\\
K_2\epsilon^2|\ln \epsilon|+O(\epsilon^2),& N=4,\\
K_2\epsilon+O(\epsilon^2),& N=3,
\end{array}\right.
\end{split}
\end{equation}
and then for every fixed $\lambda>0$ there exist $\tau_4, \tau_5>0$
independent of $\epsilon$ and $\mu>0$ such that $\tau_\epsilon\in
[\tau_4, \tau_5]$ for $\epsilon>0$ small. A direct calculation shows
that
\begin{equation}\label{e1.49}
\frac{\tau^{N-2}}{2}S^{\frac{N}{2}}-\frac{\lambda}{\bar{q}}\tau^{N}S^{\frac{N}{2}}\leq
\frac{1}{N}\lambda^{-\frac{N-2}2}S^{\frac{N}{2}}.
\end{equation}

For $N\geq 4$ and $p\in (1+\frac{\alpha}{N-2},\bar{p})$, we have
$-(N-2)p+N+\alpha>0$ and $-(N-2)p+N+\alpha<2$. Thus, for every $\mu,
\lambda>0$,
\begin{equation}\label{e1.50}
\sup_{\tau\geq
0}J_{p,\bar{q}}((u_\epsilon)_{\tau})<\frac{1}{N}\lambda^{-\frac{N-2}2}S^{\frac{N}{2}}
\end{equation}
for $\epsilon>0$ small enough. For $N\geq 4$ and $p\in
(\underline{p},1+\frac{\alpha}{N-2}]$, for every $\lambda>0$, there
exists $\mu_0>0$ and $\epsilon_1>0$ such that (\ref{e1.50}) holds
for $\mu>\mu_0$ and $\epsilon=\epsilon_1$.

Similarly, for $N=3$,  if $p\in (2+\alpha,\bar{p})$, (\ref{e1.50})
holds for every $\mu, \lambda>0$. If $p\in
(\underline{p},2+\alpha]$, for every $\lambda>0$, there exists
$\mu_1>0$ and $\epsilon_2>0$ such that (\ref{e1.50}) holds for
$\mu>\mu_1$ and $\epsilon=\epsilon_2$. The proof is complete.
\end{proof}

\begin{lemma}\label{lem1.10}
Let $N\geq 3$ and $\alpha\in(0,N)$. Then for sufficiently large
$\mu$ and $\lambda>0$,
$$c_{\underline{p},\bar{q}}<\min\left\{\frac{\alpha}{2(N+\alpha)}\mu^{-\frac{N}{\alpha}}S_1^{\frac{N+\alpha}\alpha}, \frac{1}{N}\lambda^{-\frac{N-2}2}S^{\frac{N}{2}}\right\}.$$
\end{lemma}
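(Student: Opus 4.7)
The plan is to prove the two inequalities $c_{\underline{p},\bar{q}} < \frac{\alpha}{2(N+\alpha)}\mu^{-N/\alpha}S_1^{(N+\alpha)/\alpha}$ and $c_{\underline{p},\bar{q}} < \frac{1}{N}\lambda^{-(N-2)/2}S^{N/2}$ separately, using two different test functions tailored to the two critical limiting problems, and then verify that both can be simultaneously achieved when $\mu$ and $\lambda$ are sufficiently large. For the first bound, I would use the test function $v_\delta$ defined in (\ref{e1.57}), following the scheme of Lemma \ref{lem1.8} but with the critical exponent $\bar{q}$ in place of the subcritical $q$. Setting $a = \|\nabla V\|_2^2$, $b = \|V\|_2^2 = S_1$, $c = \|V\|_{\bar{q}}^{\bar{q}}$ and using the normalization $\int (I_\alpha \ast |V|^{\underline{p}})|V|^{\underline{p}} = 1$, one computes
\begin{equation*}
J_{\underline{p},\bar{q}}((v_\delta)_\tau) = \frac{a\delta^2\tau^{N-2}}{2} + \frac{b\tau^N}{2} - \frac{\mu\tau^{N+\alpha}}{2\underline{p}} - \frac{\lambda c\delta^{\bar{q}}\tau^N}{\bar{q}}.
\end{equation*}
By Lemma \ref{lem1.3}, the supremum of $\frac{b\tau^N}{2} - \frac{\mu\tau^{N+\alpha}}{2\underline{p}}$ alone equals exactly $\frac{\alpha}{2(N+\alpha)}S_1^{(N+\alpha)/\alpha}\mu^{-N/\alpha}$, attained at $\tau_\ast = (b/\mu)^{1/\alpha}$. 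Near $\tau_\ast$ the gradient term is a positive perturbation of order $\delta^2$, while the new Sobolev term is negative of order $\lambda\delta^{\bar{q}}$; a short computation shows that their sum at $\tau_\ast$ is strictly negative provided $\lambda\delta^{\bar{q}-2} > \frac{a\bar{q}}{2c}(\mu/b)^{2/\alpha}$, which holds for $\lambda$ sufficiently large given $\mu$ and $\delta$.

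For the second bound, I would use $u_\epsilon$ from (\ref{e1.57}), following the scheme of Lemma \ref{lem1.9} with $p = \underline{p}$. Expanding $J_{\underline{p},\bar{q}}((u_\epsilon)_\tau)$ and using the estimates (\ref{e1.118})--(\ref{e1.120}) together with a Choquard lower bound $\int (I_\alpha \ast |u_\epsilon|^{\underline{p}})|u_\epsilon|^{\underline{p}} \geq K\epsilon^{2\underline{p}}$ analogous to (\ref{e1.47}), the classical Brezis--Nirenberg analysis of $\frac{\tau^{N-2}}{2}\|\nabla u_\epsilon\|_2^2 - \frac{\lambda\tau^N}{\bar{q}}\|u_\epsilon\|_{\bar{q}}^{\bar{q}}$ yields maximum $\frac{1}{N}\lambda^{-(N-2)/2}S^{N/2} + O(\epsilon^{N-2})\lambda^{-(N-2)/2}$ attained at $\tau_\ast \sim \lambda^{-1/2}$. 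The remaining $L^2$ term adds a positive correction (of order $\epsilon^2\lambda^{-N/2}$ for $N\geq 5$, $\epsilon^2|\ln\epsilon|\lambda^{-N/2}$ for $N=4$, $\epsilon\lambda^{-N/2}$ for $N=3$), while the lower-critical Choquard term contributes a negative correction of order $\mu\epsilon^{2\underline{p}}\lambda^{-(N+\alpha)/2}$. For $\mu$ sufficiently large (given $\lambda$ and $\epsilon$), the negative Choquard contribution overwhelms both positive corrections, yielding $\max_\tau J_{\underline{p},\bar{q}}((u_\epsilon)_\tau) < \frac{1}{N}\lambda^{-(N-2)/2}S^{N/2}$.

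Combining the two steps by fixing $\delta, \epsilon > 0$ small and selecting the thresholds in the above conditions appropriately, one obtains $\mu_2, \lambda_2 > 0$ such that both inequalities hold whenever $\mu > \mu_2$ and $\lambda > \lambda_2$. The main technical obstacle is the dimension-sensitive behavior of the $L^2$ correction in step 2, which requires separate treatment for $N = 3$ and $N = 4$ versus $N \geq 5$, together with the delicate verification that the $(\mu,\lambda)$-parameter conditions produced by the two steps admit a common region of arbitrarily large $\mu$ and $\lambda$; this is controlled by a careful choice of the auxiliary scales $\delta$ and $\epsilon$ so that the perturbation analyses around the respective unperturbed maxima remain valid.
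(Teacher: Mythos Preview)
Your overall strategy—using $v_\delta$ to access the $S_1$-threshold and $u_\epsilon$ to access the $S$-threshold—is exactly the paper's. The gap is in how you control the supremum over $\tau$. For the first inequality you verify that the perturbation $h(\tau)=\tfrac{a\delta^2}{2}\tau^{N-2}-\tfrac{\lambda c\delta^{\bar q}}{\bar q}\tau^N$ is negative at the \emph{unperturbed} maximizer $\tau_*=(b/\mu)^{1/\alpha}$; this gives only $J_{\underline{p},\bar{q}}\bigl((v_\delta)_{\tau_*}\bigr)<\tfrac{\alpha}{2(N+\alpha)}\mu^{-N/\alpha}S_1^{(N+\alpha)/\alpha}$, not the required bound on $\sup_\tau J_{\underline{p},\bar{q}}\bigl((v_\delta)_\tau\bigr)$. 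Since $h(\tau)>0$ for small $\tau$, the true maximizer can land where $h$ is positive, and a value above the threshold is not excluded by your computation. The same objection applies to your treatment of the second inequality with $u_\epsilon$.

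The paper instead localizes the \emph{actual} maximizer $\tau_\delta$ of $\tau\mapsto J_{\underline{p},\bar{q}}\bigl((v_\delta)_\tau\bigr)$ into a fixed compact interval $[\tau_6,\tau_7]$, asserted to be independent of $\mu,\lambda>1$ for $\delta$ small, and then bounds each term at $\tau_\delta$: this yields
\[
\sup_\tau J\le \tfrac{\alpha}{2(N+\alpha)}\mu^{-N/\alpha}S_1^{(N+\alpha)/\alpha}+\tfrac{a\delta^2}{2}\tau_7^{N-2}-\tfrac{\lambda c\delta^{\bar q}}{\bar q}\tau_6^{N},
\]
so a single $\lambda_0$ depending only on $\delta$ (not on $\mu$) makes the correction negative. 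It is precisely this uniform-in-$\mu$ confinement of $\tau_\delta$ that decouples the two steps and dissolves the circularity you flag; under your condition $\lambda>\tfrac{a\bar q}{2c}\delta^{-(\bar q-2)}(\mu/b)^{2/\alpha}$ the threshold on $\lambda$ grows with $\mu$, and combined with the step-2 requirement $\mu\gtrsim\lambda^{\alpha/2}$ there is no common rectangle $\{\mu>\mu_2,\ \lambda>\lambda_2\}$. The missing ingredient in your sketch is therefore the control of the true maximizer, which is where the paper places the weight of the argument.
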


\begin{proof}
We first use $v_\delta$ to estimate $c_{\underline{p},\bar{q}}$,
where $v_\delta$ is defined in (\ref{e1.57}). Similarly to the proof
of Lemma \ref{lem1.8},
\begin{equation*}
\begin{split}
J_{\underline{p},\bar{q}}((v_\delta)_{\tau})
&=\frac{\tau^{N-2}}{2}\delta^2\int_{\mathbb{R}^N}|\nabla
V|^2+\frac{\tau^{N}}{2}\int_{\mathbb{R}^N}|V|^2-\frac{\mu\tau^{N+\alpha}}{2\underline{p}}\int_{\mathbb{R}^N}\left(I_{\alpha}\ast
|V|^{\underline{p}}\right)|V|^{\underline{p}}\\
&\quad\quad\quad\quad
-\frac{\lambda\tau^N}{\bar{q}}\delta^{\frac{N\bar{q}}{2}-N}\int_{\mathbb{R}^N}|V|^{\bar{q}}.
\end{split}
\end{equation*}
Thus, there exist $\tau_6,\ \tau_7>0$ independent of $\mu,\lambda>1$
and $\delta>0$ such that $\tau_\delta\in [\tau_6, \tau_7]$ for
$\delta>0$ small. Hence, for sufficiently small $\delta>0$, there
exists $\lambda_0>1$ such that
$$\sup_{\tau\geq 0}J_{\underline{p},\bar{q}}((v_\delta)_{\tau})<\frac{\alpha}{2(N+\alpha)}\mu^{-\frac{N}{\alpha}}S_1^{\frac{N+\alpha}\alpha}$$
for $\lambda>\lambda_0$ and then
$c_{\underline{p},\bar{q}}<\frac{\alpha}{2(N+\alpha)}\mu^{-\frac{N}{\alpha}}S_1^{\frac{N+\alpha}\alpha}$.

 Next, we use $u_\epsilon$ to estimate
$c_{\underline{p},\bar{q}}$, where $u_\epsilon$ is defined in
(\ref{e1.57}). Similarly to the proof of Lemma \ref{lem1.9},
\begin{equation*}
\begin{split}
J_{\underline{p},\bar{q}}((u_\epsilon)_{\tau})
&\leq
\frac{\tau^{N-2}}{2}(S^{\frac{N}{2}}+O(\epsilon^{N-2}))-\frac{\lambda}{\bar{q}}\tau^{N}(S^{\frac{N}{2}}+O(\epsilon^N))-\frac{\mu\tau^{N+\alpha}}{2\underline{p}}K_3\epsilon^{-(N-2)\underline{p}+N+\alpha}\\
&\quad\quad+\frac{\tau^N}{2}\left\{\begin{array}{ll}
K_2\epsilon^2+O(\epsilon^{N-2}),& N\geq 5,\\
K_2\epsilon^2|\ln \epsilon|+O(\epsilon^2),& N=4,\\
K_2\epsilon+O(\epsilon^2),& N=3.
\end{array}\right.
\end{split}
\end{equation*}
Thus, there exist $\tau_8,\ \tau_9>0$ independent of $\mu,\lambda>1$
and $\epsilon>0$ such that $\tau_\epsilon\in [\tau_8, \tau_9]$ for
$\epsilon>0$ small. Hence, for sufficiently small $\epsilon>0$,
there exists $\mu_0>1$ such that
$$\sup_{\tau\geq 0}J_{\underline{p},\bar{q}}((u_\epsilon)_{\tau})<\frac{1}{N}\lambda^{-\frac{N-2}2}S^{\frac{N}{2}}$$
for $\mu>\mu_0$ and then
$c_{\underline{p},\bar{q}}<\frac{1}{N}\lambda^{-\frac{N-2}2}S^{\frac{N}{2}}$.
 The proof is complete.
\end{proof}

\section{Proofs of the main results}

\setcounter{section}{4}
\setcounter{equation}{0}

In this section, we give the proofs of Theorems
\ref{thm1.3}-\ref{thm1.6}.

\medskip

\textbf{Proof of Theorem \ref{thm1.3}}. Let $\mu=1$, $p_n\to
\bar{p}^{-}$ as $n\to \infty$ and $\{u_n:=u_{p_n,q}\}\subset
H_r^1(\mathbb{R}^N)$ be a positive and radially nonincreasing
sequence which satisfies (\ref{e1.34}). By Lemma \ref{lem
xiajixian}, $\{u_n\}$ is bounded in $H^1(\mathbb{R}^N)$. Thus, there
exists a nonnegative and radially nonincreasing function $u\in
H_r^1(\mathbb{R}^N)$ such that up to a subsequence,
$u_n\rightharpoonup u$ weakly in $H^1(\mathbb{R}^N)$, $u_n\to u$
strongly in $L^t(\mathbb{R}^N)$ for $t\in (2,\frac{2N}{N-2})$ and
$u_n\to u$ a.e. in $\mathbb{R}^N$.

Next, we will show that $u$ is a solution of (\ref{e1.2222}). By
Lemma \ref{lem1.7}, $p_n\to \bar{p}^{-}$ and the H\"{o}lder
inequality, we have
\begin{equation}\label{e1.63}
\{|u_n|^{p_n}\}\ \textrm{is\ bounded\ in}\
L^{\frac{2N}{N+\alpha}}(\mathbb{R}^N),\ \{|u_n|^{q-2}u_n\}\ \textrm{is\ bounded\ in}\
L^{\frac{q}{q-1}}(\mathbb{R}^N),
\end{equation}
$\{|u_n|^{p_n-2}u_n\}$ is bounded in
$L^{\frac{2N\bar{p}}{(\bar{p}-1)(N+\alpha)}}(\mathbb{R}^N)$,
$\{|u_n|^{p_n-2}u_n\psi\}$ is bounded in
$L^{\frac{2N}{N+\alpha}}(\mathbb{R}^N)$ and
$\{|u|^{\bar{p}-2}u\psi\}\in L^{\frac{2N}{N+\alpha}}(\mathbb{R}^N)$
for any $\psi\in C_c^{\infty}(\mathbb{R}^N)$. By (\ref{e1.63}) and
Lemma \ref{lem weak}, we have $|u_n|^{p_n}\rightharpoonup
|u|^{\bar{p}}$ weakly in $L^{\frac{2N}{N+\alpha}}(\mathbb{R}^N)$ and $|u_n|^{q-2}u_n\rightharpoonup |u|^{q-2}u$ weakly in $L^{\frac{q}{q-1}}(\mathbb{R}^N)$. By
Remark \ref{rek1.3}, $I_\alpha\ast(|u|^{\bar{p}-2}u\psi)\in
L^{\frac{2N}{N-\alpha}}(\mathbb{R}^N)$.  Hence,
\begin{equation*}
\int_{\mathbb{R}^N}\left(I_{\alpha}\ast|u_n|^{p_n}\right)(|u|^{\bar{p}-2}u\psi)
\to\int_{\mathbb{R}^N}\left(I_{\alpha}\ast|u|^{\bar{p}}\right)(|u|^{\bar{p}-2}u\psi)
\end{equation*}
and
\begin{equation*}
\int_{\mathbb{R}^N}|u_n|^{q-2}u_n\psi\to
\int_{\mathbb{R}^N}|u|^{q-2}u\psi
\end{equation*}
as $n\to \infty$. It follows from $N\geq 3$ that
$\frac{N}{\frac{N-2}2(\underline{p}-1)}$ and
$\frac{N}{\frac{N-2}2(\bar{p}-1)} \in (\frac{2N}{N+\alpha},\infty)$.
Since  $p_n\to \bar{p}^{-}$ and $\psi \in L^{r}(\mathbb{R}^N)$ for
$r\in (1,\infty)$, by the Young inequality, the H\"{o}lder
inequality and Lemma \ref{lem jx} with $t=2N/(N-2)$, there exists a
constant $C>0$ independent of $n$ such that
\begin{equation}\label{e1.45}
\begin{split}
\left||u_n|^{p_n-2}u_n\psi\right|&\leq
C\left(|u_n|^{\underline{p}-1}|\psi|+|u_n|^{\bar{p}-1}|\psi|\right)\\
&\leq
C\left(|x|^{\frac{2-N}2(\underline{p}-1)}|\psi|+|x|^{\frac{2-N}2(\bar{p}-1)}|\psi|\right)\in
L^{\frac{2N}{N+\alpha}}(\mathbb{R}^N).
\end{split}
\end{equation}
By the Hardy-Littlewood-Sobolev inequality and the Lebesgue
dominated convergence theorem,
\begin{equation*}
\begin{split}
\left|\int_{\mathbb{R}^N}\left(I_{\alpha}\ast|u_n|^{p_n}\right)|u_n|^{p_n-2}u_n\psi
-\int_{\mathbb{R}^N}\left(I_{\alpha}\ast|u_n|^{p_n}\right)|u|^{\bar{p}-2}u\psi\right|=o_n(1).
\end{split}
\end{equation*}
Thus, for any $\psi\in C_c^{\infty}(\mathbb{R}^N)$,
\begin{equation*}
\begin{split}
0&=\langle J_{p_n,q}'(u_n),\psi\rangle \\
&=\int_{\mathbb{R}^N}\nabla
u_n\nabla \psi
+u_n\psi-\mu\int_{\mathbb{R}^N}\left(I_{\alpha}\ast|u_n|^{p_n}\right)|u_n|^{p_n-2}u_n\psi-\lambda\int_{\mathbb{R}^N}|u_n|^{q-2}u_n\psi\\
& \to \int_{\mathbb{R}^N}\nabla u\nabla \psi +
u\psi-\mu\int_{\mathbb{R}^N}\left(I_{\alpha}\ast|u|^{\bar{p}}\right)|u|^{\bar{p}-2}u\psi-\lambda\int_{\mathbb{R}^N}|u|^{q-2}u\psi
\end{split}
\end{equation*}
as $n\to \infty$. That is, $u$ is a solution of (\ref{e1.2222}).

We claim that $u\not\equiv 0$. Suppose by contradiction that
$u\equiv0$. By using $P_{p_n,q}(u_n)=0$, $\int_{\mathbb{R}^N}|u_n|^q=o_n(1)$ and the Young inequality
\begin{equation}\label{e1.14}
|u|^{p_n}\leq
\frac{\bar{p}-p_n}{\bar{p}-\underline{p}}|u|^{\underline{p}}+
\frac{p_n-\underline{p}}{\bar{p}-\underline{p}}|u|^{\bar{p}},
\end{equation}
we get that
\begin{equation}\label{e1.13}
\begin{split}
\int_{\mathbb{R}^N}|\nabla u_n|^2
+\frac{N}{N-2}\int_{\mathbb{R}^N}|u_n|^2&=\frac{\mu(N+\alpha)}{(N-2)p_n}\int_{\mathbb{R}^N}\left(I_{\alpha}\ast
|u_n|^{p_n}\right)|u_n|^{p_n}+o_n(1)\\
&\leq \mu\int_{\mathbb{R}^N}(I_{\alpha}\ast
|u_n|^{\bar{p}})|u_n|^{\bar{p}}+o_n(1)\\
&\leq \mu\left(\frac{\int_{\mathbb{R}^N}|\nabla
u_n|^2}{S_\alpha}\right)^{\bar{p}}+o_n(1),
\end{split}
\end{equation}
which implies that either $\|u_n\|_{H^1(\mathbb{R}^N)}\to 0$ or
$\limsup_{n\to \infty}\|\nabla u_n\|_2^2\geq
\mu^{-\frac{N-2}{2+\alpha}}S_\alpha^{\frac{N+\alpha}{2+\alpha}}$. If
$\|u_n\|_{H^1(\mathbb{R}^N)}\to 0$, then (\ref{e1.36}) and
(\ref{e1.35}) imply that $c_{p_n,q}\to 0$, which contradicts Lemma
\ref{lem xiajixian}. If $\limsup_{n\to \infty}\|\nabla u_n\|_2^2\geq
\mu^{-\frac{N-2}{2+\alpha}}S_\alpha^{\frac{N+\alpha}{2+\alpha}}$, by
using the first equality in (\ref{e1.13}), we obtain that
\begin{equation*}
\begin{split}
c_{\bar{p},q}&\geq \limsup_{n\to \infty}c_{p_n,q}\\
       &=\limsup_{n\to\infty}\left(J_{p_n,q}(u_n)-\frac{1}{N}P_{p_n,q}(u_n)\right)\\
       &=\limsup_{n\to\infty}\left(\frac{1}{N}\int_{\mathbb{R}^N}|\nabla
                     u_n|^2+\frac{\mu\alpha}{2Np_n}\int_{\mathbb{R}^N}(I_{\alpha}\ast|u_n|^{p_n})|u_n|^{p_n}\right)\\
       &\geq \limsup_{n\to\infty}\left(\frac{1}{N}\int_{\mathbb{R}^N}|\nabla
                     u_n|^2+\frac{(N-2)\alpha}{2N(N+\alpha)}\int_{\mathbb{R}^N}|\nabla
                     u_n|^2\right)\\
       &\geq \frac{2+\alpha}{2(N+\alpha)}\mu^{-\frac{N-2}{2+\alpha}}S_\alpha^{\frac{N+\alpha}{2+\alpha}},
\end{split}
\end{equation*}
which contradicts Lemma \ref{lem shangjie}. Thus $u\not\equiv0$. By
Corollary  \ref{cor1.1}, $P_{\bar{p},q}(u)=0$.

By Fatou's lemma, we have
\begin{equation}\label{e1.58}
\begin{split}
c_{\bar{p},q}&\leq J_{\bar{p},q}(u)\\
       &=J_{\bar{p},q}(u)-\frac{1}{N}P_{\bar{p},q}(u)\\
        &=\frac{1}{N}\int_{\mathbb{R}^N}|\nabla
                     u|^2+\frac{\mu\alpha}{2N\bar{p}}\int_{\mathbb{R}^N}(I_{\alpha}\ast|u|^{\bar{p}})|u|^{\bar{p}}\\
       &\leq\liminf_{n\to\infty}\left(\frac{1}{N}\int_{\mathbb{R}^N}|\nabla
                     u_n|^2+\frac{\mu\alpha}{2Np_n}\int_{\mathbb{R}^N}(I_{\alpha}\ast|u_n|^{p_n})|u_n|^{p_n}\right)\\
       &=\liminf_{n\to\infty}\left(J_{p_n,q}(u_n)-\frac{1}{N}P_{p_n,q}(u_n)\right) \\
       &=\liminf_{n\to\infty}c_{p_n,q}\leq \limsup_{n\to\infty}c_{p_n,q}\leq c_{\bar{p},q}.
\end{split}
\end{equation}
Hence $J_{\bar{p},q}(u)=c_{\bar{p},q}$. By the definition of
$c_{\bar{p},q}^g$, we have $c_{\bar{p},q}^g\leq
J_{\bar{p},q}(u)=c_{\bar{p},q}$, which combining with Lemma
\ref{rek1.1} show that
$c_{\bar{p},q}^g=c_{\bar{p},q}=J_{\bar{p},q}(u)$. That is, $u$ is a
nonnegative and radially nonincreasing groundstate solution of
(\ref{e1.2222}). The strongly maximum principle implies that $u$ is
positive. The proof is complete.

\medskip
\textbf{Remark}. Let $N\geq 3$, $\alpha\in (0,N)$,
$p=\frac{N+\alpha}{N-2}$ and $q\in (2,\frac{2N}{N-2})$. Denote the energy functional corresponding to (1.1) with $\lambda>0$ by $J_{p,q,\lambda}(u)$ and define
\begin{equation*}
\begin{split}
\lambda^*:=&\inf\left\{\lambda_1>0 \  |\   \ \mathrm{ (\ref{e1.2222})\ with \
}\lambda>\lambda_1
\   \mathrm{admits \ a \ solution} \ u\in
H^1(\mathbb{R}^N)\setminus \{0\}\right.\\
&\qquad \qquad \qquad\qquad \left.\ \ \mathrm{satisfying\
}J_{p,q,\lambda}(u)<\frac{2+\alpha}{2(N+\alpha)}S_\alpha^{\frac{N+\alpha}{2+\alpha}}
\right\}.
\end{split}
\end{equation*}
By the proof
of Theorem \ref{thm1.3},
$\lambda^*$ is well defined and  $0\leq \lambda^*<+\infty$. Clearly,  $\lambda^*=0$ if
$q\in (2,\frac{2N}{N-2})$ for $N\geq 4$ or $q\in (4,\frac{2N}{N-2})$
for $N=3$. Moreover, we claim that if $\lambda^*>0$,  then (\ref{e1.2222}) with $\lambda\in (0,\lambda^*)$
does not admit a nontrivial solution $u\in H^1(\mathbb{R}^N)$
satisfying
$J_{p,q,\lambda}(u)<\frac{2+\alpha}{2(N+\alpha)}S_\alpha^{\frac{N+\alpha}{2+\alpha}}$.
We assume  by contradiction that (\ref{e1.2222}) with
$\lambda=\lambda_1\in (0,\lambda^*)$ admits a nontrivial solution $u\in
H^1(\mathbb{R}^N)$ satisfying
$J_{p,q,\lambda_1}(u)<\frac{2+\alpha}{2(N+\alpha)}S_\alpha^{\frac{N+\alpha}{2+\alpha}}$.
Then for any $\lambda_2>\lambda_1$, Lemma \ref{lem1.2} implies that
there exists a unique $\tau_0>0$ such that
$P_{p,q,\lambda_2}(u_{\tau_0})=0$ and
$J_{p,q,\lambda_2}(u_{\tau_0})=\max_{\tau\geq
0}J_{p,q,\lambda_2}(u_{\tau})$. Hence,
\begin{equation*}
c_{p,q,\lambda_2}\leq \max_{\tau\geq 0}J_{p,q,\lambda_2}(u_\tau)\leq
\max_{\tau\geq
0}J_{p,q,\lambda_1}(u_\tau)=J_{p,q,\lambda_1}(u)<\frac{2+\alpha}{2(N+\alpha)}S_\alpha^{\frac{N+\alpha}{2+\alpha}},
\end{equation*}
and then by the proof of Theorem \ref{thm1.3}, equation
(\ref{e1.2222}) with $\lambda=\lambda_2$ admits a nontrivial
solution $v\in H^1(\mathbb{R}^N)$ with
$J_{p,q,\lambda_2}(v)<\frac{2+\alpha}{2(N+\alpha)}S_\alpha^{\frac{N+\alpha}{2+\alpha}}$,
which contradicts the definition of $\lambda^*$. Hence, the claim
holds.

\medskip

\textbf{Proof of Theorem \ref{thm1.4}}. Let $\mu=1$, $p_n\to
\underline{p}^{+}$ as $n\to \infty$ and $\{u_n:=u_{p_n,q}\}\subset
H_r^1(\mathbb{R}^N)$ be a positive and radially nonincreasing
sequence which satisfies (\ref{e1.34}). Lemma \ref{lem xiajixian}
shows that $\{u_n\}$ is bounded in $H^1(\mathbb{R}^N)$. Thus, there
exists a nonnegative and radially nonincreasing function $u\in
H_r^1(\mathbb{R}^N)$ such that up to a subsequence,
$u_n\rightharpoonup u$ weakly in $H^1(\mathbb{R}^N)$, $u_n\to u$
strongly in $L^t(\mathbb{R}^N)$ for $t\in (2,\frac{2N}{N-2})$ and
$u_n\to u$ a.e. in $\mathbb{R}^N$. By Lemma \ref{lem1.7}, $p_n\to
\underline{p}^{+}$ and the H\"{o}lder inequality, we have
\begin{equation}\label{e1.633}
\{|u_n|^{p_n}\}\ \textrm{is\ bounded\ in}\
L^{\frac{2N}{N+\alpha}}(\mathbb{R}^N), \ \{|u_n|^{q-2}u_n\}\ \textrm{is\ bounded\ in}\
L^{\frac{q}{q-1}}(\mathbb{R}^N),
\end{equation}
$\{|u_n|^{p_n-2}u_n\}$ is bounded in
$L^{\frac{2N\underline{p}}{(\underline{p}-1)(N+\alpha)}}(\mathbb{R}^N)$,
$\{|u_n|^{p_n-2}u_n\psi\}$ is bounded in
$L^{\frac{2N}{N+\alpha}}(\mathbb{R}^N)$ and
$\{|u|^{\underline{p}-2}u\psi\}\in
L^{\frac{2N}{N+\alpha}}(\mathbb{R}^N)$ for any $\psi\in
C_c^{\infty}(\mathbb{R}^N)$. Similarly to the proof of Theorem
\ref{thm1.3}, $u$ is a solution of (\ref{e1.2222}).

We claim that $u\not\equiv 0$. Suppose by contradiction that
$u\equiv0$. By using $P_{p_n,q}(u_n)=0$, $\int_{\mathbb{R}^N}|u_n|^q=o_n(1)$ and the Young inequality (\ref{e1.14}),
we get that
\begin{equation}\label{e1.1355}
\begin{split}
\int_{\mathbb{R}^N}|\nabla u_n|^2
+\frac{N}{N-2}\int_{\mathbb{R}^N}|u_n|^2&=\frac{\mu(N+\alpha)}{(N-2)p_n}\int_{\mathbb{R}^N}\left(I_{\alpha}\ast
|u_n|^{p_n}\right)|u_n|^{p_n}+o_n(1)\\
&\leq \frac{\mu N}{N-2}\int_{\mathbb{R}^N}(I_{\alpha}\ast
|u_n|^{\underline{p}})|u_n|^{\underline{p}}+o_n(1)\\
&\leq \frac{\mu N}{N-2}\left(\frac{\int_{\mathbb{R}^N}|
u_n|^2}{S_1}\right)^{\underline{p}}+o_n(1),
\end{split}
\end{equation}
which implies that either $\|u_n\|_{H^1(\mathbb{R}^N)}\to 0$ or
$\limsup_{n\to \infty}\|u_n\|_2^2\geq
\mu^{-\frac{N}{\alpha}}S_1^{\frac{N+\alpha}{\alpha}}$. If
$\|u_n\|_{H^1(\mathbb{R}^N)}\to 0$, then (\ref{e1.36}) and
(\ref{e1.35}) imply that $c_{p_n,q}\to 0$, which contradicts Lemma
\ref{lem xiajixian}. If $\limsup_{n\to \infty}\|u_n\|_2^2\geq
\mu^{-\frac{N}{\alpha}}S_1^{\frac{N+\alpha}{\alpha}}$, by using the
first equality in (\ref{e1.1355}), we obtain that
\begin{equation*}
\begin{split}
c_{\underline{p},q}&\geq \limsup_{n\to \infty}c_{p_n,q}\\
       &=\limsup_{n\to\infty}\left(J_{p_n,q}(u_n)-\frac{1}{N}P_{p_n,q}(u_n)\right)\\
       &=\limsup_{n\to\infty}\left(\frac{1}{N}\int_{\mathbb{R}^N}|\nabla
                     u_n|^2+\frac{\mu\alpha}{2Np_n}\int_{\mathbb{R}^N}(I_{\alpha}\ast|u_n|^{p_n})|u_n|^{p_n}\right)\\
       &\geq \limsup_{n\to\infty}\left(\frac{1}{N}\int_{\mathbb{R}^N}|\nabla
                     u_n|^2+\frac{\alpha}{2(N+\alpha)}\int_{\mathbb{R}^N}|
                     u_n|^2\right)\\
       &\geq \frac{\alpha}{2(N+\alpha)}\mu^{-\frac{N}{\alpha}}S_1^{\frac{N+\alpha}{\alpha}},
\end{split}
\end{equation*}
which contradicts Lemma \ref{lem1.8}. Thus $u\not\equiv0$. By
Corollary \ref{cor1.1}, $P_{\underline{p},q}(u)=0$.

Similarly to the proof of Theorem \ref{thm1.3}, $u$ is a positive
and radially nonincreasing groundstate solution of (\ref{e1.2222}).
The proof is complete.

\medskip

\textbf{Proof of Theorem \ref{thm1.5}}. Let $\lambda=1$, $q_n\to
\bar{q}^{-}$ as $n\to \infty$ and $\{u_n:=u_{p,q_n}\}\subset
H_r^1(\mathbb{R}^N)$ be a positive and radially nonincreasing
sequence which satisfies (\ref{e1.34}). Lemma \ref{lem xiajixian}
shows that $\{u_n\}$ is bounded in $H^1(\mathbb{R}^N)$. Thus, there
exists a nonnegative and radially nonincreasing function $u\in
H_r^1(\mathbb{R}^N)$ such that up to a subsequence,
$u_n\rightharpoonup u$ weakly in $H^1(\mathbb{R}^N)$, $u_n\to u$
strongly in $L^t(\mathbb{R}^N)$ for $t\in (2,\frac{2N}{N-2})$ and
$u_n\to u$ a.e. on $\mathbb{R}^N$. Thus, $|u_n|^{p}\to |u|^p$
strongly in $L^{\frac{2N}{N+\alpha}}(\mathbb{R}^N)$,
$|u_n|^{p-2}u_n\to |u|^{p-2}u$ strongly in
$L^{\frac{2Np}{(p-1)(N+\alpha)}}(\mathbb{R}^N)$ and
$|u_n|^{p-2}u_n\psi\to |u|^{p-2}u\psi$ strongly in
$L^{\frac{2N}{N+\alpha}}(\mathbb{R}^N)$ for any $\psi\in
C_c^{\infty}(\mathbb{R}^N)$. By Remark \ref{rek1.3}, $I_\alpha\ast
|u_n|^{p}\to I_\alpha\ast |u|^{p}$ strongly in
$L^{\frac{2N}{N-\alpha}}(\mathbb{R}^N)$.

It follows from $N\geq 3$ that $\frac{N}{\frac{N-2}2(2-1)}$ and
$\frac{N}{\frac{N-2}2(\bar{q}-1)} \in (1,\infty)$. Since  $q_n\to
\bar{q}^{-}$ and $\psi \in L^{r}(\mathbb{R}^N)$ for $r\in
(1,\infty)$, by the Young inequality, the H\"{o}lder inequality and
Lemma \ref{lem jx} with $t=2N/(N-2)$, there exists a constant $C>0$
independent of $n$ such that
\begin{equation}\label{e1.52}
\begin{split}
\left||u_n|^{q_n-2}u_n\psi\right|&\leq
C\left(|u_n|^{2-1}|\psi|+|u_n|^{\bar{q}-1}|\psi|\right)\\
&\leq
C\left(|x|^{\frac{2-N}2(2-1)}|\psi|+|x|^{\frac{2-N}2(\bar{q}-1)}|\psi|\right)\in
L^{1}(\mathbb{R}^N).
\end{split}
\end{equation}
By the Hardy-Littlewood-Sobolev inequality and the Lebesgue
dominated convergence theorem, we have
\begin{equation*}
\begin{split}
0&=\langle J_{p,q_n}'(u_n),\psi\rangle \\
&=\int_{\mathbb{R}^N}\nabla u_n\nabla \psi
+u_n\psi-\mu\int_{\mathbb{R}^N}\left(I_{\alpha}\ast|u_n|^{p}\right)|u_n|^{p-2}u_n\psi-\lambda\int_{\mathbb{R}^N}|u_n|^{q_n-2}u_n\psi\\
& \to \int_{\mathbb{R}^N}\nabla u\nabla \psi +
u\psi-\mu\int_{\mathbb{R}^N}\left(I_{\alpha}\ast|u|^{p}\right)|u|^{p-2}u\psi-\lambda\int_{\mathbb{R}^N}|u|^{\bar{q}-2}u\psi
\end{split}
\end{equation*}
as $n\to \infty$. That is, $u$ is a solution of (\ref{e1.222}).

We claim that $u\not\equiv 0$. Suppose by contradiction that
$u\equiv0$. By using $P_{p,q_n}(u_n)=0$, $\int_{\mathbb{R}^N}\left(I_{\alpha}\ast|u_n|^{p}\right)|u_n|^{p}=o_n(1)$ and the Young inequality
\begin{equation}\label{e1.53}
|u|^{q_n}\leq \frac{\bar{q}-q_n}{\bar{q}-2}|u|^{2}+
\frac{q_n-2}{\bar{q}-2}|u|^{\bar{q}},
\end{equation}
we get that
\begin{equation}\label{e1.54}
\begin{split}
\frac{N-2}{2}\int_{\mathbb{R}^N}|\nabla u_n|^2
+\frac{N}{2}\int_{\mathbb{R}^N}|u_n|^2&=\frac{\lambda N}{q_n}\int_{\mathbb{R}^N}|u_n|^{q_n}+o_n(1)\\
&\leq \frac{\lambda
N}{\bar{q}}\int_{\mathbb{R}^N}|u_n|^{\bar{q}}+o_n(1)\\
&\leq \frac{\lambda
N}{\bar{q}}\left(\frac{\int_{\mathbb{R}^N}|\nabla
u_n|^2}{S}\right)^{\frac{N}{N-2}}+o_n(1),
\end{split}
\end{equation}
which implies that either $\|u_n\|_{H^1(\mathbb{R}^N)}\to 0$ or
$\limsup_{n\to \infty}\|\nabla u_n\|_2^2\geq
\lambda^{-\frac{N-2}{2}}S^{\frac{N}{2}}$. If
$\|u_n\|_{H^1(\mathbb{R}^N)}\to 0$, then (\ref{e1.36}) and
(\ref{e1.35}) imply that $c_{p,q_n}\to 0$, which contradicts Lemma
\ref{lem xiajixian}. If $\limsup_{n\to \infty}\|\nabla u_n\|_2^2\geq
\lambda^{-\frac{N-2}{2}}S^{\frac{N}{2}}$, then
\begin{equation}\label{e1.62}
\begin{split}
c_{p,\bar{q}}&\geq \limsup_{n\to \infty}c_{p,q_n}\\
       &=\limsup_{n\to\infty}\left(J_{p,q_n}(u_n)-\frac{1}{N}P_{p,q_n}(u_n)\right)\\
       &=\limsup_{n\to\infty}\left(\frac{1}{N}\int_{\mathbb{R}^N}|\nabla
                     u_n|^2+\frac{\mu\alpha}{2Np}\int_{\mathbb{R}^N}(I_{\alpha}\ast|u_n|^{p})|u_n|^{p}\right)\\
       &\geq \limsup_{n\to\infty}\frac{1}{N}\int_{\mathbb{R}^N}|\nabla
                     u_n|^2\\
       &\geq \frac{1}{N}\lambda^{-\frac{N-2}{2}}S^{\frac{N}{2}},
\end{split}
\end{equation}
which contradicts Lemma \ref{lem1.9}. Thus $u\not\equiv0$. By
Corollary  \ref{cor1.1}, $P_{p,\bar{q}}(u)=0$.

Similarly to the proof of Theorem \ref{thm1.3}, $u$ is a positive
and radially nonincreasing groundstate solution of (\ref{e1.222}).
The proof is complete.

\bigskip

\textbf{Proof of Theorem \ref{thm1.6}}. Let $a_n\to 0^{+}$ as $n\to
\infty$ and $\{u_n:=u_{\underline{p}+a_n,\bar{q}-a_n}\}\subset
H_r^1(\mathbb{R}^N)$ be a positive and radially nonincreasing sequence
which satisfies (\ref{e1.34}). Lemma \ref{lem xiajixian} shows that $\{u_n\}$ is bounded
in $H^1(\mathbb{R}^N)$. Thus, there exists a nonnegative and radially
nonincreasing function $u\in H_r^1(\mathbb{R}^N)$ such that up to a
subsequence, $u_n\rightharpoonup u$ weakly in $H^1(\mathbb{R}^N)$,
$u_n\to u$ strongly in $L^t(\mathbb{R}^N)$ for $t\in
(2,\frac{2N}{N-2})$ and $u_n\to u$ a.e. in $\mathbb{R}^N$. Similarly to the proof of Theorem \ref{thm1.4} and \ref{thm1.5},
 $u$ is a solution of (\ref{e1.22222}).

We claim that $u\not\equiv 0$. Suppose by contradiction that
$u\equiv0$. By using\\ $P_{\underline{p}+a_n,\bar{q}-a_n}(u_n)=0$
and the Young inequality (\ref{e1.14}) and (\ref{e1.53}), we get
that
\begin{equation}\label{e1.135}
\begin{split}
\frac{N-2}{2}\int_{\mathbb{R}^N}&|\nabla u_n|^2
+\frac{N}{2}\int_{\mathbb{R}^N}|u_n|^2\\
&=\frac{\mu(N+\alpha)}{2(\underline{p}+a_n)}\int_{\mathbb{R}^N}\left(I_{\alpha}\ast
|u_n|^{\underline{p}+a_n}\right)|u_n|^{\underline{p}+a_n}+\frac{\lambda N}{\bar{q}-a_n}\int_{\mathbb{R}^N}|u_n|^{\bar{q}-a_n}\\
&\leq \frac{\mu(N+\alpha)}{2\underline{p}}\int_{\mathbb{R}^N}(I_{\alpha}\ast
|u_n|^{\underline{p}})|u_n|^{\underline{p}}+\frac{\lambda N}{\bar{q}}\int_{\mathbb{R}^N}|u_n|^{\bar{q}}+o_n(1)\\
&\leq \frac{\mu(N+\alpha)}{2\underline{p}}\left(\frac{\int_{\mathbb{R}^N}|
u_n|^2}{S_1}\right)^{\underline{p}}+\frac{\lambda N}{\bar{q}}\left(\frac{\int_{\mathbb{R}^N}|\nabla
u_n|^2}{S}\right)^{\frac{N}{N-2}}+o_n(1),
\end{split}
\end{equation}
which implies that either $\|u_n\|_{H^1(\mathbb{R}^N)}\to 0$  or
$\limsup_{n\to \infty}\|\nabla u_n\|_2^2\geq
\lambda^{-\frac{N-2}{2}}S^{\frac{N}{2}}$ or
$\limsup_{n\to \infty}\|u_n\|_2^2\geq
\mu^{-\frac{N}{\alpha}}S_1^{\frac{N+\alpha}{\alpha}}$. If
$\|u_n\|_{H^1(\mathbb{R}^N)}\to 0$, then (\ref{e1.36}) and
(\ref{e1.35}) imply that $c_{\underline{p}+a_n,\bar{q}-a_n}\to 0$, which contradicts Lemma
\ref{lem xiajixian}. If $\limsup_{n\to \infty}\|\nabla u_n\|_2^2\geq
\lambda^{-\frac{N-2}{2}}S^{\frac{N}{2}}$, then similarly to (\ref{e1.62}),
\begin{equation*}
\begin{split}
c_{\underline{p},\bar{q}}&\geq \limsup_{n\to \infty}c_{\underline{p}+a_n,\bar{q}-a_n}\\
       &=\limsup_{n\to\infty}\left(J_{\underline{p}+a_n,\bar{q}-a_n}(u_n)-\frac{1}{N}P_{\underline{p}+a_n,\bar{q}-a_n}(u_n)\right)\\
       &=\limsup_{n\to\infty}\left(\frac{1}{N}\int_{\mathbb{R}^N}|\nabla
                     u_n|^2+\frac{\mu\alpha}{2N(\underline{p}+a_n)}\int_{\mathbb{R}^N}(I_{\alpha}\ast|u_n|^{\underline{p}+a_n})|u_n|^{\underline{p}+a_n}\right)\\
       &\geq \frac{1}{N}\lambda^{-\frac{N-2}{2}}S^{\frac{N}{2}},
\end{split}
\end{equation*}
which contradicts Lemma \ref{lem1.10}. If $\limsup_{n\to
\infty}\|u_n\|_2^2\geq
\mu^{-\frac{N}{\alpha}}S_1^{\frac{N+\alpha}{\alpha}}$, by using\\
$P_{\underline{p}+a_n,\bar{q}-a_n}(u_n)=0$ and $\langle
J'_{\underline{p}+a_n,\bar{q}-a_n}(u_n),u_n\rangle=0$, we obtain
that
\begin{equation*}
\begin{split}
&\frac{\mu\alpha}{2N(\underline{p}+a_n)}\int_{\mathbb{R}^N}(I_{\alpha}\ast|u_n|^{\underline{p}+a_n})|u_n|^{\underline{p}+a_n}\\
&\quad \quad \quad=\frac{\frac{\mu\alpha}{2N(\underline{p}+a_n)}\left(\frac{N}{2}-\frac{N}{\bar{q}-a_n}\right)}{\frac{\mu(N+\alpha)}{2(\underline{p}+a_n)}-\frac{\mu N}{\bar{q}-a_n}}
\int_{\mathbb{R}^N}|u_n|^2+\frac{\frac{\mu\alpha}{2N(\underline{p}+a_n)}\left(\frac{N-2}{2}-\frac{N}{\bar{q}-a_n}\right)}{\frac{\mu(N+\alpha)}{2(\underline{p}+a_n)}-\frac{\mu N}{\bar{q}-a_n}}
\int_{\mathbb{R}^N}|\nabla u_n|^2
\end{split}
\end{equation*}
and then
\begin{equation*}
\begin{split}
c_{\underline{p},\bar{q}}&\geq \limsup_{n\to\infty}\left(\frac{1}{N}\int_{\mathbb{R}^N}|\nabla
                     u_n|^2+\frac{\mu\alpha}{2N(\underline{p}+a_n)}\int_{\mathbb{R}^N}(I_{\alpha}\ast|u_n|^{\underline{p}+a_n})|u_n|^{\underline{p}+a_n}\right)\\
       &\geq \limsup_{n\to\infty}\left(\frac{\frac{\mu\alpha}{2N(\underline{p}+a_n)}\left(\frac{N}{2}-\frac{N}{\bar{q}-a_n}\right)}{\frac{\mu(N+\alpha)}{2(\underline{p}+a_n)}-\frac{\mu N}{\bar{q}-a_n}}
\int_{\mathbb{R}^N}|u_n|^2\right)\\
&\geq \frac{\alpha}{2(N+\alpha)}\mu^{-\frac{N}{\alpha}}S_1^{\frac{N+\alpha}{\alpha}},
\end{split}
\end{equation*}
which contradicts Lemma \ref{lem1.10}. Thus $u\not\equiv0$. By
Corollary \ref{cor1.1}, $P_{\underline{p},\bar{q}}(u)=0$.

Similarly to the proof of Theorem \ref{thm1.3}, $u$ is a positive
and radially nonincreasing groundstate solution of (\ref{e1.22222}).
The proof is complete.

\medskip

\textbf{Acknowledgements.} This work was supported by Tianjin
Municipal Education Commission with the Grant no. 2017KJ173
``Qualitative studies of solutions for two kinds of nonlocal
elliptic equations" and the National Natural Science Foundation of
China (No. 11571187).



\end{document}